\def\blue{\color{blue}}
\def\bg{{\bar{\gamma}}}
\def\bv{{\bar{v}}}
\def\bth{{\bar{\theta}}}
\def\bs{{\bar{\sigma}}}
\def\bu{{\bar{u}}}
\def\tg{{\tilde{\gamma}}}
\def\tv{{\tilde{v}}}
\def\tth{{\tilde{\theta}}}
\def\ts{{\tilde{\sigma}}}
\def\tu{{\tilde{u}}}
\def\dtg{{\dot{\tilde{\gamma}}}}
\def\dtv{{\dot{\tilde{v}}}}
\def\dtth{{\dot{\tilde{\theta}}}}
\def\dts{{\dot{\tilde{\sigma}}}}
\def\dpp{\dot{p}}
\def\dqq{\dot{q}}
\def\drr{\dot{r}}
\def\BO{{{O}}}
\def\R{\mathbb{R}}
\newcommand{\ga}{\alpha}
\newcommand{\tht}{\theta}
\newcommand{\tcb}{}
\newtheorem{theorem}{Theorem}
\newtheorem{lemma}{Lemma}[section]
\newtheorem{proposition}{Proposition}[section]
\newtheorem{definition}{Definition}[section]
\theoremstyle{remark}
\newtheorem{remark}{Remark}[section]
\begin{document}
\title{Localization in adiabatic shear flow \\via geometric theory of singular perturbations}
\author{  Min-Gi Lee \footnotemark[4] \and Theodoros Katsaounis\footnotemark[1] \footnotemark[2]  \footnotemark[3]
\and Athanasios E. Tzavaras\footnotemark[1] \footnotemark[2] \footnotemark[5]} 
\date{}

\maketitle
\renewcommand{\thefootnote}{\fnsymbol{footnote}}

\footnotetext[1]{ Computer, Electrical and Mathematical Sciences \& Engineering Division,
King Abdullah University of Science and Technology (KAUST), Thuwal, Saudi Arabia.}
\footnotetext[2]{Institute of Applied and Computational Mathematics, FORTH, Heraklion, Greece}
\footnotetext[3]{Department of Mathematics and Applied Mathematics, University of Crete, Heraklion, Greece}
\footnotetext[4]{Department of Mathematics, Kyungpook National University, Daegu, Korea}
\footnotetext[5]{Corresponding author : \texttt{athanasios.tzavaras@kaust.edu.sa}}
\renewcommand{\thefootnote}{\arabic{footnote}}

\maketitle


\baselineskip=14pt

\begin{abstract}
We study localization occurring during high speed shear deformations of metals leading to the formation of shear bands.
The localization instability results from the competition between  Hadamard instability (caused by softening response) and the stabilizing effects of strain-rate hardening.  We consider a hyperbolic-parabolic system that expresses the above mechanism and construct
self-similar solutions of localizing type that arise as the outcome of the above competition.
The existence of self-similar solutions is turned, via a series of transformations,
 into a problem of constructing a heteroclinic orbit for an induced dynamical system.
The dynamical system is four dimensional but has a fast-slow structure
with respect to a small parameter capturing the strength of strain-rate hardening.
 Geometric singular perturbation theory is applied to construct the heteroclinic orbit as a transversal intersection of two invariant manifolds in the phase space.
\end{abstract}


\section{Introduction}
Shear bands are narrow zones of intensely localized shear that are formed during the high speed plastic deformations of metals \cite{ZH, clifton_rev_1990,wright_survey_2002}.
They often precede rupture and are one of the striking instances of material instability leading to failure.  Considerable attention has been devoted to the
problem of shear band formation in both the mechanics and the applied mathematics literature, and section \ref{mathmodel} is devoted
to a presentation of the problem and a quick derivation of the hyperbolic-parabolic system
\begin{equation}
\label{intro-system1}
\begin{aligned}
v_t &= \big ( \theta^{-\alpha}\gamma^m v_x^n \big )_{x} , \\
 \gamma_t &= v_x ,\\
 \theta_t &= \theta^{-\alpha}\gamma^m (v_x)^{n+1} .\\
\end{aligned}
\end{equation}
The system describes the plastic shearing deformation of a specimen based on conservation of momentum and energy using
a model in thermoviscoplasticity:
\begin{equation}
\label{consti}
\sigma =\theta^{-\alpha}\gamma^m u^n \, , \qquad \mbox{ where \quad $u := \gamma_t = v_x$ }		 
\end{equation}
Equation \eqref{consti} is viewed as a yield stress or a plastic flow rule, with the parameters $\alpha$, $m$ and $n > 0$ describing respectively the
degree of thermal softening, strain hardening and strain-rate sensistivity.
We refer to section \ref{mathmodel} for a derivation of  \eqref{intro-system1} and a review of earlier work useful in understanding
the localization problem and its relevance to the present study; references \cite{clifton_rev_1990,shawki_shear_1989,wright_survey_2002,KT09}
can be consulted for further information on the mechanical aspects of the model.

The model \eqref{intro-system1} admits a special class of time-dependent solutions describing uniform shear (see \eqref{uss}) and the problem
of shear band formation is initially posed as a problem of stability for the uniform shearing solutions. As these are time-dependent, it leads to
analysis of non-autonomous problems and presents challenges even for linearized stability. We refer to section \ref{mathmodel}
for information on this aspect of the problem. Here, we focus on the regime of linearized instability and pose the problem of
understanding the behavior in the nonlinear regime.  A conjecture on the threshold of instability is offered by the asymptotic analysis in  \cite{KT09},
devising an effective equation that changes type along a threshold from forward to backward parabolic.
 It leads one to expect instability when $-\alpha+m+n$ changes sign from positive to negative value.

\tcb{
It is expedient to reformulate the problem \eqref{intro-system1}, in terms of the variables
$(u, \gamma, \theta)$, as a parabolic system where the diffusion coefficient is controlled by ordinary differential equations
\begin{equation}
\label{intro-system2}
\begin{aligned}
u_t &= \big ( \theta^{-\alpha}\gamma^m u^n \big )_{xx} , \\
 \gamma_t &= u ,\\
 \theta_t &= \theta^{-\alpha}\gamma^m u^{n+1}  \, . \\
\end{aligned}
\end{equation}
The systems \eqref{intro-system1} or \eqref{intro-system2} are  considered for $x \in \R$, $t > 0$.
}
The goal of this work is to construct a class of self-similar solutions for systems \eqref{intro-system1} (or \eqref{intro-system2})
of the form
\begin{equation} \label{intro-sols}
\begin{aligned}
 \gamma (t,x) &= t^a\Gamma\big( x \, t^\lambda \big), & v (t,x) &= t^bV\big( x \, t^\lambda \big), & \theta (t,x) &= t^c\Theta\big( x \, t^\lambda \big),\\
 {\sigma}(t,x) &= t^d\Sigma\big( x \, t^\lambda \big), & u(t,x) &= t^{a-1}U\big( x \, t^\lambda \big),
\end{aligned}
\end{equation}
where $\lambda > 0$ and the parameters $(\alpha, m, n)$ take values in the expected instability regime $-\alpha+m+n<0$.
Usually parabolic systems (such as \eqref{intro-system2}) admit diffusing self similar solutions constant on lines $\xi = \frac{x}{t^\rho}$.
By insisting on $\lambda >0$,
the solutions \eqref{intro-sols} will  propagate information on lines $x t^\lambda = const$ that focus around the origin.
The existence of such solutions explores the invariance of the system \eqref{intro-system1} under rescalings and we look for profiles with
$U(\xi)$, $\Gamma(\xi)$, $\Theta(\xi)$ even functions and $V(\xi)$ odd function.

We further demand that these profiles are localizing. We will call a self-similar function
\begin{equation}
\label{deflocal1}
f(t,x) = t^b F(x t^\lambda) \, , \quad \mbox{with $F(-\xi) = F(\xi)$ and $\lambda > 0$}
\end{equation}
{\it localizing} if it has the asymptotic behavior
\begin{equation}
\label{deflocal2}
F(\xi) = \BO (\xi^p)    \quad \mbox{ as $\xi \to \infty$ }
\end{equation}
and satisfies that $p < 0$ when $b > 0$ while $p > 0$ when $b < 0$. Under this definition, when $f(t,0)$ grows then $f(t,x)$ grows at a slower
rate when $x \ne 0$, while when $f(t,0)$ decays then $f(t,x)$ decays at a slower rate at $x \ne 0$. We will call a self-similar function with an odd-profile
$F(-\xi) = -F(\xi)$ localizing when its derivative $f_x( t,x)$ has the aforementioned behavior.

Applying the {\it ansatz} \eqref{intro-sols} leads
to the system of  singular ordinary differential equations
\begin{equation} \label{intro:ss-odes}
\begin{aligned}
 V'(\xi)&=U(\xi),\\
 \Sigma' (\xi) &=  b V(\xi) + \lambda \xi U(\xi) , \\
 a \Gamma(\xi) + \lambda \xi \Gamma'(\xi) &= U(\xi), \\
 c \Theta(\xi) + \lambda \xi \Theta'(\xi)&=\Sigma(\xi) U(\xi),\\
 \Sigma(\xi) &= \Theta(\xi)^{-\alpha} \Gamma(\xi)^m U(\xi)^n, \\
 \Gamma(0)=\Gamma_0>0, \quad U(0)&=U_0>0, \quad \text{$\xi \in [0,\infty)$}.
\end{aligned}
\end{equation}
This is viewed as a system of singular ordinary differential equations for $(V, \Sigma, \Gamma, \Theta)$ with $U$
defined by inverting \eqref{intro:ss-odes}$_5$.
With the objective to compare these self-similar profiles to the fundamental solution of the heat or porous media equation,
we look for solutions so that $(\Gamma,\Theta)$  and $U$ is even; in turn implying that
$\Sigma$  is even while $V$ is odd (see section \ref{sec:scale} for details).
We impose
\begin{equation}
 V(0)=U'(0)=\Gamma'(0)=\Sigma'(0)=\Theta'(0)=0  \label{intro:bdry0}
\end{equation}
so that the symmetric extensions are smooth self-similar profiles.
The main result of this article is the construction of profiles solving \eqref{intro:ss-odes}, \eqref{intro:bdry0}.
It turns out that the induced self-similar solutions \eqref{intro-sols}
exhibit localizing in space behavior as time evolves, see sections \ref{sec:localization}
and \ref{sec:numerics}.

The idea of constructing self-similar localizing solutions for problems of shear band formation is introduced in \cite{KOT14} for the system
\begin{equation}
\label{explaw}
\begin{aligned}
v_t &=  ( e^{-\alpha \theta} v_x^n )_x
\\
\theta_t &= e^{-\alpha \theta} v_x^{n+1}
\end{aligned}
\end{equation}
modeling a non-Newtonian fluid with temperature dependent viscosity.
Due to special properties of \eqref{explaw}, \tcb{ the construction of self-similar solutions is reduced to finding a heteroclinic orbit
for a planar system of autonomous differential equations, which is achieved through phase space analysis.} A second step is taken in \cite{LT16}
where \eqref{intro-system1} is studied for parameters $\alpha = 0$ and $m < 0$ when the system simplifies to a system of two conservation laws. The problem is reduced
using geometric singular perturbation theory to the flow of a planar dynamical system in a two-dimensional invariant manifold. The present work addresses
the full model  \eqref{intro-system1} and due to the higher dimensionality of the problem a more elaborate version of geometric singular perturbation theory is needed.

The self-similar localizing solutions emerge as the combined outcome of Hadamard instability (that characterizes the system \eqref{intro-system1} for $n=0$ in the regime
$-\alpha + m < 0$) and the regularizing effect of momentum diffusion when $n > 0$.
This feature can be clearly seen in the linearized analysis of uniform shearing solutions for the simplified model \eqref{explaw} which indicates that the
combined effect of the two mechanisms amounts to Turing instability, see \cite{KOT14}. Moreover, existing linearized and nonlinear stability analyses that are available
for special instances of \eqref{intro-system1} and are outlined in section 2 corroborate this point.

The article is organized as follows:
Sections \ref{sec:scale} and  \ref{sec:formulation} deal with the formulation of the problem leading to \eqref{intro:ss-odes}, \eqref{intro:bdry0}.
The system \eqref{intro:ss-odes} is singular (at $\xi=0$) and non-autonomous and it does not fit under a general existence theory.
The singularity can be resolved and \eqref{intro:ss-odes} is desingularized using again the scale-invariance properties.
Furthermore, upon introducing a series of nonlinear transformations,  the construction of profiles for \eqref{intro:ss-odes}, \eqref{intro:bdry0} is accomplished by the
construction of a heteroclinic orbit for the four-dimensional dynamical system for $(p,q,r,s)$
\begin{equation}\label{intro:slow}\tag{S}
 \begin{aligned}
 \dot{p} &=p\Big(\frac{1}{\lambda}(r-a) + 2- \lambda p r -q\Big), \\
 \dot{q} &=q\Big(1 -\lambda p r -q\Big) + b p r,\\
 n\dot{r} &=r\Big(\frac{\alpha-m-n}{\lambda(1+\alpha)}(r-a) + \lambda pr + q +\frac{\alpha}{\lambda}r\big(s- \frac{1+m+n}{1+\alpha}\big) + \frac{n\alpha}{\lambda(1+\alpha)}\Big),\\
 \dot{s} &=s\Big(\frac{\alpha-m-n}{\lambda(1+\alpha)}(r-a) + \lambda pr + q - \frac{1}{\lambda}r\big(s- \frac{1+m+n}{1+\alpha}\big) - \frac{n}{\lambda(1+\alpha)}\Big),
 \end{aligned}
\end{equation}
parametrized by $(\lambda,\alpha,m,n)$. The initial conditions are transmitted to asymptotic conditions for the heteroclinic
as $ \eta(=\log\xi) \to -\infty$ while the behavior as $\eta \to \infty$ will capture the asymptotic behavior of the profiles.

The existence of solutions to \eqref{intro:ss-odes}, \eqref{intro:bdry0} is achieved in sections \ref{sec:scale} - \ref{sec:proof}. Their construction is reduced
to obtaining a heteroclinic orbit for \eqref{intro:slow} with prescribed asymptotic behavior as $\eta \to -\infty$.
\tcb{At the end of section \ref{sec:scale}, the reader will find an outline on how the construction of the profiles is reduced to obtaining a heteroclinic orbit for \eqref{intro:slow}.}
 The existence of a heteroclinic  orbit for \eqref{intro:slow} (with prescribed asymptotic behavior) is obtained in Theorem \ref{thm1} using the geometric theory of singular perturbations \cite{fenichel_persistence_1972,fenichel_asymptotic_1974,fenichel_asymptotic_1977,fenichel_geometric_1979,Jones_1995,KUEHN_2015}, exploiting the smallness of the parameter $n$.
Section \ref{sec:proof} contains the main part of the proof, motivated by the geometric arguments of \cite{Sz1991} and adapted to the present system through somewhat cumbersome
computations detailed in sections \ref{sec:singorb} and \ref{sec:thmproof}. The proof is based on  a more elaborate argument than the simple invariant manifold argument
for obtaining the corresponding result for the simplified model in \cite{LT16}.
In the present case, the finer structure inside the manifold is needed along with the persistence of the unstable and stable manifolds, see section \ref{sec:thmproof}.

The constructed self-similar solutions depend on two parameters $(U_0, \Gamma_0)$ describing the initial nonuniformity; the rate of localization $\lambda$ is
determined from $(U_0, \Gamma_0)$ via \eqref{eq:lambda}. Due to the construction necessities the rate has to obey the bound \eqref{eq:lambda}.
The solutions \eqref{intro-sols} provide an example of instabilty resulting in localization. Their localizing behavior is investigated in section \ref{sec:localization}, see
Proposition \ref{prop:ss} and section \ref{sec:emloc}.
In section \ref{sec:numerics}, the heteroclinic orbit is computed numerically using the standardized continuation software AUTO, \cite{Doedel_1981,DK_1986,DCFKSW_1999},
what leads to graphs of the profiles and the corresponding localizing solutions for various examples of material parameters.

To our knowledge, the localizing self-similar solutions are the first instance of depicting localizing behavior for a sufficiently broad model \eqref{intro-system1}
that embodies the basic  shear band formation mechanism proposed by Zener and Hollomon \cite{ZH} and Clifton \cite{clifton_rev_1990}, and encompasses
all the contributing factors  of thermal softening, strain hardening and strain-rate hardening.
They complement \cite{Tz_1987}, where shear bands are induced by energy supplied via the boundary.
Some of the key predictions of stress-collapse are common, but the present result has the conceptual
advantage to capture the emergence of localization as the combined result
of Hadamard instability with small viscosity effects.
It would be very interesting to study the stability of the solutions that are constructed here; this
appears a challenging problem.



A preliminary report of these results, concerning the case with no strain hardening ($m = 0$), has been presented in the
Proceedings article \cite{KLT_HYP2016}.

\section{Description of the shear band formation problem}
\label{mathmodel}

The formation of shear bands \cite{CCHD,ZH}  is a phenomenon occuring during
high strain-rate plastic deformations of certain steels and other metal alloys. Instead of distributing evenly across the loaded region,
the shear strain concentrates in a narrow band with a concurrent elevation of the temperature in the interior of the band, \cite{ZH,CCHD,HDH}.
Shear bands are often precursors to rupture and their study has attracted considerable
attention including  experimental works  \cite{CCHD,HDH}, mechanical modeling
and  linearized analysis studies  ({\it e.g.} \cite{CDHS,FM,MC,wright_survey_2002} and references therein) and
nonlinear analysis investigations  \cite{DH_1983,Tz_1987,bertsch_effect_1991}.

\subsection{Modeling shear bands}
Shear bands appear and propagate as one dimensional structures (up to interaction times), and
many investigations focus on the study of one-dimensional, simple shear.
A specimen located in the $xy$-plain undergoes shear motion in the $y$-direction. The motion is described by the (plastic) shear strain
$\gamma(t,x)$, the strain rate $u(t,x) = \gamma_t (t,x)$, the velocity $v(t,x)$ in the shear direction, the temperature $\theta(t,x)$ and the shear stress
$\sigma (t,x)$ all defined in $(t,x)\in \mathbb{R}^+ \times \mathbb{R}$. It is described by the equations
\begin{equation} \label{mechmodel}
\begin{aligned}
 \gamma_t &= v_x  \\	
  v_t &= \sigma_x
 \\
 \theta_t &= \kappa \theta_{xx} + \sigma v_x, 	\\
\end{aligned}
\end{equation}
which stand respectively for the kinematic compatibility equation, the balance of momentum and
the balance of energy equation. Here, the elastic effects are neglected and all strain is considered to be plastic, and
a Fourier heat conduction is considered with $\kappa$ the thermal diffusivity.

Under shearing most materials deform in a uniform fashion until they break. By contrast, in high strain-rate deformations of
certain steels, it is observed that nonuniformities develop in the  plastic strain and  localize in a narrow region, called shear band; see
 Fig. \ref{ShearFlow} for a caricature of shear band forming.
 \tcb{
 Shear bands correspond to material instabilities and are usually observed in the interior of specimens. Typically, the maximum temperature is measured in the interior of the band \cite{CCHD}.
 }
\begin{figure}
\centering
\vspace{-0.1cm}
\includegraphics[scale=0.5]{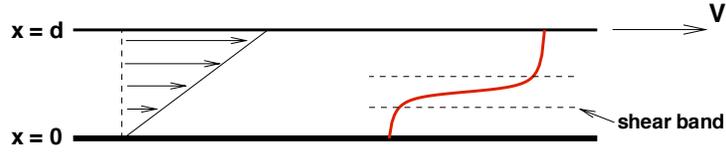}
\vspace{0.1cm}
\caption{Uniform shear versus shear band}.
\label{ShearFlow}
\end{figure}
It was recognized by Zener and Hollomon \cite{ZH} that the  high deformation speed has two effects:
First, an increase in the deformation speed changes the deformation conditions from isothermal
to nearly adiabatic. Under such conditions the combined effect of thermal softening and strain hardening
tends to produce net softening response. (Indeed, experimental observations of shear bands
are  typically associated  with strain softening response -- past a critical strain --
of the measured stress-strain curve \cite{CDHS}.)
Second, strain rate has an effect {\it per se}, and needs to be included in the constitutive modeling.

Both effects are captured by modeling shear band formation via constitutive models
within the framework  of thermoviscoplasticity:
\begin{equation}
\label{thermovpcl}
\sigma = f(\theta, \gamma, \gamma_t) \quad \mbox{where} \quad f_p (\theta, \gamma, p)  > 0 \, .
\end{equation}
The constitutive relation \eqref{thermovpcl} may be viewed as a yield surface or,  upon inverting it, as a plastic flow rule.
This suggests the terminology:
the material exhibits thermal softening at state variables $(\theta, \gamma, p)$
where $f_\theta(\theta, \gamma, p) < 0$, strain hardening at state variables where $f_\gamma(\theta, \gamma, p) > 0$, and strain softening when $f_\gamma(\theta, \gamma, p) < 0$.
The slopes of $f$ -- $f_\theta$, $f_\gamma$ or $f_p$ -- measure respectively the degree of thermal softening, strain hardening (or softening)
and strain-rate sensitivity, respectively.
The difficulty of performing high strain-rate experiments causes uncertainty as to the specific form of the constitutive form of the stress.  Here, we will
use two constitutive laws to describe the stress $\sigma$:
\begin{align}
&  \sigma =  \theta^{-\alpha} \gamma^{m} \gamma_{t}^{n}, \quad & &  \text{ power law }, \label{PL0}\\
& \sigma = e^{-\ga\tht} v_x^n, \quad & & \text{ exponential law} \label{ARL0}  \, .
\end{align}
The power law \eqref{PL0}  characterizes the response of the material. The parameter $\alpha>0$
measures the degree of thermal softening, $m>0$ measures the degree of strain hardening (or $m<0$ in case of a softening plastic flow), while $n>0$ measures strain-rate hardening and is typically small, $n \ll 1$, \cite{CDHS, clifton_rev_1990}. It is an empirical law and the parameters are determined by fitting experimental data.

We summarize the equations describing the model. For the power law
the resulting system reads
\begin{equation}
  \label{PLS}
  \begin{aligned}
    & v_{t} =  \sigma_{x},\\
    & \theta_{t} = \kappa \theta_{ x x}  +  \sigma \gamma_{t}, \\
    & \gamma_{t} = v_{x},  \\
    & \sigma  = \theta^{-\alpha}\gamma^{m}\gamma_{t}^n \, .
  \end{aligned}
\end{equation}
The system \eqref{PLS} captures the simplest mechanism proposed for shear localization in
high-speed deformations of metals \cite{ZH, clifton_rev_1990}, and an (isothermal) variant appears in early studies of necking \cite{HN77}.
Very often attention is restricted to the adiabatic model $\kappa = 0$ which is appropriate for the initial
development of shear bands under very fast deformations.

The exponential law does not exhibit any strain hardening and thus \eqref{mechmodel} decouples and
leads to the simplified system
\begin{equation}
  \label{ARS}
  \begin{aligned}
    & v_{t} = \sigma_{x},\\
    & \theta_{t} = \kappa \theta_{ x x}  +  \sigma v_x \\
    & \sigma  = e^{-\ga\theta} v_x^n \, .
  \end{aligned}
\end{equation}
The exponential law  can be interpreted as a temperature dependent non-Newtonian fluid and {is exactly} \eqref{explaw} for adiabatic deformations ($\kappa =0$).

\subsection{Uniform shearing solutions} \label{sec:uss}

In the study of shear bands,  a special problem is often considered  where an infinite slab of material
is  sheared by prescribed constant velocity $V=1$ at the upper plate while the lower plate is held fixed.
This is described by setting the plates at $x=0, 1$ and imposing prescribed (normalized) velocities $v(t,0) = 0$,  $v(t,1) = 1$, respectively.
The plates are thermally insulated:  $ \theta_{x}(t,0) = 0$,  $\theta_{x}(t,1) = 0$.
For the heat flux $Q$ one either uses the adiabatic assumption $Q = 0$ (equivalently $\kappa = 0$)
or alternatively a Fourier  law, $Q = \kappa \theta_{x}$ with thermal diffusivity
parameter $\kappa$.
Imposing adiabatic conditions projects the belief that, at high strain rates,
heat diffusion operates at a slower time scale than the time-scale of the development
of a shear band. It appears a plausible assumption for the shear band initiation process,
but not necessarily for the evolution of a developed band,
 due to the high temperature differences involved.

The model \eqref{PLS} admits  a special class of solutions describing uniform shearing: They emanate
from spatially uniform initial data $\gamma_0$ and $\theta_0$, and are obtained by the {\it ansatz} $\gamma_s (t) = t + \gamma_0 $ and $v_s (x) = x$
for the strain and velocity respectively. They are obtained
upon solving the ordinary differential equation
\begin{equation}
\label{uss2}
\frac{d \theta_s }{dt} = \sigma_s = \theta_s^{-\alpha} (t + \gamma_0)^m \, , \quad \theta_s(0) = \theta_0 \, ,
\end{equation}
and read
\begin{equation} \label{uss}
\begin{aligned}
v_s (x)  &=x \, ,   \quad  \gamma_s(t) = t+\gamma_0,  \quad
\\
\theta_s(t) &=  \left( \tfrac{1+\alpha}{1+m }\right )^{\frac{1}{1+\alpha}}  (t+\gamma_0)^{\frac{1 + m}{1+\alpha}}
 \left( 1 +  \tfrac{1+m}{1+\alpha} \big (  \theta_0^{1+\alpha}  - \tfrac{1+\alpha}{1+m} \gamma_0^{1+m} \big ) \tfrac{1}{(t+\gamma_0)^{m+1}}  \right)^{\frac{1}{1+\alpha}}
 \\
  \sigma_s(t)&=
   \left( \tfrac{1+\alpha}{1+m}\right )^{-\frac{\alpha}{1+\alpha}}  (t+\gamma_0)^{\frac{-\alpha + m}{1+\alpha}}
   \left( 1 +  \tfrac{1+m}{1+\alpha} \big (  \theta_0^{1+\alpha}  - \tfrac{1+\alpha}{1+m} \gamma_0^{1+m} \big ) \tfrac{1}{(t+\gamma_0)^{m+1}}  \right)^{\frac{-\alpha}{1+\alpha}} .
\end{aligned}
\end{equation}
Equation \eqref{uss}$_3$ describes the stress-strain curve $\sigma_s$ versus $\gamma_s$ for uniform shear.
The stress-strain curve is increasing when $\alpha < m$ but it is decreasing for large times when $\alpha > m$.
Here, we are interested in the regime $\alpha > m$ where thermal softening dominates strain hardening and produces net softening.

\subsection{On the stability of the uniform shearing solution}

The system \eqref{intro-system1} for $n=0$ is a first-order system. When $\alpha > m$, the initial value problem  has two purely imaginary eigenvalues in a regime
of strain beyond the maximum of the stress-strain curve (see Appendix \ref{append:hadamard}).
Accordingly, the linearized system (for $n=0$) around the uniform shearing solution \eqref{uss} exhibits {\it Hadamard instability}, see Appendix \ref{append:hadamard}.

The stability of the uniform shear solution for $n > 0$ has been the objective of many investigations. Since \eqref{uss} is time dependent this
leads to investigations of non-autonomous systems. A natural way to define stability is to consider
\begin{equation} \label{ussgr}
\begin{aligned}
\gamma_s^* (t) = t+\gamma_0,  \quad
\theta_s^* (t) &=  \left( \tfrac{1+\alpha}{1+m }\right )^{\frac{1}{1+\alpha}}  (t+\gamma_0)^{\frac{1 + m}{1+\alpha}} \, ,
\end{aligned}
\end{equation}
the functions capturing the growth of the uniform shearing solution, and to study the relative perturbations
\begin{equation}
u (t,x) = v_x (t,x) \, , \quad \hat\Gamma (t,x) = \frac{\gamma (t,x)}{\gamma_s^* (t)} \, , \quad \hat\Theta (t,x) = \frac{\theta (t,x)}{\theta_s^* (t)}.
\end{equation}
\begin{itemize}
\item
The uniform shear solution is {\it asymptotically stable} when the solution emanating from small perturbations
of \eqref{uss} satisfies that $(u, \hat\Gamma, \hat\Theta) \to (1,1,1)$ as time goes to infinity.
\item
The uniform shear solution is {\it unstable} if for small perturbations of \eqref{uss} the relative perturbations
$(u, \hat\Gamma, \hat\Theta)$ drift away from $(1,1,1)$ as time increases.
\end{itemize}

This notion of stability is used in nonlinear stability studies of shear bands \cite{DH_1983,Tz_1986} as well as
in linearized stability analyses by  Molinari and Clifton \cite{MC,FM}
who coined the name {\it stability analysis of relative perturbations}. The problem of stability is presently resolved only for the special cases
$m=0$ or $\alpha = 0$ for \eqref{intro-system1};  these are cases that the system decouples and reduces to simpler models:
\begin{itemize}
\item[(i)] Case $m=0$: The uniform shear is linearly stable when $-\alpha + n > 0$ and linearly unstable when $-\alpha + n < 0$ \cite{MC,FM};
it is nonlinearly stable in the region $-\alpha + n > 0$, \cite{Tz_1986}.
\item[(ii)] Case $\alpha =0$, $m > -1$: The uniform shear is linearly stable when $m + n > 0$ and linearly unstable when $m+ n < 0$, \cite{FM,tzavaras_nonlinear_1992};
it is nonlinearly stable in the region $m + n > 0$, \cite{tzavaras_plastic_1986}.
\end{itemize}

Understanding of the nature of the instability is offered in \cite{KOT14} for the model \eqref{explaw}, which has the special property
that both the nonlinear and the linearized analysis of relative perturbations is reduced to studying {\it autonomous systems}.
In particular, linearized stability (or instability) can be accessed  via analyzing Fourier modes; see \cite{KOT14}.
For $n=0$, the linearized stability analysis predicts  exponential growth of the high frequency modes, leading to what is usually termed as {\it Hadamard instability}.
By contrast, when $n > 0$ the linear modes are still unstable and their growth rates are increasing with frequency but they are
uniformly bounded by a bound independent of the frequency.  The behavior of the linearized system around the uniform shearing solution
for the full system \eqref{intro-system1} is at present open;  the conjecture is that it has the same structure as described above for relative perturbations of \eqref{explaw} when $n>0$ is small,
and it is stable past a certain threshold.
This is corroborated by linearized analysis for the special case \eqref{intro-system1} with $\alpha =0$, $m =-1$, $n <1$, which again
leads to the study of autonomous systems for relative perturbations, \cite{KLT_2016}.

\subsection{The nonlinear regime}

\tcb{ In the unstable parameter regime,
at the initial stage unstable modes start to grow and this process can be captured by the linearized problem.}
The second stage of localization lies within the realm of nonlinear analysis. The  question arises
 how the high frequency oscillations resulting from Hadamard instability interact with
the nonlinearity and the viscosity to form a coherent structure.
An asymptotic criterion accounting for the nonlinear aspects of localization is derived in \cite{KT09}.
Based on ideas from the theory of relaxation system and the Chapman Enskog expansion,  an effective equation is derived for the nonlinear dynamics \eqref{intro-system1}.
It predicts stability in the regime $- \alpha + m + n > 0$ and instability in the regime $-\alpha + m + n < 0$, see \cite{KT09}.

Insight on how coherent structures form can be offered by investigating self-similar solutions \eqref{intro-sols}.
It is customary in studies of parabolic systems (like \eqref{intro-system2}) to investigate diffusing self-similar
solutions corresponding to the parameter selection $\lambda < 0$. By contrast, self-similar solutions with  $\lambda > 0$ tend to propagate information
along the lines $t^\lambda x = const.$ and thus to localize around the point $x=0$.
Self-similar localizing solutions were established in \cite{KOT14}  for the model \eqref{explaw}  using a phase-plane analysis for the resulting two-dimensional
system. They will be pursued also here for the power law
 \eqref{intro-system1}.

\section{Self-similar solutions} \label{sec:scale}

We consider the  system \eqref{intro-system1}  (or the system \eqref{intro-system2}) in the domain $x \in \R$, $t > 0$
and note that both systems are invariant under a  family of scaling transformations:
if $(\gamma,u,v,\theta,\sigma)$ satisfy \eqref{intro-system1}, with $u$, $\sigma$ connected via \eqref{consti}, then for any $\lambda \in \mathbb{R}$ and $\rho > 0$ the rescaled functions
$(\gamma_\rho, u_\rho,v_\rho,\theta_\rho,\sigma_\rho)$ defined by
\begin{equation}\label{eq:scale}
\begin{aligned}
 \gamma_\rho(t,x) &= \rho^a\gamma(\rho^{-1}t,\rho^\lambda x), &
 v_\rho(t,x) &= \rho^bv(\rho^{-1}t,\rho^\lambda x),\\
 \theta_\rho(t,x) &= \rho^c\theta(\rho^{-1}t,\rho^\lambda x), &
 \sigma_\rho(t,x) &= \rho^d\sigma(\rho^{-1}t,\rho^\lambda x),\\
 u_\rho(t,x) &= \rho^{b+\lambda}\gamma(\rho^{-1}t,\rho^\lambda x)
\end{aligned}
\end{equation}
also satisfies \eqref{intro-system1}, provided
\begin{equation} \label{eq:exponents}
\begin{aligned}
 a&:= a_0 + a_1 \lambda=\frac{2+2\alpha-n}{D} + \frac{2(1 + \alpha)}{D}\lambda, & b&:=b_0 + b_1\lambda=\frac{1+m}{D} + \frac{1+m+n}{D}\lambda ,\\
 c&:=c_0 + c_1\lambda=\frac{2(1+m)}{D} + \frac{2(1+m+n)}{D}\lambda, & d&:=d_0 + d_1\lambda=\frac{-2\alpha + 2m +n}{D} + \frac{2(-\alpha+m+n)}{D}\lambda,
\end{aligned}
\end{equation}
and
\begin{equation}
\label{defD}
D = 1+2\alpha-m-n \, .
\end{equation}
\tcb{
The same scaling trasformation leaves invariant solutions of \eqref{intro-system2}.
We note there are two independent scaling parameters in \eqref{eq:scale}, $\rho$ and $\lambda$, while the
remaining parameters are
determined by the relations \eqref{eq:exponents}, \eqref{defD}.
}
Throughout this work, the material parameters $(\alpha,m,n)$ will be restricted to the range
\begin{equation}
\label{eq:paramrange}
 \begin{aligned}
  \alpha>0\quad&\text{(thermal softening)},\\
  m>-1 \quad&\text{(strain softening/hardening)}, \\
  n>0 \quad&\text{(strain rate sensitivity)},\\ 
  -\alpha+m+n<0 \quad&\text{(unstable regime)}.  \\
\end{aligned}
\end{equation}
Observe that \eqref{eq:paramrange}$_4$ implies that $-\alpha + m <  0$ and thus we are in the regime of net softening, where
the associated hyperbolic system with $n=0$ loses hyperbolicity, see Appendix \ref{append:hadamard}.  Moreover,
$D>1+\alpha>1$   {  while $1 + \alpha - n > 1 + m > 0$. }

Solutions of \eqref{intro-system1} or \eqref{intro-system2} that are self-similar with respect to the scaling transformation
 \eqref{eq:scale}, \eqref{eq:exponents}, \eqref{defD} have the form
\begin{equation}\label{eq:ORItoCAP}
\begin{aligned}
 \gamma(t,x) &= t^a\Gamma(t^\lambda x), & v(t,x) &= t^b V(t^\lambda x), &\theta(t,x) &= t^c \Theta(t^\lambda x),\\
 \sigma(t,x) &= t^d \Sigma(t^\lambda x), & u(t,x) &= t^{b+\lambda} U(t^\lambda x) \, ,
\end{aligned}
\end{equation}
and depend on one parameter, $\lambda$. In the sequel, we are interested in constructing solutions \eqref{eq:ORItoCAP}
defined in the domain $x \in \R$, $t > 0$ for values of the parameter $\lambda > 0$.

\tcb{
To motivate the role of self-similar solutions with $\lambda>0$ and some forthcoming selections,
recall that the fundamental solution of the
heat equation $u_t = u_{xx}$ is of self-similar form

$$
u(t, x) = \frac{1}{\sqrt{t}}U ( \frac{x}{\sqrt{t}} )\, .
$$
Moreover, power nonlinear parabolic diffusion equations (such as the porous media) admit self-similar solutions
which correspond to values $\lambda < 0$ and capture the effect of {\it diffusion}.
We are interested here to investigate whether the couplings with the remaining equations in \eqref{intro-system2}
can lead to the opposite behavior, of {\it localization}, and we seek existence of self-similar solutions
with the parameter in the range $\lambda>0$.
Note that profiles of the form \eqref{eq:ORItoCAP} with $\lambda > 0$ are constant on lines $\xi = t^\lambda x$  and are thus expected
to localize in space as time evolves. In order to compare the solutions we intend to construct for $\lambda > 0$,
with the existing self-similar solutions  of nonlinear parabolic equations,
we seek solutions where  $u(t,x)$ admits a maximum located at $x = 0$ for all times.
This imposes for self-similar solutions that
\begin{equation}
\label{bc1}
U'(0) = 0
\end{equation}
and as we will see this induces some symmetry properties to solutions.

Introducing the ansatz \eqref{eq:ORItoCAP} into \eqref{intro-system1}
gives a system of ordinary differential equations
\begin{equation}
\label{eq:ss-odes}
\begin{aligned}
V' &= U
\\
\Sigma' &= b V + \lambda \xi U
\\
 c \Theta + \lambda \xi \Theta' &=\Sigma U
 \\
a \Gamma + \lambda \xi \Gamma'  &= U
\end{aligned}
\end{equation}
together with an algebraic equation,
 \begin{equation}
\label{eqnsigma}
 \Sigma = \Theta^{-\alpha} \Gamma^m U^n \, ,
\end{equation}
obtained from \eqref{consti}.
This is viewed as a first order system for the variable $(V , \Sigma, \Theta , \Gamma)(\xi)$ with
 $U(\xi)$  determined by  inverting \eqref{eqnsigma}.
In principle, solutions
of \eqref{eq:ss-odes} will depend on five data inputs:  the initial data $(V_0, \Sigma_0, \Theta_0, \Gamma_0)$ and the
parameter $\lambda$.
The system \eqref{eq:ss-odes} is non-autonomous and singular at $\xi = 0$, what  imposes compatibility conditions.

For smooth initial data,  solutions of  \eqref{intro-system2} either blow-up in finite time
or they are as smooth as the initial data  \cite[Theorem 1]{Tz_1987} (in fact analytic for analytic initial data).
Thus the self-similar solutions will be sought to be smooth with their maxima fixed at the origin. The latter can be always achieved
due to the translation invariance of  \eqref{intro-system2}.
Next, we discuss the conditions imposed by these requirements:
The  initial conditions
\begin{equation}
\label{eq:ic0}
V(0) = V_0 \in \mathbb{R} \, , \quad  \Sigma (0) = \Sigma_0 >0 ,  \quad \Theta (0) = \Theta_0>0 \, , \quad \Gamma (0) = \Gamma_0>0 \, , \quad U(0) = U_0>0
\end{equation}
are supplemented with \eqref{bc1}.
Since the solution is smooth the
singularity at $\xi = 0$ imposes two compatibility conditions on
the data which together with \eqref{eqnsigma} imply
\begin{equation}
\label{compatibility}
a \Gamma_0 = U_0 \, , \quad
c \Theta_0 = \Sigma_0 U_0 \, , \quad \Sigma_0 = \Theta_0^{-\alpha} \Gamma_0^m U_0^n
\end{equation}
with $a,b, c$ given by \eqref{eq:exponents}. The condition $U'(0) = 0$ together with the smoothness of
the solution yields upon differentiating \eqref{eq:ss-odes} and \eqref{eqnsigma}
\begin{equation}
\label{algsys}
\begin{aligned}
(a + \lambda) \Gamma'(0) &= U' (0) = 0
\\
(c + \lambda) \Theta'(0) &= \Sigma'(0) U(0) + \Sigma (0) U' (0)
\\
&= -\alpha \frac{\Sigma_0 U_0}{\Theta_0} \Theta'(0) + m \frac{\Sigma_0 U_0}{\Gamma_0} \Gamma' (0).
\end{aligned}
\end{equation}
By \eqref{eq:exponents} and \eqref{eq:paramrange},  for $\lambda >0$ we have $a > 0$, $c > 0$, hence
\begin{equation}
\label{dercon1}
\Gamma' (0) = 0 \, , \quad \Theta'(0) = 0 \, .
\end{equation}
Again by \eqref{bc1},  \eqref{eqnsigma}, \eqref{eq:ss-odes}$_2$ \tcb{and $b>0$ },
\begin{equation}
\label{dercon2}
\Sigma'(0) = 0 \, , \quad V(0) = V_0 = 0 \, .
\end{equation}
Finally, since \eqref{eq:ss-odes} is invariant under the change of variables
$$
\xi \to -\xi \, , \quad V \to - V \, , \quad U \to U \, , \quad  \Theta \to \Theta \, , \quad \Gamma \to \Gamma
$$
it admits solutions such that $U$, $\Theta$, $\Gamma$ and $\Sigma$ are even functions of $\xi$, while
$V$ is an odd function of $\xi$.

In summary, we proceed as follows: We first construct a solution $(V(\xi) , \Sigma(\xi), \Theta(\xi), \Gamma(\xi))$
of \eqref{eq:ss-odes} defined for $\xi \in [0,\infty)$ and set $U(\xi)$ by \eqref{eqnsigma}. The solution will be
sought subject to the data
\begin{equation}
U'(0)=\Gamma'(0)=\Sigma'(0)=\Theta'(0)=0 \label{eq:bdry0}
\end{equation}
\begin{equation}
\label{eq:ic}
V(0) = 0, \quad U(0) = U_0>0 \, , \quad \Gamma (0) = \Gamma_0>0 \, , \quad \Theta (0) = \Theta_0>0 \, , \quad \Sigma (0) = \Sigma_0>0 \, .
\end{equation}
satisfying the compatibility conditions  \eqref{compatibility} for some $\lambda > 0$.
It is not a-priori clear that this problem is not overdetermined and we
give a detailed analysis of this point  in Section \ref{sec:char}.
The constructed solution is then extended on $(-\infty, 0]$ by setting
$$
V(-\xi ) = -V(\xi) \, , \quad U(-\xi) = U(\xi) \, , \quad \Theta (-\xi) = \Theta (\xi) \, , \quad \Gamma (-\xi) = \Gamma (\xi) \, ,
$$
that is we use an odd extension for $V$ and even extensions for  $U$, $\Theta$, $\Gamma$ and $\Sigma$.
Given the material constants $(\alpha, m, n)$ there are two independent parameters in the problem,
which may be viewed as $\Gamma_0$, $U_0$ and $\lambda > 0$ subject to the constraint
\begin{equation}
\label{rate}
\frac{U_0}{\Gamma_0} = \frac{2+2\alpha-n}{D} + \frac{2(1 + \alpha)}{D}\lambda \, .
\end{equation}
The remaining constants $\Theta_0$ and $\Sigma_0$ are determined via \eqref{compatibility}.

The profiles are constructed in the forthcoming sections \ref{sec:formulation}-\ref{sec:proof}. Then in section
\ref{sec:localization} we check that the profiles are {\it localizing} according to the definition \eqref{deflocal1}-\eqref{deflocal2}
in the Introduction. This is based on the asymptotic behavior of the constructed profiles $\xi \to \infty$, established
in Proposition \ref{prop:ss}.
}

Note that the uniform shearing solution is achieved as a self-similar profile for  $\lambda = -\frac{1+m}{2(1+\alpha)}<0$ and
\begin{equation*}
 \Gamma(\xi) = U(\xi)=U_0, \quad V(\xi) = U_0\xi, \quad  \Theta(\xi) = \Big( \frac{1+\alpha}{1+m} U_0^{1+m+n}\Big)^{\frac{1}{1+\alpha}}, \quad \Sigma(\xi) = \Big(\frac{1+\alpha}{1+m}\Big)^{\frac{-\alpha}{1+\alpha}} U_0^{\frac{-\alpha+m+n}{1+\alpha}}.
\end{equation*}
The uniform shear should be contrasted to the solutions that are constructed here which exhibit localizing behavior:
the growth of the strain is superlinear at  the origin and the profiles of the solution (at fixed times) localize  as time proceeds, see Section \ref{sec:localization}.

\medskip

We give a short roadmap of how we proceed to construct the solution of \eqref{eq:ss-odes}, \eqref{eq:bdry0}, \eqref{eq:ic} and
determine its properties.
\begin{itemize}
\item[(a)]
In section \ref{sec:formulation} we de-singularize \eqref{eq:ss-odes} and re-formulate it as an autonomous system,
see \eqref{eq:slow}.
\item[(b)]
In section \ref{sec:equil} we determine two equilibria $M_0$ and $M_1$  so that a heteroclinic orbit of \eqref{eq:slow} provides a meaningful, for the localization problem, self-similar profile.
\item[(c)]
Section \ref{sec:char} discusses the behavior of \eqref{eq:ss-odes} near $\xi = 0$ and what it implies for the heteroclinic orbit.
\item[(d)]
Section \ref{sec:proof}  is the core of the proof: the geometric singular perturbation theory is used to construct a heteroclinic
orbit joining $M_0$ to $M_1$ for system \eqref{eq:slow}.
\item[(e)] In Section \ref{sec:localization} we show that the self-similar profiles are localizing in the sense of Definition
\eqref{deflocal1}, \eqref{deflocal2} in the Introduction.
In section \ref{sec:numerics} we outline  a continuation method to compute the heteroclinic orbits via a standard package and provide
numerical examples of the emerging solutions.
\end{itemize}
As an outcome of this construction, it turns out  there is a two parameter family of solutions depending on the data $U_0$ and $\Gamma_0$ with the rate $\lambda$ determined via \eqref{rate}.
The dynamic stability of the solutions is a challenging open problem.

\section{Reduction to the construction of a heteroclinic orbit} \label{sec:formulation}
The goal of this section is to derive an equivalent system \eqref{eq:slow} to \eqref{eq:ss-odes} that is autonomous and to turn the problem of
constructing profiles for \eqref{eq:ss-odes} to the construction of a heteroclinic orbit for \eqref{eq:slow}. We employ techniques from \cite{KOT14} and \cite{LT16}.
The novelty of the present analysis lies in the higher dimensionality of the resulting system especially  with regard to the construction of the heteroclinic orbit.

\subsection{De-singularization}
We regard \eqref{eq:ss-odes} as a boundary-value problem in the right  half-line $\xi \in [0,\infty)$ subject to the boundary conditions \eqref{eq:bdry0} and proceed to
de-singularize it. The system \eqref{eq:ss-odes} is itself scale invariant: Given a solution $\big(\Gamma(\xi), V(\xi), \Theta(\xi), \Sigma(\xi), U(\xi)\big)$ the rescaled
function $\big(\Gamma_\rho(\xi), V_\rho(\xi), \Theta_\rho(\xi), \Sigma_\rho(\xi), U_\rho(\xi)\big)$ defined by
\begin{equation}
\label{selfsimilardef2}
\begin{aligned}
 \Gamma_\rho(\xi)&=\rho^{a_1}\Gamma(\rho\xi), & V_\rho(\xi)&=\rho^{b_1}V(\rho\xi),  \quad  \Theta_\rho(\xi)=\rho^{c_1}\Theta(\rho\xi),\\
 \Sigma_\rho(\xi)&=\rho^{d_1}\Sigma(\rho\xi), & U_\rho(\xi)&=\rho^{b_1+1}U(\rho\xi)=\rho^{a_1}U(\rho\xi)
\end{aligned}
\end{equation}
is again a solution.
The class of functions that remain invariant under this scaling transformation is
 $$\big(\Gamma(\xi), V(\xi), \Theta(\xi), \Sigma(\xi), U(\xi)\big)=\big(A\xi^{-a_1}, B\xi^{-b_1},C\xi^{-c_1},D\xi^{-d_1},E\xi^{-a_1}\big)$$
where $A, B, C, D, E$ constants. Such a function is singular at $\xi =0$ and fails to satisfy \eqref{eq:bdry0}. Nevertheless, it suggests the change of variables
\begin{equation} \label{eq:CAPtoBAR}
\begin{aligned}
 \bg(\xi)&=\xi^{a_1}\Gamma(\xi), &
 \bv(\xi)&=\xi^{b_1}V(\xi), &
 \bth(\xi)&=\xi^{c_1}\Theta(\xi),  & 
 \bs(\xi)&=\xi^{d_1}\Sigma(\xi), &
 \bu(\xi)&=\xi^{b_1+1}U(\xi) ,
\end{aligned}
\end{equation}
with $a_1, b_1, c_1$ and $d_1$ as in \eqref{eq:exponents}, in order to de-singularize the problem.
After some cumbersome, but straightforward calculation, we find that $(\bg,\bv,\bth,\bs)$ satisfies
\begin{equation} \label{eq:barsys}
 \begin{aligned}
  a_0\bg + \lambda\xi\bg' &=\bu,\\
  b_0\bv + \lambda\xi\bv' &=-d_1 \bs + \xi\bs',\\
  c_0\bth+ \lambda\xi\bth'&=\bs\bu,\\
  -b_1\bv+\xi\bv' &= \bu,
 \end{aligned}
\end{equation}
and $\bu$ is defined by
$$  \ts =\bth^{-\alpha}\bg^m\bu^n.$$

Next, introduce a new independent variable $\eta = \log\xi$ and define $(\tg,\tv,\tth,\ts,\tu)$ by
\begin{equation} \label{eq:BARtoTIL}
\begin{aligned}
 \tg(\log\xi)&=\bg(\xi), &
 \tv(\log\xi)&=\bv(\xi), &
 \tth(\log\xi)&=\bth(\xi), \\
 \ts(\log\xi)&=\bs(\xi), &
 \tu(\log\xi)&=\bu(\xi).
\end{aligned}
\end{equation}
Noticing that $\frac{d}{d\eta}\tg(\eta) = \xi \frac{d}{d\xi}\bg(\xi)$, we obtain an autonomous system
\begin{equation} \label{eq:tildesys}
 \begin{aligned}
  a_0\tg + \lambda\dtg &=\tu,\\
  b_0\tv + \lambda\dtv &=-d_1 \ts + \dts,\\
  c_0\tth+ \lambda\dtth &= \ts\tu, \\
  -b_1\tv+\dtv &= \tu,
 \end{aligned}
\end{equation}
where the notation $\dot{f}=\frac{df}{d\eta}$ is used, and $\tu$ is defined by
$$
\ts =\tth^{-\alpha}\tg^m\tu^n \, .
$$


The system \eqref{eq:tildesys} is autonomous and one might attempt to consider its equilibria.
However, it is easy to conclude that we cannot expect a heteroclinic that  tends to equilibria of \eqref{eq:tildesys}.
Indeed, suppose $\tu \rightarrow \tu_\infty\ge 0$ as $\eta \rightarrow \infty$.
Then from the last equation in \eqref{eq:tildesys},
we conclude that $\tv \rightarrow \infty$. 
This suggests to enlarge the scope and consider solutions that grow as polynomials (or faster) at infinities.

\subsection{The $(p,q,r,s)$-system derivation}
Next, we attempt to come up with a new choice of variables that tend to equilibria as $\eta \rightarrow \pm \infty$ and accommodate
orbits that have power behavior at infinities. We rewrite \eqref{eq:tildesys} in the form
\begin{equation}
 \label{eq:tildesys2}
\begin{aligned}
\frac{d}{d\eta}{(\ln{\tg})}  &=  \tfrac{1}{\lambda} \big (- a_0 +  \frac{\tu}{\tg} \big ),
\\
 \frac{d}{d\eta}{(\ln{\tv})}  &=   b_1 + \frac{\tu}{\tv} ,
\\
\frac{d}{d\eta}{(\ln{\tth})} &=   \tfrac{1}{\lambda} \big (- c_0 +  \frac{\ts \tu}{\tth} \big ),
\\
\frac{d}{d\eta}{( \ln{\ts}  )} &= d_1 + b \frac{\tv}{\ts} +   \lambda \frac{\tu}{\ts}
\end{aligned}
\end{equation}
and view it as describing the evolution of $(\tg,\tv,\tth,\ts)$ with $\tu$ determined by $\tu = \left ( \frac{\ts}{ \tth^{-\alpha} \tg^m} \right )^\frac{1}{n}$.

This leads us to define
\begin{equation}\label{eq:pqrdef}
 \begin{aligned}
  p :=\frac{\tg}{\ts}, \quad q :=b \frac{\tv}{\ts},  \quad r = \left ( \frac{\ts}{ \tth^{-\alpha} \tg^{m+n}} \right )^\frac{1}{n} = \frac{\tu}{\tg}  , \quad s := \frac{\ts\tg}{\tth} \, .
 \end{aligned}
\end{equation}
The transformation $(p,q,r,s) \leftrightarrow (\tg,\tv, \tth,\ts)$ is a bijection in the positive orthant with the inverse determined by
$$
\tg = p^\frac{1+\alpha}{D} s^\frac{\alpha}{D} r^\frac{n}{D} \, ,  \quad \tth = p^\frac{1+m+n}{D} s^\frac{m+n-1}{D} r^\frac{2n}{D} \, ,
$$
and then
$$
\ts = \frac{1}{\tg} p \, , \quad \tv = \frac{1}{b} \ts \, q = \frac{1}{b} \frac{ p q}{\tg} \, .
$$
Using \eqref{eq:tildesys2} and \eqref{eq:pqrdef}, we write
\begin{align*}
 \frac{\dpp}{p}&=\frac{\dtg}{\tg} - \frac{\dts}{\ts}& &=\left[\frac{1}{\lambda }\Big(\frac{\tu}{\tg}-a_0\Big)\right] & &-\left[d_1 + b\frac{\tv}{\ts} +
 \lambda   \frac{\tu}{\tg} \frac{\tg}{\ts}  \right]
 \\
 \frac{\dqq}{q}&=\frac{\dtv}{\tv} - \frac{\dts}{\ts}& &=\left[b_1 +\frac{\tu}{\tv}\right] & &-\left[d_1 + b\frac{\tv}{\ts} + \lambda  \frac{\tu}{\tg} \frac{\tg}{\ts}    \right]
 \\
 n\frac{\drr}{r}&=-(m+n)\frac{\dtg}{\tg}+\frac{\dts}{\ts} + \alpha\frac{\dtth}{\tth} & &=\left[\frac{-(m+n)}{\lambda}\Big(\frac{\tu}{\tg}-a_0\Big)\right]& &+
 \left[d_1 + b\frac{\tv}{\ts} + \lambda  \frac{\tu}{\tg} \frac{\tg}{\ts}   \right] + \left[\frac{\alpha}{\lambda }\Big(\frac{\ts\tu}{\tth}-c_0\Big)\right]\\
 \frac{\dot{s}}{s} &= \frac{\dtg}{\tg} + \frac{\dts}{\ts} - \frac{\dtth}{\tth} & &=\left[\frac{1}{\lambda }\Big(\frac{\tu}{\tg}-a_0\Big)\right] & &+\left[d_1 + b\frac{\tv}{\ts}
 + \lambda  \frac{\tu}{\tg} \frac{\tg}{\ts}  \right] -\left[\frac{1}{\lambda }\Big(\frac{\ts\tu}{\tth}-c_0\Big)\right].
\end{align*}
We note that
\begin{align*}
 \frac{\ts\tu}{\tth} = rs, \quad \frac{\tu}{\tv} = \frac{bpr}{q}, \quad \frac{\tu}{\ts} = pr,
\end{align*}
and using \eqref{eq:exponents} and \eqref{defD}, after a cumbersome but straightforward calculation, we derive the $(p,q,r,s)$-system:
\begin{equation}\label{eq:slow} \tag{S}
 \begin{aligned}
 \dot{p} &=p\Big(\frac{1}{\lambda}(r-a) + 2- \lambda p r -q\Big),\\
 \dot{q} &=q\Big(1 -\lambda p r -q\Big) + b p r,\\
 n\dot{r} &=r\Big(\frac{\alpha-m-n}{\lambda(1+\alpha)}(r-a) + \lambda pr + q +\frac{\alpha}{\lambda}r\big(s- \frac{1+m+n}{1+\alpha}\big) + \frac{n\alpha}{\lambda(1+\alpha)}\Big),\\
 \dot{s} &=s\Big(\frac{\alpha-m-n}{\lambda(1+\alpha)}(r-a) + \lambda pr + q - \frac{1}{\lambda}r\big(s- \frac{1+m+n}{1+\alpha}\big) - \frac{n}{\lambda(1+\alpha)}\Big).
 \end{aligned}
\end{equation}
In the sequel, we analyze \eqref{eq:slow} as an autonomous system: We begin with sorting its equilibria and analyzing their linear stability. Most importantly, \eqref{eq:slow} possesses the {\it fast-slow} structure because of the small parameter $n$ in the left-hand-side of $\eqref{eq:slow}_3$; the dynamics of $r$ can be distinctively faster than those of the other variables off the nullcline of $r$.

\section{Equilibria and their linear stability} \label{sec:equil}

System \eqref{eq:slow} admits several equilibria listed in Appendix \ref{append:equi_reject}.
Our region of interest is the sector
$$
\mathcal{P} = \{(p,q,r,s) \; | \; p\ge0, q\ge0, r>0, s>0 \}.
$$
That $p,q\ge0$ comes from the requirement that $\tg,\tv,\ts\ge0$. The reason we restrict to $r > 0$, $s > 0$
stems from mechanical considerations: If we transform back to the original variables, then we find
\begin{equation*}
 r(\eta)|_{\eta=t^\lambda x}=t\partial_t\log \gamma(t,x), \quad r(\eta)s(\eta)|_{\eta=t^\lambda x}=t\partial_t \log \theta(t,x).
\end{equation*}
Shear band initiation is related to conditions of loading where both the plastic strain and the temperature are increasing. This motivates to restrict to self-similar solutions
taking values in the region $r > 0$, $s > 0$.

From the complete set of equilibria for \eqref{eq:slow} listed in Appendix \ref{append:equi_reject} only two reside in
the region $r > 0$, $s >0$, namely
\begin{align*}
 M_0 &= (0,0,r_0,s_0), & r_0 & =a , & s_0&=\frac{1+m+n}{1+\alpha} - \frac{n}{(1+\alpha)r_0},\\
 M_1 &= (0,1,r_1,s_1), & r_1 & = a -\frac{1+\alpha}{\alpha-m-n}\lambda, & s_1&=\frac{1+m+n}{1+\alpha} - \frac{n}{(1+\alpha)r_1} \, .
\end{align*}
\tcb{
Here, we recall \eqref{eq:exponents}, \eqref{defD}:
$$
\begin{aligned}
a = a_0 + a_1 \lambda &= \frac{2+2\alpha-n}{D} + \frac{2+2\alpha}{D}\lambda \, \\
D &= 1+2\alpha-m-n  \, ,
\end{aligned}
$$
and that the parameters $(\alpha, m, n)$ take values in the range \eqref{eq:paramrange}. As a consequence
$r_0>0$, and a simple calculation shows that  $r_0s_0 = \frac{2(1+m)}{D} + \frac{2(1+m+n)}{D}\lambda>0$; hence,
$r_0, s_0>0$ and $M_0$  resides in the region $\mathcal{P}$.
}
By contrast, $M_1$ can be out of the region $r>0$, $s>0$ if $\lambda$ is large enough. Note that $r_1,s_1>0$ only if $\frac{1+m+n}{1+\alpha}r_1 > \frac{n}{(1+\alpha)}$. This reads
$$\frac{1+m+n}{1+\alpha}\Big(\frac{2+2\alpha-n}{D} - \frac{(1+\alpha)(1+m+n)}{D(\alpha-m-n)}\lambda\Big) > \frac{n}{(1+\alpha)},$$
and thus $M_1$ resides in the region $\mathcal{P}$ only under the constraint
\begin{equation} \label{eq:lambda-range}
 0< \lambda < \frac{2(\alpha-m-n)}{1+m+n}\left(\frac{1+m}{1+m+n}\right).
\end{equation}
Henceforth, we restrict attention to rates $\lambda$ satisfying \eqref{eq:lambda-range}.

\begin{figure}
 \centering
  \psfrag{x0}{\scriptsize $M_0$}
  \psfrag{x1}{\scriptsize $M_1$}
  \psfrag{x2}{~~\scriptsize $1$}
  \psfrag{x3}{}
  \psfrag{p}{\scriptsize $p$}
  \psfrag{q}{\scriptsize~~~$q$}
  \psfrag{q*}{}
  \psfrag{r*0}{}
  \psfrag{r*1}{}
  \psfrag{r*2}{}
  \subfigure[$pqr$-space]{
  \psfrag{r}{\scriptsize$r$}
  \includegraphics[width=6cm]{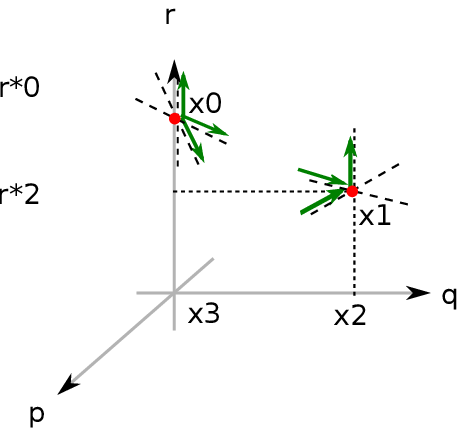}\label{fig:eq1}
  }
  \quad \quad
  \subfigure[$pqs$-space]{
  \psfrag{r}{\scriptsize$s-\frac{1+m}{1+\alpha}$}
  \includegraphics[width=6cm]{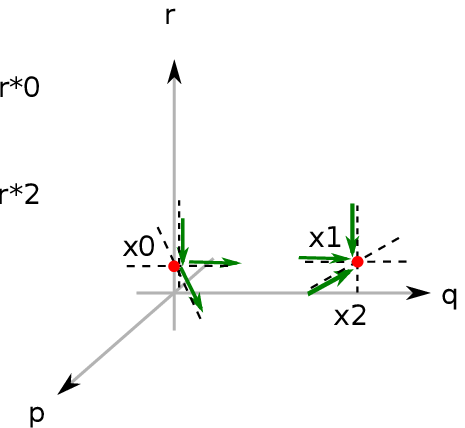}\label{fig:eq2}
  }
  \caption{Eigenvectors around $M_0$ and $M_1$ in $pqr$-space and in $pqs$-space respectively ($\mu_{11}\ne-1$ and $n\ll1$). } \label{fig:equilibria}
\end{figure}

We denote the four eigenvalues and four eigenvectors of the vector field linearized at $M_i$, $i=0,1$,  by $\mu_{ij}$ and $X_{ij}$ with $j=1,2,3,4$.
\begin{itemize}
 \item $M_0$ is a saddle; the matrix of the linearized vector field at $M_0$ has three positive eigenvalues and one negative eigenvalue.
 \begin{equation} \label{eq:eigM0}
  \mu_{01} = 2, \quad \mu_{02}=1, \quad \mu_{03}=\mu_0^+=\BO\Big(\frac{1}{n}\Big)>0, \quad \mu_{04}=\mu_0^{-}<0,
 \end{equation}
where $\mu_0^\pm$ are respectively a positive and a negative solution of the quadratic equation
 $$
 \Big(\mu - \frac{r_0}{n}\Big(\frac{1-s_0}{\lambda}-\frac{n}{\lambda r_0}\Big)\Big)\Big(\mu + \frac{s_0r_0}{\lambda}\Big) -
 \frac{s_0r_0(1-s_0)(\alpha r_0)}{n\lambda^2} = 0.
 $$
The leading orders of $\mu_0^\pm$ are given by
$$\
\mu_0^+ = \frac{1}{n} \tfrac{2(\alpha-m)(1+\alpha)(1+\lambda)}{\lambda(1+\alpha)(1+2\alpha-m)}+\BO(1), \quad
\mu_0^- = -\tfrac{2(1+m)(1+\alpha)(1+\lambda)}{\lambda(1+2\alpha-m)}  + \BO(n).$$
Notice that one of the positive eigenvalue $\mu_{03}$ is $\BO( \frac{1}{n})$, which indicates the separably fast dynamics along the direction $X_{03}$. We will make use of this structure later.
The precise eigenvector components are presented in Appendix \ref{append:lin}, the directions of the eigenvectors are
pointed out in Fig. \ref{fig:equilibria} for $n$ sufficiently small.
 \item $M_1$ is a saddle;  the matrix of the linearized vector field at $M_1$ has one positive eigenvalue and three negative eigenvalues.
\begin{equation} \label{eq:eigM1}
 \mu_{11}=-\frac{1+m+n}{\alpha-m-n}, \quad \mu_{12}=-1, \quad \mu_{13}=\mu_1^+=\BO\Big(\frac{1}{n}\Big)>0, \quad \mu_{14}=\mu_1^{-}<0,
\end{equation}
where $\mu_1^\pm$ is respectively a positive and a negative solution of the quadratic equation
 $$
 \Big(\mu - \frac{r_1}{n}\Big(\frac{1-s_1}{\lambda}-\frac{n}{\lambda r_1}\Big)\Big)\Big(\mu + \frac{s_1r_1}{\lambda}\Big) - 
\frac{s_1r_1(1-s_1)(\alpha r_1)}{n\lambda^2} = 0.
$$
The leading orders of $\mu_1^\pm$ are given by
\begin{align*}
\mu_1^+ &= \frac{\alpha-m}{n\lambda(1+\alpha)}\Big(\frac{2(1+\alpha)(1+\lambda)}{(1+2\alpha-m) } - \frac{1+\alpha}{\alpha-m-n}\lambda\Big) + \BO(1), \\
\mu_1^- &= -\frac{1+m}{\lambda}\Big(\frac{2(1+\alpha)(1+\lambda)}{(1+2\alpha-m) } - \frac{1+\alpha}{\alpha-m-n}\lambda\Big) + \BO(n).
\end{align*}
Note that the positive eigenvalue $\mu_{13}$ is $\BO( \frac{1}{n})$.
In constrast to what happens at $M_0$, the eigenvalues of the linearized vector field at $M_1$ may have multiplicity higher than one. Appendix \ref{append:lin} describes  the possible cases and provides the generalized eigenvectors when necessary.
\end{itemize}

\section{Characterization of the heteroclinic orbit} \label{sec:char}
The equilibrium $M_0$ has a three dimensional unstable manifold and a one dimensional stable manifold while the equilibrium $M_1$ has a three dimensional
stable manifold and a one-dimensional unstable manifold. There is one unstable direction for each equilibrium corresponding to a positive
eigenvalue of order $\BO (\frac{1}{n})$.  Due to the high dimensionality, it is difficult to read the complete behavior
of the flow in phase space. This section aims to develop a picture of the flow on the positive orthant $p,q,r,s > 0$ and
to associate the behavior  of the system \eqref{eq:ss-odes} near the singular point $\xi = 0$ with the behavior of the system \eqref{eq:slow} around $M_0$.

\subsection{Behavior near the singular point $\xi =0$}
We begin with the latter point. The following proposition states how \eqref{eq:ic0}, \eqref{bc1} are transmitted to the asymptotic behavior of $(p,q,r,s)$
around the equilibrium $M_0$ as $\eta \to -\infty$.

\tcb{
\begin{proposition} \label{prop1}
    Let $\big(V, \Sigma, \Theta, \Gamma \big)(\xi)$ be a smooth solution of \eqref{eq:ss-odes},  $U (\xi)$ be defined by \eqref{eqnsigma},
    and suppose the solution is defined for $\xi > 0$, is smooth,  takes values in the positive orthant, and assumes the initial conditions
    $$
    V(0) = V_0 \ge 0, \quad \Sigma(0) = \Sigma_0 > 0 \, , \quad \Theta (0) = \Theta_0 > 0 \, \quad \Gamma_0 = \Gamma_0 > 0 \quad \mbox{and} \;\; U'(0) =0 \, .
    $$
    Then $\big(V, \Sigma, \Theta, \Gamma \big)$ and $U$ satisfy at $\xi =0$ the  conditions \eqref{compatibility}, \eqref{eq:bdry0}, \eqref{eq:ic}. Morever,  the orbit defined by the transformations \eqref{eq:CAPtoBAR},
    \eqref{eq:BARtoTIL},  \eqref{eq:pqrdef}, $\chi(\eta) = (p(\eta), q(\eta), r(\eta),s(\eta)) \rightarrow M_0$ as $\eta \rightarrow -\infty$.
    Furthermore, it tends to $M_0$ along the direction of the first eigenvector $X_{01}$, in fact
    \begin{equation} \label{eq:alpha}
     e^{-2\eta}\big(\chi(\eta) - M_0 \big) \rightarrow \kappa X_{01}, \quad \text{for some constant  $\kappa>0$ as $\eta \rightarrow -\infty$.}
    \end{equation}
\end{proposition}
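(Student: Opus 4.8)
First I would read off the behaviour of the profiles $(\Gamma,V,\Theta,\Sigma,U)$ at $\xi=0$, then transport it through the substitutions \eqref{eq:CAPtoBAR}, \eqref{eq:BARtoTIL}, \eqref{eq:pqrdef}, and finally pin down the direction of approach to $M_0$ by a leading–order analysis of \eqref{eq:slow}.

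\textbf{Step 1 (the profile at $\xi=0$).} Evaluating \eqref{eq:ss-odes}$_3$, \eqref{eq:ss-odes}$_4$ and \eqref{eqnsigma} at $\xi=0$ produces \eqref{compatibility}, in particular $U_0=a\Gamma_0$ and $c\Theta_0=\Sigma_0U_0$. Differentiating \eqref{eq:ss-odes}$_4$ at $\xi=0$ and using $U'(0)=0$ together with $a+\lambda>0$ gives $\Gamma'(0)=0$; differentiating \eqref{eqnsigma} and then \eqref{eq:ss-odes}$_3$ at $\xi=0$, and using $c+\lambda>0$ together with $\alpha,\Sigma_0,\Theta_0,U_0>0$, forces $\Theta'(0)=0$, whence $\Sigma'(0)=0$ from the differentiated \eqref{eqnsigma}; finally \eqref{eq:ss-odes}$_2$ at $\xi=0$ reads $bV_0=\Sigma'(0)=0$, so $V_0=0$ because $b>0$. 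This yields \eqref{eq:bdry0} and \eqref{eq:ic}. Smoothness together with the vanishing of these first derivatives (and $V(0)=0$, $V'(0)=U_0$, $V''(0)=U'(0)=0$) gives, by Taylor's theorem,
\[
\Gamma(\xi)=\Gamma_0+O(\xi^2),\quad U(\xi)=U_0+O(\xi^2),\quad \Theta(\xi)=\Theta_0+O(\xi^2),\quad \Sigma(\xi)=\Sigma_0+O(\xi^2),\quad V(\xi)=U_0\xi+O(\xi^3).
\]

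\textbf{Step 2 (closed form of the orbit; convergence to $M_0$).} Composing \eqref{eq:CAPtoBAR}, \eqref{eq:BARtoTIL}, \eqref{eq:pqrdef} and using the identities $a_1-d_1=2$, $b_1+1=a_1$, $b_1-d_1=1$, $a_1+d_1=c_1$, which follow directly from \eqref{eq:exponents}, \eqref{defD}, all powers of $\xi$ cancel and, with $\xi=e^\eta$,
\[
p=\xi^{2}\,\frac{\Gamma(\xi)}{\Sigma(\xi)},\qquad q=b\,\xi\,\frac{V(\xi)}{\Sigma(\xi)},\qquad r=\frac{U(\xi)}{\Gamma(\xi)},\qquad s=\frac{\Sigma(\xi)\Gamma(\xi)}{\Theta(\xi)}.
\]
Inserting Step 1 and the compatibility relations gives $p,q\to0$, $r\to U_0/\Gamma_0=a=r_0$ and $s\to\Sigma_0\Gamma_0/\Theta_0=c/a$ as $\eta\to-\infty$; since $r_0 s_0=c$ and $r_0=a$ one has $c/a=s_0$, hence $\chi(\eta)\to(0,0,r_0,s_0)=M_0$. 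Moreover $r'(0)=s'(0)=0$, so $r(\xi)-r_0$ and $s(\xi)-s_0$ are $O(\xi^2)$, and thus $|\chi(\eta)-M_0|=O(e^{2\eta})$.

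\textbf{Step 3 (tangency to $X_{01}$).} By Step 2 the vector $\phi(\eta):=e^{-2\eta}\big(\chi(\eta)-M_0\big)$ converges, as $\eta\to-\infty$, to $\vec\kappa=\big(\tfrac{\Gamma_0}{\Sigma_0},\,b\tfrac{U_0}{\Sigma_0},\,\kappa_r,\,\kappa_s\big)$, where $\kappa_r,\kappa_s$ are the $\xi^2$–coefficients of $U/\Gamma$ and $\Sigma\Gamma/\Theta$ at $\xi=0$; note $\vec\kappa\neq0$ since $\Gamma_0/\Sigma_0>0$. Writing \eqref{eq:slow} as $\dot\chi=DF(M_0)(\chi-M_0)+R(\chi)$ with $R(\chi)=O(|\chi-M_0|^2)=O(e^{4\eta})$, we get $\dot\phi=\big(DF(M_0)-2I\big)\phi+e^{-2\eta}R(\chi)$ with $e^{-2\eta}R(\chi)=O(e^{2\eta})\to0$, so $\dot\phi$ converges to $\big(DF(M_0)-2I\big)\vec\kappa$. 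Since $\phi$ itself converges, necessarily $\dot\phi\to0$, hence $\big(DF(M_0)-2I\big)\vec\kappa=0$, i.e. $\vec\kappa$ is an eigenvector of $DF(M_0)$ for the eigenvalue $2=\mu_{01}$. For $n$ small this eigenvalue is simple (the others being $1$, $\mu_{04}<0$ and $\mu_{03}=O(1/n)\neq2$), so $\vec\kappa$ is a scalar multiple of $X_{01}$; in the normalization of $X_{01}$ with positive $p$–component the scalar is $\kappa>0$, which is \eqref{eq:alpha}.

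\textbf{Main obstacle.} The bulk of the work is verifying the exponent identities of Step 2 so that $p,q,r,s$ reduce to the stated ratios of profiles, and checking through \eqref{compatibility} that the limits land exactly on $(0,0,r_0,s_0)$ — the one nontrivial arithmetic point being $\Sigma_0\Gamma_0/\Theta_0=c/a=s_0$. The single structural ingredient is that $\chi$ cannot approach $M_0$ along the slow eigendirection $X_{02}$ (eigenvalue $1$, rate $e^{\eta}$); this is handed to us by the bound $|\chi(\eta)-M_0|=O(e^{2\eta})$ coming from the explicit formulas, after which the leading–order matching in Step 3 fixes the direction as $X_{01}$.
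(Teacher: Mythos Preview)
Your proof is correct. Steps 1 and 2 coincide with the paper's argument: both derive \eqref{compatibility}, \eqref{eq:bdry0}, \eqref{eq:ic} from smoothness at $\xi=0$, and both unwind the transformations to the closed forms $p=\xi^2\Gamma/\Sigma$, $q=b\xi V/\Sigma$, $r=U/\Gamma$, $s=\Sigma\Gamma/\Theta$, landing on $M_0$ via the compatibility relations.

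Step 3 is where you diverge from the paper. The paper differentiates \eqref{eq:ss-odes} twice at $\xi=0$, solves a small linear system for $(U/\Gamma)''(0)$ and $(\Sigma\Gamma/\Theta)''(0)$, and then checks component by component that the $\xi^2$–coefficient of $\chi(\log\xi)-M_0$ equals $\tfrac{\Gamma_0}{\Sigma_0}X_{01}$, appealing to the explicit formula for $X_{01}$ in the appendix. Your argument sidesteps this computation: once $|\chi-M_0|=O(e^{2\eta})$ is known, the linearized flow forces the limit $\vec\kappa$ of $e^{-2\eta}(\chi-M_0)$ to lie in $\ker(DF(M_0)-2I)$, and simplicity of $\mu_{01}=2$ pins it to $X_{01}$. (The step ``$\phi$ converges and $\dot\phi$ converges $\Rightarrow\dot\phi\to0$'' is indeed valid and is the crux.) Your route is cleaner and avoids ever writing down the eigenvector; the trade-off is the simplicity hypothesis on $\mu_{01}$, which holds for $n$ small but which the paper's explicit matching does not need. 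The paper's computation also yields the value $\kappa=\Gamma_0/\Sigma_0$ and the signs of the second derivatives (Remark \ref{rem:signs}), information used later in Proposition \ref{prop:ss}; your argument gives $\kappa>0$ from the $p$–component but not these by-products.
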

}

\begin{remark} \label{rem:alpha}
The orbit approaches $M_0$ tangent to $X_{01}$ as $\eta \rightarrow -\infty$. Since $M_0$ has a three-dimensional unstable manifold and $\mu_{02}(=1)<\mu_{01}(=2)<\mu_{03}(=\BO(\frac{1}{n}))$, 
  the orbits emanating from $M_0$ tangent to $X_{01}$ all lie on a two dimensional manifold that at $M_0$ is tangent to the plane spanned by $X_{01}$ and $X_{03}$. This two dimensional submanifold will be referred to as the Strongly unstable manifold of $M_0$.
\end{remark}

\begin{proof}
Assuming smoothness and boundedness of $\big(V, \Sigma, \Theta, \Gamma \big)$ and $U$ in a neighborhood of $\xi=0$ and the conditions \eqref{bc1}, \eqref{eq:ic0} we deduce first \eqref{compatibility}, \eqref{eq:bdry0}, \eqref{eq:ic} by the argument of
section \ref{sec:scale}. The derivatives of $\big(\Gamma,V,\Theta,\Sigma,U\big)$ at $\xi=0$ are obtained by differentiating the system \eqref{eq:ss-odes} repeatedly.
Re-write \eqref{eq:ss-odes} as
\begin{align*}
  a + \lambda\xi\frac{\Gamma'}{\Gamma} &= \frac{U}{\Gamma}, &
  c + \lambda\xi\frac{\Theta'}{\Theta} &= \frac{\Sigma\Gamma}{\Theta} \frac{U}{\Gamma},\\
  (b+\lambda)U  + \lambda \xi U'(\xi) &= \Sigma^{''} = \Big(\frac{\Sigma\Gamma}{\Theta} \frac{\Theta}{\Gamma}\Big)^{''}, &
  \frac{\Big(\frac{\Sigma\Gamma}{\Theta}\Big)^{''}}{\frac{\Sigma\Gamma}{\Theta}} &= (1+m+n)\frac{\Gamma^{''}}{\Gamma}-(1+\alpha) \frac{\Theta^{''}}{\Theta} + n \frac{ \big(\frac{U}{\Gamma}\big)^{''}}{\frac{U}{\Gamma}} \, ,
\end{align*}
from where after a computation we conclude
\begin{align*}
&\frac{U}{\Gamma}(0) = a = r_0,  & \Big(\frac{U}{\Gamma}\Big)'(0)&=0, & \Big(\frac{U}{\Gamma}\Big)^{''}(0) &= \frac{\Gamma(0)}{\Sigma(0)} \frac{-2(b+\lambda)r_0}{\frac{1-s_0}{\lambda}-\frac{n}{r_0}\Big(\frac{2}{s_0} + \frac{r_0}{\lambda}\Big)\left(\frac{ \frac{1}{\lambda}+2}{ \frac{1+\alpha}{\lambda}r_0 + \frac{2}{s_0}}\right)},\\
&\frac{\Sigma\Gamma}{\Theta}(0) = \frac{c}{a} = s_0,  & \Big(\frac{\Sigma\Gamma}{\Theta}\Big)'(0)&=0, &
\Big(\frac{\Sigma\Gamma}{\Theta}\Big)^{''}(0) &= \frac{n}{r_0} \left(\frac{ \frac{1}{\lambda}+2 }{ \frac{1+\alpha}{\lambda}r_0 + \frac{2}{s_0}}\right)\Big(\frac{U}{\Gamma}\Big)^{''}(0).
\end{align*}
The Taylor expansions of $p(\log\xi)$, $q(\log\xi)$, $r(\log\xi)$ and $s(\log\xi)$ at $\xi=0$ are computed using \eqref{eq:CAPtoBAR}, \eqref{eq:pqrdef}, \eqref{eq:bdry0}
and the relations above,
\begin{align*}
 p(\log\xi) &= \frac{ \bg }{\bs}  = \frac{ \xi^{a_1} \Gamma(\xi)}{\xi^{d_1} \Sigma(\xi)} = \xi^2\frac{\Gamma(\xi)}{\Sigma(\xi)} = \xi^2\frac{\Gamma(0)}{\Sigma(0)} + o(\xi^2) \, , \\
 q(\log\xi) &= b\frac{\bv}{\bs} = b\frac{ \xi^{b_1} V(\xi) }{ \xi^{d_1} \Sigma(\xi)} = b\xi\frac{ V(\xi) }{ \Sigma(\xi)} = b\xi^2 \frac{U(0)}{\Sigma(0)}+ o(\xi^2)=\xi^2 ~br_0\frac{\Gamma(0)}{\Sigma(0)} + o(\xi^2) \, ,\\
 r(\log\xi) &= \frac{\bu}{ \bg } = \frac{ \xi^{1+b_1}U(\xi) }{ \xi^{a_1}\Gamma(\xi) } = \frac{ U(0) }{ \Gamma(0) }+ \xi \Big(\frac{U}{\Gamma}\Big)'(0) + \frac{1}{2}\xi^2\Big(\frac{U}{\Gamma}\Big)^{''}(0) + o(\xi^2)\\
  &=\frac{ U }{ \Gamma }(0) + \xi^2\frac{\Gamma(0)}{\Sigma(0)} \frac{-(b+\lambda)r_0}{\frac{1-s_0}{\lambda}-\frac{n}{r_0}\Big(\frac{2}{s_0} + \frac{r_0}{\lambda}\Big)\left(\frac{ \frac{1}{\lambda}+2}{ \frac{1+\alpha}{\lambda}r_0 + \frac{2}{s_0}}\right)} ,
  \\
 s(\log\xi) &= \frac{\bs\bg}{\bth} = \frac{ \xi^{a_1+d_1}\Sigma(\xi)\Gamma(\xi) }{\xi^{c_1} \Theta(\xi)} = \frac{ \Sigma\Gamma }{\Theta}(0) + \xi \Big(\frac{ \Sigma\Gamma }{\Theta}\Big)^{'}(0) + \frac{1}{2}\xi^2\Big(\frac{ \Sigma\Gamma }{\Theta}\Big)^{''}(0) + o(\xi^2)\\
 &=\frac{ \Sigma\Gamma }{\Theta}(0) + \xi^2 n \left(\frac{ \big(\frac{1}{\lambda}+2\big) \frac{1}{r_0} }{ \frac{1+\alpha}{\lambda}r_0 + \frac{2}{s_0}}\right)\frac{\Gamma(0)}{\Sigma(0)} \frac{-(b+\lambda)r_0}{\frac{1-s_0}{\lambda}-\frac{n}{r_0}\Big(\frac{2}{s_0} + \frac{r_0}{\lambda}\Big)\left(\frac{ \frac{1}{\lambda}+2}{ \frac{1+\alpha}{\lambda}r_0 + \frac{2}{s_0}}\right)}+ o(\xi^2).
\end{align*}
Therefore, taking note of the eigenvector $X_{01}$ in Appendix \ref{append:lin}, we conclude
\begin{align*}
\chi(\log\xi)-M_0  = \big(p(\log\xi),q(\log\xi),r(\log\xi),s(\log\xi)\big) -M_0 =  \frac{\Gamma(0)}{\Sigma(0)}\xi^2 X_{01} + o(\xi^2),
\end{align*}
which implies \eqref{eq:alpha} since $\eta=\log\xi \to -\infty$ as $\xi \to 0$.
\end{proof}
\begin{remark} \label{rem:signs}
For $n$ small enough, we find that second derivatives have definite signs: $\displaystyle \Big(\frac{U}{\Gamma}\Big)^{''}(0) <0$, $\displaystyle \Big(\frac{\Sigma\Gamma}{\Theta}\Big)^{''}(0) <0$, and
\begin{equation} \label{eq:second_der}
\begin{aligned}
\frac{\Gamma^{''}(0)}{\Gamma(0)} &= \frac{1}{2\lambda}\Big(\frac{U}{\Gamma}\Big)^{''}(0) < 0, &
\frac{\Theta^{''}(0)}{\Theta(0)} &= \frac{1}{2\lambda}\Big(s_0\Big(\frac{U}{\Gamma}\Big)^{''}(0) + r_0\Big(\frac{\Sigma\Gamma}{\Theta}\Big)^{''}(0)\Big)\Big)  < 0,\\
\frac{U^{''}(0)}{U(0)} &=\frac{\Gamma^{''}(0)}{\Gamma(0)} + \frac{ \big(\frac{U}{\Gamma}\big)^{''}(0)}{\frac{U}{\Gamma}(0)}< 0,&
\Sigma^{''}(0)&=(b+\lambda)U(0)>0.
\end{aligned}
\end{equation}
\end{remark}

\subsection{A heteroclinic orbit}
The behavior of \eqref{eq:ss-odes} near the singular point  $\xi =0$ suggests to look for an orbit of \eqref{eq:slow} emanating from $M_0$ in the direction of the Strongly unstable manifold.
At the other end point, as $\eta \rightarrow \infty$, we expect that the variables $p$, $q$, $r$ and $s$
equilibrate to a bounded state as $\eta \rightarrow \infty$.
This would imply that the variables $\tg$, $\tv$, $\tth$, and $\ts$ grow at most exponentially in $\eta$ as seen from \eqref{eq:tildesys2} and in turn polynomially in $\xi$.
Therefore, we look for a heteroclinic orbit joining $M_0$ with $M_1$, the only two equilibria in the sector $r>0$ and $s>0$.
That is we expect $\chi(\eta) \rightarrow M_1$ as $\eta \rightarrow \infty$.
The heteroclinic orbit will be called $\chi(\eta)$ for the rest of this paper. It will be constructed in the next section.

The two end point behaviors can be interpreted geometrically as well: The end point behavior as $\eta \rightarrow -\infty$ specifies a nontrivial submanifold of the unstable manifold
of the equilibrium  $M_0$ from which the  orbit emanates. This submanifold will turn out to intersect the stable manifold of $M_1$ and the intersection of these two manifold is the heteroclinic orbit we search for.

$U_0$, $\Gamma_0$ are selected.

Given the heteroclinic $\chi(\eta)$, this subsection is devoted to adapting the initial data. By  data we refer to
$\big(\Gamma_0,\Theta_0,\Sigma_0,U_0 \big)$.

\subsection{Adapting the initial data } \label{sec:twoparam}
Suppose now that an orbit $\chi(\eta)$ has been constructed that satisfies \eqref{eq:slow}, it emanates from $M_0$ in the Strongly unstable manifold, i.e. it satisfies \eqref{eq:alpha},
and connects to $M_1$. The orbit corresponds to a set of parameters $(\lambda, \alpha, m,n)$. We proceed to show how the data input $\big(V_0,\Gamma_0,\Theta_0,\Sigma_0,U_0 \big)$ fit under the supposed orbit $\chi^{\lambda, \alpha, m,n}(\eta)$. From \eqref{eq:ic}, \eqref{compatibility} we know that $V_0=0$ and
$$\Theta_0 = c^{-\frac{1}{1+\alpha}}\Gamma_0^{\frac{m}{1+\alpha}} U_0^{\frac{1+n}{1+\alpha}}, \quad \Sigma_0 = c^{\frac{\alpha}{1+\alpha}}\Gamma_0^{\frac{m}{1+\alpha}} U_0^{-\frac{\alpha-n}{1+\alpha}}.$$
By \eqref{rate}, the rate of growth $\lambda$ determines the ratio $\frac{U_0}{\Gamma_0}$
\begin{equation} \label{eq:lambda}
 \lambda = \Big(\frac{U_0}{\Gamma_0} - \frac{2(1+\alpha)-n}{D}\Big)\frac{D}{2(1+\alpha)}.
\end{equation}
Note finally that the restriction \eqref{eq:lambda-range} on the growth rate $\lambda$ implies a restriction on the ratio
\begin{equation} \label{eq:restriction}
\begin{aligned}
 \frac{2(1+\alpha) -n}{D} < \frac{U_0}{\Gamma_0} &< \frac{2(1+\alpha) -n}{D} + \frac{4(1+\alpha)(\alpha-m-n)(1+m)}{D(1+m+n)^2}\\
 &=\frac{2(1+\alpha)}{1+m+n} -\frac{n}{D}\left( \frac{4(1+\alpha)(\alpha-m-n)}{(1+m+n)^2} +1\right).
\end{aligned}
\end{equation}

It remains to resolve only one degree of freedom. The orbit $\chi (\eta)$ emanating from $M_0$ in the direction $X_{01}$ satisfies the asymptotic expansion
 \begin{equation}\label{eq:alpha-expan}
  \chi (\eta) - M_0 = \kappa_1 e^{\mu_{01}\eta} X_{01} + \kappa_3 e^{\mu_{03}\eta} X_{03} + \text{higher-order terms as $\eta \rightarrow -\infty$},
 \end{equation}
for two constants $\kappa_1$ and $\kappa_3$.
Any reparametrization $\chi(\eta-\eta_0)$, $\eta_0\in \mathbb{R}$, depicts the same heteroclinic orbit. Define
$\bar \chi (\eta) = \chi (\eta - \eta_0)$ and we proceed to select $\eta_0$ so as to satisfy the data. Then,
$$
\lim_{\eta \rightarrow -\infty}\big( \bar\chi(\eta) - M_0\big)e^{-2 \eta}
= \lim_{\eta \rightarrow -\infty}  \Big ( \big(\chi(\eta-\eta_0) - M_0\big)e^{-2(\eta-\eta_0)} \Big ) e^{-2\eta_0} = e^{ -2\eta_0} \kappa_1 X_{01}.
$$
On the other hand, from the proof of Proposition \ref{prop1},
$$
\lim_{\eta \rightarrow -\infty}\big( \bar\chi(\eta) - M_0\big)e^{ -2\eta}  = \frac{\Gamma_0}{\Sigma_0} X_{01} \, ,
$$
which dictates we fix the last degree of freedom by selecting
\begin{equation}
 \eta_0 = \frac{1}{2}\log \left( \frac{\Gamma_0}{\Sigma_0} \kappa_1\right).
\end{equation}

\section{Existence via Geometric theory of singular perturbations} \label{sec:proof}
This section is devoted to proving the existence of a heteroclinic orbit $\chi(\eta)$ with limiting behavior as determined in  section \ref{sec:char}.

\begin{theorem} \label{thm1}
Let $(\alpha,m,n)$ take values in the range \eqref{eq:paramrange}.
Given $\lambda > 0$ satisfying \eqref{eq:lambda-range} with $n=0$, there is
$n_0( \lambda,\alpha,m)$ such that for $n \in [0, n_0)$ and  $\lambda$ satisfying \eqref{eq:lambda-range} the
system \eqref{eq:slow}  admits a heteroclinic orbit $\chi^{\lambda,\alpha,m,n}(\eta)$ joining the equilibrium $M_0^{\lambda,\alpha,m,n}$ to the equilibrium $M_1^{\lambda,\alpha,m,n}$ and satisfying the property
    \begin{align}
     \label{eq:rapid}
        e^{-2\eta}\big(\chi^{\lambda,\alpha,m,n}(\eta) - M_0^{\lambda,\alpha,m,n}\big) \rightarrow \kappa X_{01}^{\lambda,\alpha,m,n} \quad \text{as $\eta \rightarrow -\infty$ for some $\kappa\ne0$}.
    \end{align}
\end{theorem}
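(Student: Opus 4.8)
The plan is to treat \eqref{eq:slow} as a slow--fast system with the small parameter $n$, the variable $r$ being the fast one in view of $\eqref{eq:slow}_3$; to build the heteroclinic at $n=0$ by a phase--plane argument on a reduced planar system, and then to perturb it to $n>0$ small by geometric singular perturbation theory \cite{fenichel_geometric_1979,Jones_1995} together with smooth persistence of the strong unstable manifold of $M_0$, in the spirit of \cite{Sz1991} and extending the simpler construction of \cite{LT16}. Putting $n=0$ in $\eqref{eq:slow}_3$ and discarding $r=0$, the critical manifold is the graph $\mathcal{C}_0=\{r=R_0(p,q,s)\}$, where $R_0$ solves the (linear in $r$) equation $\frac{\alpha-m}{\lambda(1+\alpha)}(r-a)+\lambda pr+q+\frac{\alpha}{\lambda}r\bigl(s-\frac{1+m}{1+\alpha}\bigr)=0$ with $a$ evaluated at $n=0$. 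On $\mathcal{C}_0$ the normal ($r$) linearization of the layer problem equals $r\bigl(\frac{\alpha-m}{\lambda(1+\alpha)}+\lambda p+\frac{\alpha}{\lambda}(s-\frac{1+m}{1+\alpha})\bigr)$, which is strictly positive throughout the region where the heteroclinic lives (using $\alpha>m$ from \eqref{eq:paramrange}); hence $\mathcal{C}_0$ is normally hyperbolic and repelling there, and Fenichel's theorem provides, for $n>0$ small, a locally invariant slow manifold $\mathcal{C}_n=\{r=R_n(p,q,s)\}$, $O(n)$--close to $\mathcal{C}_0$ and carrying a slow flow $O(n)$--close to the reduced $(p,q,s)$--flow obtained by setting $r=R_0(p,q,s)$ in the first, second and fourth equations of \eqref{eq:slow}. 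Both $M_0$ and $M_1$ lie on $\mathcal{C}_n$ (for $n=0$, on $\mathcal{C}_0$, by a direct check), and their restricted linearizations carry the eigenvalues $\{\mu_{01},\mu_{02},\mu_{04}\}$ and $\{\mu_{11},\mu_{12},\mu_{14}\}$; in particular $M_0$ is a saddle within $\mathcal{C}_n$ (two unstable, one stable direction) while $M_1$ is a sink within $\mathcal{C}_n$.

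A direct computation gives that on $\mathcal{C}_0$ one has $\dot s=-\frac{1+\alpha}{\lambda}\,r s\bigl(s-\frac{1+m}{1+\alpha}\bigr)$, so $\mathcal{N}_0:=\mathcal{C}_0\cap\{s=\frac{1+m}{1+\alpha}\}$ is invariant for the reduced flow, is attracting within $\mathcal{C}_0$, and contains both $M_0$ and $M_1$. On $\mathcal{N}_0$ the reduced flow is planar in $(p,q)$ with $r=R_0(p,q):=R_0\bigl(p,q,\tfrac{1+m}{1+\alpha}\bigr)$: the line $\{p=0\}$ is invariant (and carries the ``wrong'' heteroclinic, issued from $M_0$ along the weaker eigenvector $\mu_{02}=1$), $M_0=(0,0)$ is a source with eigenvalues $2>1$, and $M_1=(0,1)$ is a sink with eigenvalues $\mu_{11}<0$ and $\mu_{12}=-1$. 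Since $X_{01}$ is tangent to $\mathcal{N}_0$ at $n=0$, the strong unstable manifold $W^{uu}(M_0)$ (tangent to the eigenvector of the larger eigenvalue $\mu_{01}=2$) reduces, at $n=0$, to the one--dimensional strong unstable orbit $\gamma_0$ of the source $M_0$ in $\mathcal{N}_0$ leaving into $p>0$. One then shows $\gamma_0$ converges to $M_1$: one exhibits a bounded forward--invariant region inside $\{p\ge0,\ 0\le q<q^\ast\}$, with $q^\ast=\frac{\alpha-m}{\lambda(1+\alpha)}\,a>1$ (equivalent to $r_1>0$, i.e.\ to \eqref{eq:lambda-range}) so that $r=R_0(p,q)>0$ there, which contains $\gamma_0$, whose only equilibrium is the sink $M_1$, and one excludes periodic orbits by a Bendixson--Dulac criterion; Poincar\'e--Bendixson then forces $\omega(\gamma_0)=M_1$. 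This is the analogue for \eqref{eq:slow} of the phase--plane analyses of \cite{KOT14,LT16}.

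For $n>0$ small, standard invariant--manifold theory gives $W^{uu}(M_0)$ as a $C^1$ manifold depending smoothly on $n$ (the gap $0<\mu_{02}<\mu_{01}<\mu_{03}$ makes the strong unstable manifold well defined); it is transverse to the fast fibre at $M_0$, so $\gamma_n:=W^{uu}(M_0)\cap\mathcal{C}_n$ is a one--dimensional orbit tangent to $X_{01}^{\lambda,\alpha,m,n}$. Equivalently $\gamma_n$ is the strong unstable orbit of the saddle $M_0$ within the slow flow on $\mathcal{C}_n$, hence $C^1$--$O(n)$--close to $\gamma_0$ on compact time intervals. Because $\mu_{01}=2$ while $\mu_{03}=\BO(1/n)$, the $X_{03}$--component of $\gamma_n$ decays like $e^{\mu_{03}\eta}=\lio(e^{2\eta})$ as $\eta\to-\infty$, so $\chi(\eta)-M_0=\kappa\,e^{2\eta}X_{01}+\lio(e^{2\eta})$ with $\kappa\neq0$, which is \eqref{eq:rapid}. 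Finally, by linearizing at $M_1$ (noting that, by Fenichel, $W^s(M_1)$ coincides with $\mathcal{C}_n$ near $M_1$ since $M_1$ attracts within $\mathcal{C}_n$) one fixes a ball $B$ around $M_1$, uniform in $n$, from which all forward orbits converge to $M_1$; by the previous step $\gamma_0$ enters $B$ at some finite $\eta=\eta^\ast$, and since $\gamma_n$ is $O(n)$--close to $\gamma_0$ on the compact interval between the exit from a fixed neighborhood of $M_0$ and $\eta^\ast$, one chooses $n_0(\lambda,\alpha,m)$ so small that for $n\in[0,n_0)$ the orbit $\gamma_n$ enters $B$ and hence converges to $M_1$. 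This $\gamma_n$ is the asserted heteroclinic $\chi^{\lambda,\alpha,m,n}$.

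The main obstacle is the planar step: constructing the forward--invariant region keeping $r>0$, proving it captures the strong unstable orbit of the source $M_0$, and ruling out other equilibria and periodic orbits, so that $\omega(\gamma_0)=M_1$. The remaining difficulty, absent in \cite{LT16} and specific to the four--dimensional setting, is the bookkeeping in the persistence step: identifying $\gamma_n$ with $W^{uu}(M_0)\cap\mathcal{C}_n$, matching the direction $X_{01}$ so that \eqref{eq:rapid} holds, and propagating the closeness of $\gamma_n$ to $\gamma_0$ over the (a priori only finite) time needed to reach $B$ --- which is where the finer structure carried by the inner submanifold $\mathcal{N}_0$ and the persistence of the unstable and stable manifolds of $M_0$ and $M_1$ are used.
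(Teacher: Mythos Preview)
Your approach is essentially the paper's: the fast--slow structure with $r$ fast, the normally hyperbolic critical manifold $r=\hat r(p,q,s)$, the key observation that $\dot s=-\frac{1+\alpha}{\lambda}rs\bigl(s-\frac{1+m}{1+\alpha}\bigr)$ makes $\{s=\frac{1+m}{1+\alpha}\}$ invariant and attracting in the reduced flow, and the Poincar\'e--Bendixson argument on the resulting planar $(p,q)$--system are all exactly as in the paper (your $q^\ast$ condition is equivalent to the paper's triangle $T$ bounded by a level set of $\hat r$; the paper excludes periodic orbits by the index argument rather than Bendixson--Dulac, but this is cosmetic).

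The one substantive packaging difference is in the persistence step. The paper does not argue directly via closeness of $\gamma_n$ to $\gamma_0$ on compacts plus a uniform attraction ball; instead it invokes Szmolyan's transversality framework. It first shows (Proposition~\ref{prop:singular}) that at $n=0$ the one--dimensional $\mathcal F^u_{M_0}$ meets the three--dimensional $W^s(M_1)$ transversally in $pqs$--space (trivially, since $W^s(M_1)$ is open there), and then (Lemma~\ref{lem:rapid}) obtains the persistence of $\mathcal F^{u,n}_{M_0^n}$ not from $W^{uu}(M_0)\cap\mathcal C_n$ as you do, but from Fenichel's foliation theorem applied to the overflowing segment $\hat{\mathcal N}^0\subset\{p=0\}$ of the $q$--axis, checking the rate conditions $(A_r)$, $(B_{r'})$ there. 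Your identification $\gamma_n=W^{uu}(M_0)\cap\mathcal C_n$ (equivalently, the strong unstable orbit of $M_0$ within the slow flow) together with ``enter a uniform basin of $M_1$ in finite time'' is a more elementary and direct route to the same conclusion; the paper's route is more systematic and fits the general saddle--to--saddle machinery of \cite{Sz1991}, which is why it needs the extra foliation bookkeeping. Both are correct, and your final paragraph accurately flags the genuine work that remains (the planar invariant region keeping $r>0$, and the compactness needed so that $\gamma_n$ stays on the locally invariant $\mathcal C_n$ until it enters the basin).
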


The heteroclinic orbit $\chi^{\lambda,\alpha,m,n}(\eta)$ is achieved by applying the {\it geometric singular perturbation theory}. The presence of the small parameter $n>0$ in the left-hand-side of \eqref{eq:slow}$_3$ provides a {\it fast-slow} structure to the system, having $r$ as a fast variable and the rest as slow variables.
In the interest of the reader, we present some preliminary information. Experts on geometric singular
perturbation theory may wish to proceed directly to Sections \ref{sec:singorb}, \ref{sec:thmproof}.

Recall that \eqref{eq:slow} accounts for a family of dynamical systems parametrized by $(\lambda,\alpha,m,n)$; the heteroclinic orbit will be achieved respectively for each admissible $(\lambda,\alpha,m,n)$. To simplify notations we suppress the dependence on $\lambda$, $\alpha$, and $m$ but retain
the dependence on $n$.

\subsection{Invariant manifold theory and geometric singular perturbation theory}\label{sec:singpert}
We state here some rudiments of the geometric singular perturbation theory from \cite{fenichel_asymptotic_1977,fenichel_geometric_1979}. 
Fenichel's persistence theorem is developed in \cite[Theorem 9.1]{fenichel_geometric_1979}. In the present application the versions \cite[Theorem 2.2]{Sz1991} and \cite[Theorem 3.1]{Sz1991} are applied.

Let $X$ be a $C^{r}$ vector field in $\mathbb{R}^d$ with $r\ge2$ and let $\bar{\Lambda}=\Lambda \cup \partial \Lambda$ be a compact, connected $C^{r+1}$ manifold in $\mathbb{R}^d$. $F^t: \mathbb{R}^d \mapsto \mathbb{R}^d$ denotes the time $t$-map associated with the vector field $X$ and $DF^t$ denotes its differential.
$\bar{\Lambda}$ is said to be {\it overflowing invariant} under $X$ if for every $m\in\bar{\Lambda}$ and $t\le0$, $F^t(m)\in \bar{\Lambda}$ and $X$ is pointing strictly outward on $\partial \Lambda$. $T \mathbb{R}^d|\bar\Lambda$ denotes the tangent bundle of $\mathbb{R}^d$ along $\bar\Lambda$ and $T\bar\Lambda$ denotes the tangent bundle of $\bar\Lambda$. A subbundle $E\subset T\mathbb{R}^d|\bar{\Lambda}$ is said to be negatively invariant if $E \supset DF^t(E)$ for all $t\le0$.

Let $E\subset T\mathbb{R}^d|\bar{\Lambda}$ be a subbundle that is negatively invariant and contains $T\bar\Lambda$. Given such $E$,  $T \mathbb{R}^d| \bar\Lambda$ then splits into $T\mathbb{R}^d|\bar{\Lambda} =E\oplus E'= T\bar\Lambda\oplus N\oplus E'$, where $N\subset E$ is any complement of $T\bar\Lambda$ in $E$ and $E'\subset T\mathbb{R}^d|\bar{\Lambda}$ is any complement of $E$ in $T\mathbb{R}^d|\bar{\Lambda}$.
\tcb{ Next, the subundles are distinguished according to the growth or decay rates of the linearized flow as $t \to -\infty$,
%
following  \cite{fenichel_geometric_1979}:}
Let $m\in \bar{\Lambda}$ and $v^0 \in T_m \Lambda$; $w^0\in N_m$; $x^0\in E'_m$; $v^t = DF^t(m)v^0$; $w^t = \pi^N DF^t(m)w^0$; $x^t = \pi^{E'}DF^t(m)x^0$,
where $\pi^N$ and $\pi^{E'}$ are bundle projections onto $N$ and $E'$ respectively. Define
\begin{align*}
 \nu^s(m) &\triangleq \inf \Big\{\nu>0 \: : \: \frac{1}{|x^{-t}|} = o(\nu^t) \quad \text{as $t \rightarrow \infty$} \quad \forall x^0\in E'_m\Big\}.
 \end{align*}
If  $\nu^s(m)<1$, define
\begin{align*}
 \sigma^s(m) &\triangleq \inf \Big\{\sigma>0 \: : \: |v^{-t}| = o(|x^{-t}|^\sigma) \quad \text{as $t \rightarrow \infty$} \quad \forall x^0\in E'_m, v^0\in T_m\Lambda\Big\}.
 \end{align*}
 Next, define
 \begin{align*}
 \alpha^u(m) &\triangleq \inf \Big\{\alpha>0 \: : \: |w^{-t}| = o(\alpha^t) \quad \text{as $t \rightarrow \infty$}\quad \forall w^0\in N_m\Big\}.
 \end{align*}
If $\alpha^u(m)<1$, define
\begin{align*}
 \rho^u(m) &\triangleq \inf \Big\{\rho>0 \: : \: \frac{|w^{-t}|}{|v^{-t}|} = o(\rho^t) \quad \text{as $t \rightarrow \infty$} \quad \forall w^0\in N_m, v^0\in T_m\Lambda\Big\}.
 \end{align*}
If $\rho^u(m)<1$, define
\begin{align*}
 \tau^u(m) &\triangleq \inf \Big\{\tau>0 \: : \: |\hat{v}^{-t}| = o\left(\Big(\frac{|v^{-t}|}{|w^{-t}|}\Big)^{\tau}\right) \quad \text{as $t \rightarrow \infty$} \quad \forall w^0\in N_m, v^0\in T_m\Lambda,\hat{v}^0\in T_m\Lambda\Big\}.
\end{align*}

\begin{definition} \label{def:over}
Let $\bar{\Lambda}=\Lambda \cup \partial\Lambda$  be an overflowing invariant manifold {\blue such that $T \mathbb{R}^d|\bar\Lambda$} admits a splitting by $E$ as described  above. We say an overflowing invariant manifold $\bar{\Lambda}$ satisfies assumptions \eqref{eq:A} and \eqref{eq:B}, $r'\le r-1$ if for all $m\in \bar{\Lambda}$ the  growth rates hold
\begin{align}
\nu^s(m)&<1, \quad \sigma^s(m)<\frac{1}{r}, \label{eq:A}\tag{$A_r$}   \\
\alpha^u(m)&<1, \quad \rho^u(m)<1, \quad \tau^u(m)<\frac{1}{r'}. \label{eq:B}\tag{$B_{r'}$}
\end{align}
\end{definition}
\begin{remark}
\tcb{
Given the bundle splitting, the conditions  \eqref{eq:A} and \eqref{eq:B} suffice to construct the unstable manifold of $\bar\Lambda$
as well as the finer foliation structure within it; see \cite[Theorem 4]{fenichel_asymptotic_1974} and \cite[Theorem 3]{fenichel_asymptotic_1977}. Moreover, for the special case $E=T\bar\Lambda$, Fenichel  in \cite{fenichel_persistence_1972}  proved the persistence of the overflowing manifold $\bar\Lambda$ of $X_0$  when only \eqref{eq:A} is assumed.

Next, suppose that a $C^r$ family of vector fields $X_\epsilon$ is given depending on a small parameter for $\epsilon\in[-\epsilon_0,\epsilon_0]$ is given.
If $\bar\Lambda_\epsilon$ exists as a $C^r$ family of overflowing invariant manifolds for each sufficiently small $\epsilon$,
then the unstable manifold and its foliation structure are persistent in an appropriate sense; see section 16 of \cite{fenichel_geometric_1979}, and the discussion in \cite[p. 90]{fenichel_geometric_1979}.
}
\end{remark}

Next, we introduce the notion of normal hyperbolicity for manifolds without boundary,  \cite[p.89]{fenichel_geometric_1979} and \cite[p.221]{fenichel_persistence_1972}.
\begin{definition}[Normally Hyperbolic Invariant Manifold \cite{fenichel_geometric_1979}]\label{def:nhim}
Let $\Lambda$ be a compact,  invariant under $X$  manifold without boundary.
 Let $E^s$ and $E^u$ be subbundles of $T \mathbb{R}^d|\Lambda$ such that $E^s + E^u = T \mathbb{R}^d|\Lambda$, $E^s\cap E^u=T\Lambda$, $E^u$ is negatively invariant under $X$ and $E^s$ is negatively invariant under $-X$.
 We say $\Lambda$ is $r$-normally hyperbolic if $\Lambda$ is an overflowing invariant manifold with a subbundle $E^u$ satisfying the rate assumptions \eqref{eq:A} and $\Lambda$ is so with $E^s$ under $-X$.
\end{definition}

The geometric singular perturbation theory \cite[Theorem 9.1]{fenichel_geometric_1979} applies the invariant manifold theory
to the {\it fast-slow} structure induced by the dynamical system
%
\begin{equation} \label{eq:fast-slow}
 \left\{
 \begin{aligned}
  \dot{x}&=f(x,y,\epsilon),\\
  \epsilon\dot{y}&=g(x,y,\epsilon),
 \end{aligned}\right. \quad \text{where $\epsilon \in (-\epsilon_0,\epsilon_0), \epsilon_0>0$ small, $x\in \mathbb{R}^\ell$, $y\in \mathbb{R}^k$, $\ell+k=d$.}
\end{equation}
We say $x$ is a slow variable and $y$ is a fast variable. We assume that $f$ and $g$ are sufficiently smooth,
and the terms slow and fast variable originate from
the two limiting asymptotic problems :
\begin{equation*} 
 \text{(Reduced Problem)}\quad\left\{
 \begin{aligned}
    \dot{x} &= f(x,y,0),\\
    0&= g(x,y,0),
 \end{aligned}\right.
 \hspace{2.5em}
 \text{(Layer Problem)} \quad
 \left\{
 \begin{aligned}
    x'&= 0,\\
    y'&= g(x,y,0), \quad (\cdot)' = \frac{d}{d(t/\epsilon)}.
 \end{aligned}\right.
\end{equation*}

The zeroset $\mathcal{S}$ of $g(x,y,0)$ defines a manifold the orbits of the Reduced problem take values. In general, $\mathcal{S}$ is not realized as a graph as it can have many branches. This manifold consists of equilibria of the Layer problem. We consider
\begin{align*}
 \mathcal{S}& = \Big\{ (x,y)\:\Big|\: g(x,y,0)=0\Big\},\\
 \mathcal{S}_R&\subset \Big\{ (x,y)\in \mathcal{S} \:\Big|\: \text{$D_y g(x,y,0)$ has the full rank $k$}\Big\} \quad \text{open},\\
 \mathcal{S}_H&\subset \Big\{ (x,y)\in \mathcal{S}_R \:\Big|\: \text{all eigenvalues of $D_y g(x,y,0)$ have nontrivial real parts}\Big\}\quad \text{open}.
\end{align*}
On $\mathcal{S}_R$, the equation $0=g(x,y,0)$ is locally solvable for $y$ in terms of $x$ and we speak of the reduced vector field $X_R$ on slow variables. (See equation (7.8) in \cite{fenichel_geometric_1979}.) On a compact subset $K \subset \mathcal{S}_H$ Fenichel's persistence Theorem \cite[Theorem 9.1]{fenichel_geometric_1979} applies. 




In the sequel, we need the notion of transversal intersections:

\begin{definition}[Transversal Intersection]  (\cite[Definition 3.1]{Sz1991})
 Let ${\mathcal{M}}_1$ and ${\mathcal{M}}_2$ be submanifolds of a manifold ${\mathcal{M}}$. The manifolds ${\mathcal{M}}_1$ and ${\mathcal{M}}_2$ intersect transversally at a point $m\in{\mathcal{M}}_1\cap {\mathcal{M}}_2$ iff
 $$T_m{\mathcal{M}} =  T_m{\mathcal{M}}_1+T_m{\mathcal{M}}_2$$
 holds, where $T_m\mathcal{M}$ denotes the tangent space of the manifold $\mathcal{M}$ and similarly for $\mathcal{M}_1$ and $\mathcal{M}_2$.
\end{definition}

It is shown in \cite{Sz1991} that when heteroclinic orbits are realized as transversal interesections of a stable and an unstable manifold
they are persistent under perturbations.


\subsection{Singular orbits for the inviscid system with $n=0$}\label{sec:singorb}

Let us describe how we apply the perturbation theory to the case of \eqref{eq:slow}. We take two normally hyperbolic manifolds 
$\mathcal{N}_0$ and $\mathcal{N}_1$, which are simply the equilibrium points $M_0$ and $M_1$. The goal of this section is to establish the transversal intersection of the $\mathcal{N}_0^u$, the unstable manifold of $\mathcal{N}_0$  in $pqrs$-space, and $\mathcal{N}^s_1$, the stable manifold of $\mathcal{N}_1$.

A bunch of symbols and series of notations, following \cite{Sz1991}, are introduced and used for the remaining section.
$K\subset\mathcal{S}_H$ denotes a compact critical manifold to be specified later. The reduced vector field problem is defined in $K$ and $X_R$ denotes the reduced vector field. As the reduced vector field is defined in $K$, $K$ may contain a finer invariant manifold $\mathcal{N}' \hookrightarrow K$ that is normally hyperbolic to the reduced vector field.
As usual $\mathcal{N'}$, $W^u(\mathcal{N}')$, and $W^s(\mathcal{N}')$ will be respectively the manifold, its unstable, and its stable manifolds embedded in $K$. In particular, $W_0^u$ denotes the reduced unstable manifold of $\mathcal{N}_0$ and $W_1^s$ denotes the reduced stable manifold of $\mathcal{N}_1$ embedded in $K$. Inside of $W^u(\mathcal{N}')$ are foliations $\mathcal{F}^u_x$ (see \cite[Theorem 12.2]{fenichel_geometric_1979}), meaning a foliation that passes  through $x\in W^u(\mathcal{N}')$.
$\mathcal{F}^s_x$ stands for the analogous foliation of  $W^s(\mathcal{N}')$.
Finally, when geometric objects are extended via the invariant manifold theory,  those objects for $n>0$  are denoted by a superscript $n$, for example $K^n$, $\mathcal{N}_0^n$, $\mathcal{N}_0^{u,n}$, $W_0^{u,n}$, $M_0^n$, $\cdots$.

Next, we identify the Reduced problem,
\begin{equation}\label{eq:slow0} \tag{R}
 \begin{aligned}
 \dot{p} &=p\Big(\frac{1}{\lambda}({r}-a) + 2- \lambda p {r} -q\Big),\\
 \dot{q} &=q\Big(1 -\lambda p {r} -q\Big) + b p {r},\\
 0&=r\Big(\frac{\alpha-m}{\lambda(1+\alpha)}(r-a) + \lambda pr + q +\frac{\alpha}{\lambda}r\big(s- \frac{1+m}{1+\alpha}\big)\Big),\\
 \dot{s} &=s\Big(\frac{\alpha-m}{\lambda(1+\alpha)}({r}-a) + \lambda p{r} + q - \frac{1}{\lambda}{r}\big(s- \frac{1+m}{1+\alpha}\big)\Big),
 \end{aligned}
\end{equation}
and the Layer problem for the system \eqref{eq:slow},
\begin{equation} \label{eq:fast0}
 \begin{aligned}
 {p}' &=0, \quad {q}' =0, \quad {r}' =r\Big(\frac{\alpha-m}{\lambda(1+\alpha)}(r-a) + \lambda pr + q +\frac{\alpha}{\lambda}r\big(s- \frac{1+m}{1+\alpha}\big)\Big),  \quad{s}' =0 .
 \end{aligned}
\end{equation}
Here, $(\cdot)'= \frac{d}{d\tilde{\eta}} := \frac{d}{d(\eta/n)}$ denotes differentiation with respect to the fast independent variable $\tilde{\eta}$. The zero-set  of
the function
\begin{equation}
g(p,q,r,s) \triangleq r\Big(\frac{\alpha-m}{\lambda(1+\alpha)}(r-a) + \lambda pr + q +\frac{\alpha}{\lambda}r\big(s- \frac{1+m}{1+\alpha}\big)\Big)\label{eq:zeroset}
\end{equation}
consists of the equilibria of \eqref{eq:fast0}.

\subsubsection{Choice of the critical manifold $K$} \label{sec:choice}

The algebraic equation $g(p,q,r,s)=0$ specifies three dimensional hypersurfaces. Away from the $r\equiv0$ plane, one may obtain the hypersurface as a graph of
the function
\begin{equation*}
r = \hat{r}(p,q,s) = \frac{ \frac{\alpha-m}{\lambda(1+\alpha)}a - q }{  \frac{\alpha-m}{\lambda(1+\alpha)} + \lambda p + \frac{\alpha}{\lambda}\big(s- \frac{1+m}{1+\alpha}\big)} \, ,
\end{equation*}
or implicitly in the form
\begin{equation}
\frac{\alpha-m}{\lambda(1+\alpha)}(\hat{r}-a) + \lambda p\hat{r} + q +\frac{\alpha}{\lambda}\hat{r}\big(s- \frac{1+m}{1+\alpha}\big)=0. \label{eq:implicit}
\end{equation}

\begin{figure}[ht]
 \centering
  \psfrag{p}{\scriptsize $p$}
  \psfrag{q}{\scriptsize~~~$q$}
  \psfrag{s}{\scriptsize $s-\frac{1+m}{1+\alpha}$}
  \psfrag{x0}{\scriptsize $M_0$}
  \psfrag{x1}{\scriptsize $M_1$}
  \psfrag{R1}{\scriptsize $\hat{r}(p,q,s)=a$}
  \psfrag{R2}{\scriptsize $\hat{r}(p,q,s)=R_1$}
  \psfrag{R3}{\scriptsize $\hat{r}(p,q,s)=R_2$}
%
  \includegraphics[width=6cm]{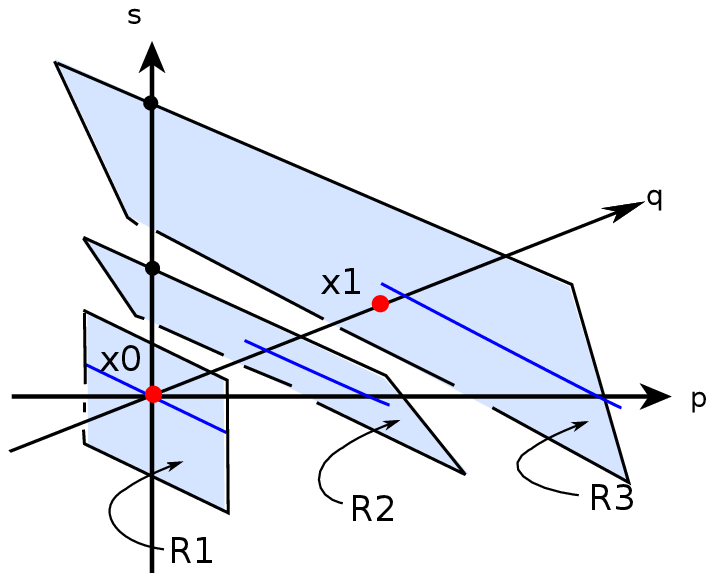}
  \caption{Affine level sets $\hat{r}(p,q,s)=R$, $0\le R\le a$  in $pqs$-space} \label{fig:affine}
\end{figure}

The level sets of $\hat{r}$ can be used in the $pqs$-space in order to visualize the hypersurface; they are affine due to \eqref{eq:implicit}.
Fig. \ref{fig:affine} illustrates a few marked affine surfaces of $\hat{r}(p,q,s)=R$, in the range $0\le R\le a$. When $R=a(=r_0)$, it passes through $\left(0,0,\tfrac{1+m}{1+\alpha}\right)$, which is the equilibrium $M_0$. As $R$ decreases the affine level sets sweep the positive $p,q$ sector. The surface crosses the other equilibrium $M_1$ when $R=r_1$. Then $R$ continues to decrease until it touches the $r\equiv0$ plane.

The critical manifold $K$ is selected taking account of the properties of $\hat{r}(p,q,r)$. The domain of $\hat{r}$ is a trapezoid in $pqs$-space
\begin{align*}
 D &\triangleq \left\{ \: (p,q,s) \: \Big| \:  p\ge-\epsilon, ~~ |q|\le2, ~~ \left|s-\frac{1+m}{1+\alpha}\right| \le \frac{1}{2}\min\left\{\frac{\alpha-m}{\alpha(1+\alpha)},\frac{1+m}{(1+\alpha)}\right\},\right.  \\
 &\left. \hat{r}(p,q,s)\ge \frac{1}{2}\min\{1,r_1\}\right\}.
\end{align*}
where  $\epsilon$ is a positive parameter selected sufficiently small.  $K$ is then defined by setting $K\triangleq\big(D,\hat{r}(D)\big)$.
\begin{figure}[ht]
 \centering
  \psfrag{p}{\scriptsize \hskip -2pt $p$}
  \psfrag{q}{\scriptsize~~~$q$}
  \psfrag{s}{\scriptsize $s-\frac{1+m}{1+\alpha}$}
  \psfrag{x0}{\scriptsize \hskip -4pt$M_0$}
  \psfrag{x1}{\scriptsize $M_1$}
  \psfrag{K}{\scriptsize \hskip -85pt $\hat{r}(p,q,s)= \frac{1}{2}\min\{1,r_1\}$}
  \includegraphics[width=5cm]{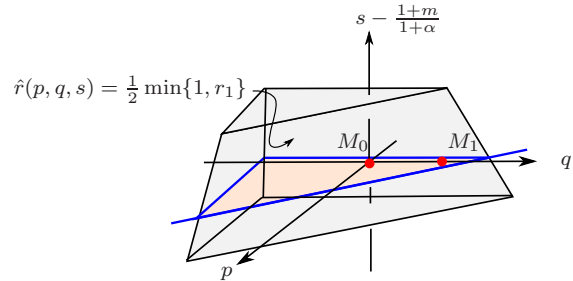}
  \caption{The trapezoid $D$, the domain of the graph.} \label{fig:D}
\end{figure}
Note that $K$ is chosen so that $(i)$ $M_0$ and $M_1$ are on $K$; $(ii)$ $s$ and $r=\hat{r}(p,q,s)$ have positive lower bound on $K$. See the trapezoid $D$ in Fig. \ref{fig:D}.

Next, we verify that $K\subset \mathcal{S}_H$.
\begin{proposition}
We have that $K\subset \mathcal{S}_H$, i.e., the partial jacobian $\frac{\partial g}{\partial r}(p,q,r,s)|_{r=\hat{r}(p,q,s)} >0 $ for all $(p,q,r,s)\in K$. 
\end{proposition}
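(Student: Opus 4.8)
The plan is to exploit the factored form of $g$. Write $g(p,q,r,s)=r\,h(p,q,r,s)$, where
\[
h(p,q,r,s)=\frac{\alpha-m}{\lambda(1+\alpha)}(r-a) + \lambda p r + q +\frac{\alpha}{\lambda}r\Big(s- \frac{1+m}{1+\alpha}\Big),
\]
so that $\frac{\partial g}{\partial r}=h+r\,\frac{\partial h}{\partial r}$. On $K$ one has, by the implicit relation \eqref{eq:implicit} defining $\hat r$, that $h\big(p,q,\hat r(p,q,s),s\big)=0$; hence at $r=\hat r(p,q,s)$
\[
\frac{\partial g}{\partial r}\Big|_{r=\hat r}= \hat r(p,q,s)\,\frac{\partial h}{\partial r} = \hat r(p,q,s)\left(\frac{\alpha-m}{\lambda(1+\alpha)} + \lambda p + \frac{\alpha}{\lambda}\Big(s- \frac{1+m}{1+\alpha}\Big)\right),
\]
the parenthesis being nothing but the denominator in the explicit formula for $\hat r$. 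Since $\hat r(p,q,s)\ge\frac12\min\{1,r_1\}>0$ on $K$ by construction, the whole matter reduces to showing that this denominator stays strictly positive on the trapezoid $D$.

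To do this I would argue termwise. First, the net-softening inequality $\alpha-m>n>0$ — a consequence of \eqref{eq:paramrange}$_4$ — gives $\frac{\alpha-m}{\lambda(1+\alpha)}>0$. Second, the constraint $\big|s-\frac{1+m}{1+\alpha}\big|\le\frac12\frac{\alpha-m}{\alpha(1+\alpha)}$ imposed in the definition of $D$ yields $\big|\frac{\alpha}{\lambda}(s-\frac{1+m}{1+\alpha})\big|\le\frac12\frac{\alpha-m}{\lambda(1+\alpha)}$, so the first two terms together are at least $\frac12\frac{\alpha-m}{\lambda(1+\alpha)}$. Third, on $D$ we have $\lambda p\ge-\lambda\epsilon$; choosing the free parameter $\epsilon$ in the definition of $D$ small enough (say $\epsilon<\frac{\alpha-m}{2\lambda^2(1+\alpha)}$, consistent with ``$\epsilon$ sufficiently small'' in Section~\ref{sec:choice}) makes the bound
\[
\frac{\partial h}{\partial r}\Big|_{r=\hat r}\;\ge\;\frac12\frac{\alpha-m}{\lambda(1+\alpha)}-\lambda\epsilon\;>\;0
\]
hold throughout $D$, and therefore $\frac{\partial g}{\partial r}|_{r=\hat r}>0$ on $K$, as claimed.

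I do not anticipate a genuine obstacle: $g$ is quadratic in $r$, so $\frac{\partial g}{\partial r}$ is affine in $r$ and the estimate is pure bookkeeping. The only subtlety is keeping the thresholds straight — that $\alpha>m$ controls the sign of the leading term, and that the radii allowed for $s-\frac{1+m}{1+\alpha}$ and for $p$ in the definition of $D$ were chosen precisely so as not to overwhelm $\frac{\alpha-m}{\lambda(1+\alpha)}$ — which is exactly why $D$ and the smallness of $\epsilon$ were set up as they were in Section~\ref{sec:choice}.
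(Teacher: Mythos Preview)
Your proof is correct and essentially identical to the paper's: both compute $\partial_r g|_{r=\hat r}=\hat r\big(\frac{\alpha-m}{\lambda(1+\alpha)}+\lambda p+\frac{\alpha}{\lambda}(s-\frac{1+m}{1+\alpha})\big)$ via the product rule and the vanishing of $h$ on $K$, then bound the parenthesis below using the $s$-constraint in $D$ and the smallness of $\epsilon$. The only cosmetic difference is that the paper takes the slightly more conservative threshold $\epsilon<\frac{\alpha-m}{4\lambda^2(1+\alpha)}$ where you take $\epsilon<\frac{\alpha-m}{2\lambda^2(1+\alpha)}$; both suffice.
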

\begin{proof}
 \begin{align*}
 \left.\frac{\partial g}{\partial r}\right|_{K} &= \Big(\frac{\alpha-m}{\lambda(1+\alpha)}(\hat{r}-a) + \lambda p\hat{r} + q +\frac{\alpha}{\lambda}\hat{r}\big(s- \frac{1+m}{1+\alpha}\big)\Big) + \hat{r}\Big(\frac{\alpha-m}{\lambda(1+\alpha)} + \lambda p + \frac{\alpha}{\lambda}\big(s- \frac{1+m}{1+\alpha}\big)\Big)\\
 &= \hat{r}\Big(\frac{\alpha-m}{\lambda(1+\alpha)} + \lambda p + \frac{\alpha}{\lambda}\big(s- \frac{1+m}{1+\alpha}\big) \Big)\ge \frac{1}{2}\min\{1,r_1\}\Big(\frac{\alpha-m}{2\lambda(1+\alpha)} - \lambda \epsilon\Big).
 \end{align*}
 It suffices to take $\epsilon < \frac{\alpha-m}{4\lambda^2(1+\alpha)}$, independently of $n$.
\end{proof}

\subsubsection{Nested invariant manifold structures in $K$}
The flow \eqref{eq:slow0}, strictly restricted on $K$, is further analyzed. The three dimensional flow
\begin{equation}\label{eq:slow02} \tag{$\text{R}^\prime$}
 \begin{aligned}
 \dot{p} &= p\Big(\frac{D}{\lambda(1+\alpha)}(\hat{r}-a_0) + \frac{\alpha}{\lambda}\hat{r}\big(s- \frac{1+m}{1+\alpha}\big) \Big),\\
 \dot{q} &=q\Big(1 -\lambda p \hat{r} -q\Big) + b p \hat{r},\\
 \dot{s} &= -\frac{1+\alpha}{\lambda}\hat{r}s\Big(s- \frac{1+m}{1+\alpha}\Big).
 \end{aligned}
\end{equation}
augmented in the $r$-direction by $r=\hat{r}(p,q,s)$ is the flow of the Reduced system \eqref{eq:slow0}.

It is necessary to pinpoint a few finer invariant structures of the reduced flow \eqref{eq:slow02}, on which the invariant manifold theory can  equally well be applied.  This will be crucial ingredient of our arguments. To summarize, in the three dimensional reduced space $K$, we
consider the embeddings
$$
M_0, M_1 \quad \hookrightarrow \quad \text{$p\equiv0$ line on $s\equiv\tfrac{1+m}{1+\alpha}$ plane} \quad \hookrightarrow \quad \text{$s\equiv\tfrac{1+m}{1+\alpha}$ plane} \quad \hookrightarrow \quad K \, ,
$$
which consist of manifolds all of which are invariant under the flow \eqref{eq:slow02}. Indeed, on the plane $s\equiv\tfrac{1+m}{1+\alpha}$, \eqref{eq:slow02} decouples,
\begin{equation}\label{eq:slow03}
 \begin{aligned}
 \dot{p} &= p\Big(\frac{D}{\lambda(1+\alpha)}(\hat{r}-a_0)\Big),\\
 \dot{q} &= q\Big(1 -\lambda p \hat{r} -q\Big) + b p \hat{r},
 \end{aligned}
\end{equation}
where $\hat{r}=\hat{r}\big(p,q,\frac{1+m}{1+\alpha}\big)$. Restricting to $p\equiv0$ we obtain yet another invariant line and importantly this line contains
the equilibrium points $M_0$ and $M_1$.

\begin{figure}[ht]
 \centering
  \psfrag{p}{$p$}
  \psfrag{q}{$q$}
  \psfrag{s}{\hskip -2em $s-\tfrac{1+m}{1+\alpha}$}
  \psfrag{N2}{\scriptsize $N'$: segment in $q$-axis}
  \psfrag{F2}{\scriptsize normal foliations}
  \psfrag{M0}{$M_0$}
  \psfrag{M1}{$M_1$}
  \psfrag{F0}{}
  \psfrag{F1}{}
 \includegraphics[height=5cm]{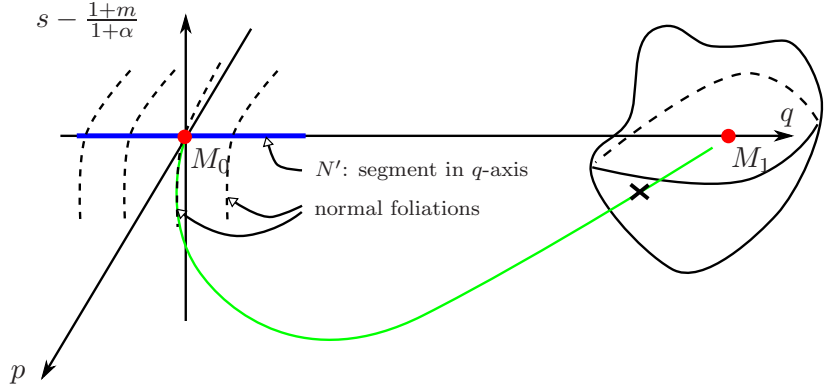}
  \caption{Nested invariant manifold structures} \label{fig:HF}
\end{figure}
Fig. \ref{fig:HF} illustrates the rest of the program. The justification of the following descriptions will be the subject of the next section. $M_1$ is a stable node; the three dimensional volume surrounding $M_1$ in Fig. \ref{fig:HF} depicts its stable manifold. $M_0$ is a saddle; $M_0$ has two unstable dimensions in $s\equiv\tfrac{1+m}{1+\alpha}$, and has one stable dimension in its oblique direction. The vector aligned to $q$-axis and the one to the green orbit are two eigenvectors for the unstable dimensions. This explains how our heteroclinic orbit (the green one) appears in $K$. 

Not all of the manifolds appearing are {\it normally hyperbolic} to the reduced vector field \eqref{eq:slow02}: $M_0$ and $M_1$ are hyperbolic equilibrium points; $\hat{\mathcal{N}}^0$, a segment of $p\equiv0$ line (the blue portion in Fig. \ref{fig:HF}) will be identified as an overflowing manifold satisfying the rate assumptions \eqref{eq:A} and \eqref{eq:B}. However,
the plane $s\equiv \tfrac{1+m}{1+\alpha}$, in general, does not satisfy the necessary rate assumptions. See Remark \ref{plane_hyp}

\subsubsection{Analysis of the flow of  \eqref{eq:slow02}.}

In the phase space $K$, the flow of \eqref{eq:slow02} can be completely analyzed. We visualize the overall flow by first analyzing the flow when restricted to the invariant plane $s\equiv\tfrac{1+m}{1+\alpha}$, and then noting that off the invariant plane the flow amounts to a stable  relaxation process towards the invariant plane at
$s\equiv\tfrac{1+m}{1+\alpha}$,  see Fig. \ref{fig:n0pqs}. The flow in the plane $s\equiv\tfrac{1+m}{1+\alpha}$ is characterized by using planar dynamical systems theory.

\begin{figure}[ht]
 \centering
 \psfrag{p}{$p$}
 \psfrag{q}{$q$}
 \psfrag{s}{$s-\tfrac{1+m}{1+ \alpha}$}
 \psfrag{x0}{\hskip -2pt $M_0$}
 \psfrag{x1}{\hskip -4pt $M_1$}
 \psfrag{B}{}
 \includegraphics[width=4cm]{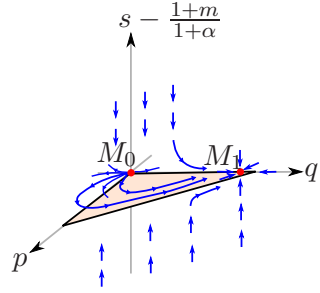}
 \caption{Flow on and around $s\equiv\tfrac{1+m}{1+ \alpha}$} \label{fig:n0pqs}
\end{figure}

\begin{remark} \label{plane_hyp}
The property that the invariant plane $s\equiv\tfrac{1+m}{1+ \alpha}$ has one stable direction in $K$  may lead one to believe that the plane is normally hyperbolic.
Indeed, the plane $s\equiv\tfrac{1+m}{1+ \alpha}$ admits a splitting $T\mathbb{R}^3=T\bar\Lambda \oplus E^s$ with the normal direction decaying.
However, the notion of normal hyperbolicity requires stronger properties than merely admitting a splitting: Note that in \eqref{eq:A} and \eqref{eq:B} upon a given splitting we demand $\sigma^s<1/r$ and $\rho^u<1/r'$. The latter for example leads to demanding that the negative eigenvalue $\mu_{14}$ of $M_1$ is strictly less than all other negative eigenvalues, which is not always the case. As a consequence, the persistence Theorem does not apply and we do not assert its persistence under perturbations.
\end{remark}

We turn to the reduced linear stability of $M_0$ and $M_1$ in $pqs$-space. To summarize, the expressions in Section \ref{sec:equil} also hold for $n=0$, with the exception that the third eigenvalue $\mu_{03}$ and the third eigenvector $X_{03}$ of $M_0$ ($\mu_{13}$ and $X_{13}$ respectively of $M_1$) are no longer used. Indeed when $n=0$, the flow is restricted on the three dimensional set $K$. 

The following Lemma utilizes planar dynamical systems theory to characterizes the flow on a triangle $T$ in the $pq$-plane. 

\begin{figure}[ht]
 \centering
  \psfrag{X01}{\scriptsize$X_{02}$}
  \psfrag{X02}{\scriptsize$X_{01}$}
  \psfrag{p}{\scriptsize $p$}
  \psfrag{q}{\scriptsize~~~$q$}
  \psfrag{s}{\scriptsize $s-\frac{1+m}{1+\alpha}$}
  \psfrag{CC}{\scriptsize\hskip 5pt$\frac{\alpha-m}{\lambda(1+\alpha)}\big(-\frac{1}{\lambda},a\big)$}
  \includegraphics[width=8cm]{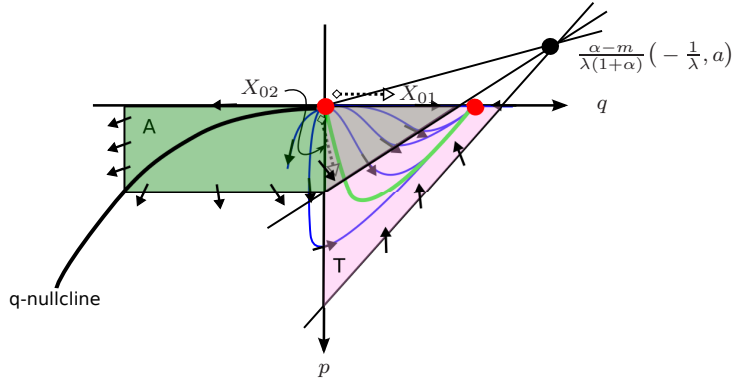} \label{fig:flow0}
  \caption{The schematic sketch of the flow on the invariant plane $s=\frac{1+m}{1+\alpha}$.  Restricted on $s=\frac{1+m}{1+\alpha}$, $M_0$ is an unstable node and $M_1$ is a stable node. Two directions of unstable subspaces of $M_0$ are denoted by $X_{01}$ and $X_{02}$; the straight lines emanating from the
  point $\frac{\alpha-m}{\lambda(1+\alpha)}\big(-\frac{1}{\lambda},a\big)$ are the intersections of the level sets of $\hat{r}$
  with the plane $s=\frac{1+m}{1+\alpha}$; the curve in the fourth quadrant is the nullcline of the equation $\eqref{eq:slow02}_2$; the triangle $T$ is a 2-dimensional positively invariant set; the trapezoid $A$ is a 2-dimensional negatively invariant set; the $\omega$-limit set of any point in $T$ is $M_1$;
  the $\alpha$-limit set of any point in $A$ is $M_0$; in particular there is a heteroclinic orbit (green one) that emanates from $M_0$ in the direction $X_{01}$ lying
  in the strongly unstable manifold of $M_0$.}
\end{figure}

\begin{lemma} \label{lem:T}
 Let $T$ be the closed triangle on $s\equiv \frac{1+m}{1+\alpha}$ enclosed by $p=0$, $q=0$, and the level set $\hat{r}(p,q,\frac{1+m}{1+\alpha})= \frac{1}{2}\min\{1,r_1\}$ that is intersected by $s\equiv \frac{1+m}{1+\alpha}$.
 Then $T\setminus M_0 \subset W^s(M_1)$.
\end{lemma}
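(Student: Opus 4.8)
The plan is to restrict the reduced flow \eqref{eq:slow02} to the invariant plane $s\equiv\tfrac{1+m}{1+\alpha}$, where it becomes the planar system \eqref{eq:slow03}, and then to invoke the Poincar\'e--Bendixson theorem. On that plane the auxiliary function is the rational expression $\hat r = \hat r\big(p,q,\tfrac{1+m}{1+\alpha}\big)=\big(c_1a-q\big)\big/\big(c_1+\lambda p\big)$ with $c_1=\tfrac{\alpha-m}{\lambda(1+\alpha)}$, so its level sets are the straight lines through $\tfrac{\alpha-m}{\lambda(1+\alpha)}\big(-\tfrac1\lambda,a\big)$ pictured in Fig.~\ref{fig:flow0}, and it is smooth and bounded between positive constants on $T$. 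First I would locate the rest points of \eqref{eq:slow03} inside $T$: the edge $\{p=0\}$ is invariant and carries $\dot q=q(1-q)$, producing exactly $M_0=(0,0)$ and $M_1=(0,1)$; and for $p>0$ a rest point forces $\hat r=a_0$, so that substituting $q=Q_0-a_0\lambda p$ (with $Q_0:=\tfrac{2(\alpha-m)}{D}$) into $\dot q=0$ cancels the quadratic terms and leaves the single linear relation $a_0\lambda p=Q_0(Q_0-1)\big/\big(\tfrac b\lambda-(1-Q_0)\big)$; since \eqref{eq:paramrange} gives $Q_0<1$, $1-Q_0=\tfrac{1+m}{D}$ and $\tfrac b\lambda>1-Q_0$, this forces $p<0$, so $T$ contains no rest point with $p>0$. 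Linearizing at $M_0$ and $M_1$ reproduces the eigenvalues of Section~\ref{sec:equil} with $n=0$: $M_0$ is an unstable node (eigenvalues $2,1$) and $M_1$ a stable node (eigenvalues $-1$, $-\tfrac{1+m}{\alpha-m}$).

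Next I would check that $T$ is positively invariant by examining the vector field on its three edges. The edge $\{p=0\}$ is tangential; on $\{q=0,\ p>0\}$ one has $\dot q=b\,p\,\hat r>0$, so the flow enters strictly; and on the hypotenuse $\{\hat r\equiv R\}$, $R:=\tfrac12\min\{1,r_1\}$, with $T$ on the side $\hat r\ge R$, I would use $\dot p=-\tfrac{K_0}{\lambda}p$ there, $K_0:=\tfrac{D}{1+\alpha}(a_0-R)>0$, and the relation $pR=\tfrac{Q_1-q}{\lambda}$ valid along the edge, to get $\tfrac{d}{dt}\hat r=-\big(c_1+\lambda p\big)^{-1}\big[q(1-Q_1)+\lambda^{-1}(Q_1-q)(b-K_0)\big]$, with $Q_1$ the $q$-coordinate of the vertex $\{p=0\}\cap\{\hat r=R\}$. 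Since $b<1$ (because \eqref{eq:lambda-range} gives $1+\lambda<\tfrac{D}{1+m}$, hence $b=\tfrac{(1+m)(1+\lambda)}{D}<1$) and $\tfrac{DR}{1+\alpha}<1$, one has $b-K_0=b-2+\tfrac{DR}{1+\alpha}<0$; and $Q_1>1$ (equivalently, $M_1$ lies in the relative interior of the edge $\{p=0\}$, which follows from $r_1>0$, i.e. from \eqref{eq:lambda-range}). The bracket is therefore strictly negative along the edge, so $\tfrac{d}{dt}\hat r>0$ there and the flow enters $T$.

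Finally I would conclude with Poincar\'e--Bendixson. For $x\in T\setminus M_0$ the forward orbit remains in the compact set $T$, so $\omega(x)$ is a nonempty, compact, connected invariant subset of $T$, hence an equilibrium, a periodic orbit, or a cycle graph of equilibria and connecting orbits. A periodic orbit cannot cross the invariant line $\{p=0\}$, so it would lie in $\{p>0\}$ and enclose a rest point with $p>0$, of which there are none; a cycle graph would require an orbit issuing from $M_1$, impossible since $M_1$ is a sink; and $\omega(x)=\{M_0\}$ is excluded because $M_0$ is a source, so by Hartman--Grobman a whole neighbourhood of $M_0$ is forward-escaping. Thus $\omega(x)=\{M_1\}$, that is $x\in W^s(M_1)$, which is the assertion. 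I expect the only genuine effort to be the sign bookkeeping in the middle paragraph---confirming $Q_0<1$, $b/\lambda>1-Q_0$, $b<1$, $K_0>0$, $\tfrac{DR}{1+\alpha}<1$ and $Q_1>1$ from the parameter ranges \eqref{eq:paramrange}--\eqref{eq:lambda-range}---which is elementary but must be handled with care; the dynamical content is just the planar trichotomy.
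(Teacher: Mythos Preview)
Your proposal is correct and follows essentially the same route as the paper: show that $T$ is positively invariant for the planar reduced system \eqref{eq:slow03}, then invoke the Poincar\'e--Bendixson trichotomy to force $\omega(x)=\{M_1\}$. The only substantive difference is the hypotenuse computation. The paper dots the vector field with the inward normal $\nu=(-\lambda\underline{r},-1)$ and obtains the closed-form factorization
\[
X_R\cdot\nu=\Big(\tfrac{\alpha-m}{\lambda(1+\alpha)}\Big)^{2}(\underline{r}-r_0)(\underline{r}-r_1)+\underline{r}\,p\,(1-\underline{r})\ge\delta>0,
\]
from which positivity is immediate since $\underline{r}<r_1<r_0$ and $\underline{r}<1$. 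Your expression $-(c_1+\lambda p)^{-1}\big[q(1-Q_1)+\lambda^{-1}(Q_1-q)(b-K_0)\big]$ is the same quantity up to the positive factor $(c_1+\lambda p)^{-1}$, and your sign checks ($Q_0<1$, $b/\lambda>1-Q_0$, $b<1$, $\tfrac{DR}{1+\alpha}<1$, $Q_1>1$) are all correct under \eqref{eq:paramrange}--\eqref{eq:lambda-range} at $n=0$. You also give an explicit verification that there is no equilibrium with $p>0$ in $T$, which the paper merely asserts; this makes your Poincar\'e--Bendixson step slightly more self-contained. Either packaging works.
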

\begin{proof}
$T$ is a two dimensional compact positively invariant set: (1) on $p=0$, $\nu = (1,0)$ and $X_R\cdot\nobreak\nu = \dot{p}=0$, where $X_R$ stands for the reduced vector field of \eqref{eq:slow02};
 (2) on $q=0$, $\nu = (0,1)$ and $X_R\cdot\nobreak\nu=\dot{q} = bp\hat{r}\ge0$ ($b$ in \eqref{eq:exponents} is always positive); lastly on the hypotenuse, let $\underbar{r}=\frac{1}{2}\min\{1,r_1\}$. The inward normal vector is $\nu = (-\lambda\underbar{r}, -1)$. We compute
  \begin{align}
  X_R\cdot\nu=-\lambda\underbar{r}\dot{p} -\dot{q}&= -\lambda \underbar{r}p \Big(1-\lambda \underbar{r}p -q + \frac{1}{\lambda}(\underbar{r}-a)+1\Big) - q(1-\lambda \underbar{r}p -q\big) - b \underbar{r}p \nonumber\\
  &= (1-\lambda \underbar{r}p -q)(-\lambda \underbar{r}p -q) -\underbar{r}p\Big((\underbar{r}-a)+\lambda+b\Big)\nonumber\\
  &= \left(\frac{\alpha-m}{\lambda(1+\alpha)}\right)^2(\underbar{r}-r_0)(\underbar{r}-r_1)+\underbar{r}p(1-\underbar{r})\ge \delta>0. \label{eq:affine}
 \end{align}

Let $\Omega$ be an $\omega$-limit set of the orbit from $x_0\in T\setminus M_0$. It is non-empty because $T$ is compact.
 It cannot contain $M_0$, because for the flow restricted in $T$, $M_0$ does not have  a stable manifold. It cannot contain a periodic orbit; if it did then there would be a fixed point in the interior of $T$ and this is not the case.
It cannot contain a separatrix cycle because $T$ has only two fixed points $M_0$ and $M_1$ and again $M_0$ does not have  a stable manifold.  By Poincar\'e-Bendixson Theorem, the $\omega$-limit set is $M_1$.
\end{proof}

Now we are able to state: We call $\mathcal{F}^u_{M_0}\subset W^u_0$ the strongly unstable manifold of $M_0$ satisfying \eqref{eq:rapid} (the green line in Fig. \ref{fig:HF}) that is characterizable by the Unstable manifold theorem for the hyperbolic fixed point. That $\mathcal{F}^u_{M_0}$ ends up arriving at $M_1$
follows by Lemma \ref{lem:T}, and this gives the proof for $n=0$ of Theorem \ref{thm1}. The following proposition shows that the one dimensional manifold $\mathcal{F}^u_{M_0}\subset W^u_0$ intersects the three dimensional manifold $W_1^s(=W^s(M_1))$ transversally (see Fig. \ref{fig:HF}).

\begin{proposition} \label{prop:singular}
 Let $\mathcal{N}_0=M_0$, $\mathcal{N}_1=M_1$, $\mathcal{F}^u_{M_0}\subset W^u_0$  the strongly unstable manifold of $M_0$ satisfying \eqref{eq:rapid}, $W^s_1=\Phi_{-t_0}(W^s_{loc}(M_1))$, the time $-t_0$ image of the local stable manifold of $M_1$ for large enough $t_0<\infty$. Then $\mathcal{F}^u_{M_0}$ intersects $W^s_1$ transversally in $pqs$-space.
\end{proposition}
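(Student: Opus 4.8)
The plan is to deduce the statement almost immediately from Lemma~\ref{lem:T} together with a dimension count, once I have pinned down where $\mathcal{F}^u_{M_0}$ lives; throughout I identify the three-manifold $K$ with a region of $pqs$-space via the graph $r=\hat r(p,q,s)$. First I would record the structure of the two objects. By the strong unstable manifold theorem for the hyperbolic equilibrium $M_0$ of the reduced flow \eqref{eq:slow02}, there is a unique one-dimensional invariant curve through $M_0$ tangent to the eigenvector $X_{01}$ of the larger unstable eigenvalue $\mu_{01}=2$ (recall $\mu_{01}>\mu_{02}=1$); this is $\mathcal{F}^u_{M_0}$, the orbit singled out by \eqref{eq:rapid} and Remark~\ref{rem:alpha}. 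Since the unstable subspace of $M_0$ in $K$ is spanned by $X_{01}$ and $X_{02}$, both of which are tangent to the invariant plane $\{s\equiv\frac{1+m}{1+\alpha}\}$ (see the eigenvector computation in Section~\ref{sec:equil} and Appendix~\ref{append:lin}), and since that plane is invariant with the planar flow \eqref{eq:slow03} on it, uniqueness of the strong unstable manifold forces $\mathcal{F}^u_{M_0}$ to lie entirely in the plane. Moreover $X_{01}$ has strictly positive $p$- and $q$-components, as the expansion $\chi(\log\xi)-M_0=\tfrac{\Gamma(0)}{\Sigma(0)}\xi^2 X_{01}+o(\xi^2)$ from the proof of Proposition~\ref{prop1} shows, so $\mathcal{F}^u_{M_0}\setminus\{M_0\}$ enters the interior of the triangle $T$ of Lemma~\ref{lem:T} and, by positive invariance of $T$, remains there.

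Next I would nail down the dimension of $W^s_1$. Restricted to $K$, the equilibrium $M_1$ has the three eigenvalues $\mu_{11}=-\tfrac{1+m}{\alpha-m}$, $\mu_{12}=-1$, $\mu_{14}=\mu_1^-$, all strictly negative under \eqref{eq:paramrange} (the fast eigenvalue $\mu_{13}$ plays no role at $n=0$); hence $M_1$ is a stable node of the reduced flow, $W^s_{loc}(M_1)$ is a full-dimensional neighborhood of $M_1$ in $K$, and therefore $W^s_1=\Phi_{-t_0}\big(W^s_{loc}(M_1)\big)$ is an open subset of $K$. In particular $T_mW^s_1=T_mK$ for every $m\in W^s_1$.

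Then the conclusion follows. By Lemma~\ref{lem:T}, $\mathcal{F}^u_{M_0}\setminus\{M_0\}\subset T\setminus\{M_0\}\subset W^s(M_1)$, so every non-equilibrium point of $\mathcal{F}^u_{M_0}$ converges to $M_1$ as $\eta\to\infty$ and hence enters $W^s_{loc}(M_1)$ in finite time. I would fix a point $m^*\in\mathcal{F}^u_{M_0}\setminus\{M_0\}$ and take $t_0$ larger than the time for the orbit through $m^*$ to reach $W^s_{loc}(M_1)$; then $m^*=\Phi_{-t_0}\big(\Phi_{t_0}(m^*)\big)\in W^s_1$, so $\mathcal{F}^u_{M_0}\cap W^s_1$ is nonempty and in fact contains a whole arc of $\mathcal{F}^u_{M_0}$. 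At any such point one has $T_{m^*}\mathcal{F}^u_{M_0}+T_{m^*}W^s_1=T_{m^*}K$, because the second summand already equals $T_{m^*}K$; this is precisely transversality of $\mathcal{F}^u_{M_0}$ and $W^s_1$ in $pqs$-space.

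I do not expect a genuine obstacle: the real content has been front-loaded into Lemma~\ref{lem:T} (its Poincar\'e--Bendixson argument on $T$) and into the two geometric observations above, namely that $\mathcal{F}^u_{M_0}$ never leaves the invariant plane and the triangle, and that $W^s_1$ is full-dimensional in $K$. The only point needing mild care is the choice of $t_0$: for the present statement a single $m^*$ suffices, but for the perturbation argument of Section~\ref{sec:thmproof} one wants $W^s_1$ to cover as much of $\mathrm{int}(T)$ as possible, which is available because $T$ is compact and $\mathrm{int}(T)\setminus\{M_0\}$ is exhausted by the backward orbits through $W^s_{loc}(M_1)$.
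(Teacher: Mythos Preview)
Your proof is correct and follows essentially the same approach as the paper's: both arguments invoke Lemma~\ref{lem:T} to place a point of $\mathcal{F}^u_{M_0}$ inside $W^s_1$, observe that $W^s_1$ is open (full-dimensional) in $K$ because $M_1$ is a sink of the reduced flow, and conclude transversality trivially from $T_{m^*}W^s_1=T_{m^*}K$. You supply more justification for the preliminary facts (that $\mathcal{F}^u_{M_0}$ lies in the invariant plane and enters $T$, and that all three reduced eigenvalues at $M_1$ are negative), which the paper takes as read from the surrounding discussion.
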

\begin{proof}[Proof of Proposition \ref{prop:singular}]
%
For large enough $t_0<\infty$, by Lemma \ref{lem:T} the orbit point $x\in \mathcal{F}^u_{M_0}$ must be attained in $W^s_1$ as an interior point. Therefore the tangent space ${T}_x W^s_1$ is the whole of ${T}_x \mathbb{R}^3$. Then the intersection with $\mathcal{F}^u_{M_0}$ is trivially transversal.
\end{proof}

\subsection{Persistence for $n>0$} \label{sec:thmproof}
Having set forth the critical manifold $K$ in $\mathcal{S}_H$ and the reduced vector field $X_R$ on $K$, the theorem of Fenichel holds in $K$; the family $K^n$ of {\it slow manifolds} persistently exist provided $n$ is sufficiently small. Now we show the finer hyperbolic structure of $\mathcal{F}^u_{M_0} \hookrightarrow K$. 

\begin{lemma} \label{lem:rapid}
 Let $\mathcal{N}_0=M_0$, $\mathcal{F}^u_{M_0}\subset W_0^u$  the strongly unstable manifold of $M_0$ satisfying \eqref{eq:rapid}. Then, for sufficiently small $n$, $\mathcal{F}^u_{M_0}$ perturbs in a $C^{r-1}$ manner to $\mathcal{F}^{u,n}_{M_0^n}$ the strongly unstable manifold of $M_0^n$ satisfying \eqref{eq:rapid}.
\end{lemma}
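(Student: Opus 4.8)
The idea is to transfer the question onto the persistent slow manifold and then apply the strong unstable manifold theorem for a hyperbolic equilibrium, keeping track of smooth dependence on $n$. Recall from the start of Section~\ref{sec:thmproof} that, $K\subset\mathcal S_H$ being normally hyperbolic (Section~\ref{sec:choice}), Fenichel's theorem yields for small $n$ a locally invariant slow manifold $K^n$ for \eqref{eq:slow} that is $C^{r-1}$-close to $K$ and depends on $n$ in a $C^{r-1}$ manner. Since $M_0^n\to M_0=(0,0,a,\tfrac{1+m}{1+\alpha})$ as $n\to 0^+$ and $M_0$ lies in the interior of the trapezoid $D$, the equilibrium $M_0^n$ belongs to $K^n$ for all sufficiently small $n$; moreover the orbit $\mathcal F^u_{M_0}$ — the $n=0$ heteroclinic from $M_0$ to $M_1$ produced by Lemma~\ref{lem:T} — stays in the interior of $K$, so the merely local invariance of $K^n$ is enough to continue, along this orbit, the invariant manifolds built at $M_0^n$.

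Next I would restrict \eqref{eq:slow} to $K^n$. Writing $K^n$ as a $C^{r-1}$ graph over $D$ close to $r=\hat r(p,q,s)$, the restricted flow is a three-dimensional $C^{r-1}$ vector field converging in $C^{r-1}$ to the reduced flow \eqref{eq:slow02}. Its linearization at $M_0^n$ is the restriction to $T_{M_0^n}K^n$ of the full linearization at $M_0^n$, whose spectrum consists of the ``slow'' eigenvalues of \eqref{eq:eigM0}, namely $\mu_{02}=1$, $\mu_{01}=2$, $\mu_{04}=\mu_0^-<0$, with eigenvectors $X_{02}^n,X_{01}^n,X_{04}^n$, while the fast eigenvalue $\mu_{03}=\BO(1/n)$ with eigenvector $X_{03}^n$ is transverse to $K^n$. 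Thus $M_0^n$ is a hyperbolic saddle of the restricted flow with a two-dimensional unstable manifold $W_0^{u,n}\subset K^n$, a $C^{r-1}$ perturbation of $W_0^u$.

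Because $0<\mu_{02}<\mu_{01}$ there is a spectral gap among the unstable eigenvalues, so $W_0^{u,n}$ carries a unique one-dimensional strong unstable submanifold $\mathcal F^{u,n}_{M_0^n}$ tangent at $M_0^n$ to $X_{01}^n$ — the strong unstable fiber through $M_0^n$ in Fenichel's foliation of $W_0^{u,n}$, equivalently the strong unstable manifold theorem for the hyperbolic equilibrium $M_0^n$ of the restricted flow. Since $\mu_{01}=2$ holds exactly for every $n$, any orbit $\chi^n(\eta)$ on $\mathcal F^{u,n}_{M_0^n}$ obeys $e^{-2\eta}\big(\chi^n(\eta)-M_0^n\big)\to\kappa X_{01}^n$ as $\eta\to-\infty$, which is \eqref{eq:rapid}; conversely this rapid approach forces the $X_{02}^n$-component to vanish and so characterizes $\mathcal F^{u,n}_{M_0^n}$. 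Viewed in the full four-dimensional space, $\mathcal F^{u,n}_{M_0^n}$ lies inside the two-dimensional Strongly unstable manifold of $M_0^n$ of Remark~\ref{rem:alpha} — both are invariant and tangent to $X_{01}^n$, the latter being tangent to $\mathrm{span}\{X_{01}^n,X_{03}^n\}$. Combining the $C^{r-1}$ dependence of $K^n$ on $n$ with the $C^{r-1}$ dependence of the strong unstable manifold of a hyperbolic equilibrium on its vector field then gives that $\mathcal F^{u,n}_{M_0^n}$ perturbs from $\mathcal F^u_{M_0}$ in a $C^{r-1}$ fashion.

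The main difficulty is not a single estimate but the coherent assembly of the invariant-manifold machinery: one must verify that restricting to the persistent slow manifold $K^n$ genuinely removes the fast eigenvalue $\mu_{03}=\BO(1/n)$ from the hyperbolic splitting at $M_0^n$, so that the strong unstable rate felt inside $K^n$ is precisely $\mu_{01}=2$ and the resulting one-dimensional manifold is both the object demanded by \eqref{eq:rapid} and the one consistent with the four-dimensional picture of Remark~\ref{rem:alpha}; and one must carry the single derivative loss through the two successive perturbation steps (Fenichel's slow manifold, then the equilibrium's strong unstable fiber) to arrive at the asserted $C^{r-1}$ regularity.
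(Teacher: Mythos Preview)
Your argument is correct but takes a different route from the paper's. Both approaches first pass to the three-dimensional slow manifold $K^n$ and the restricted flow thereon; the divergence is in how the one-dimensional strong unstable manifold at $M_0^n$ is produced. You invoke the classical strong unstable manifold theorem for the hyperbolic equilibrium $M_0^n$ of the restricted flow, using the spectral gap $\mu_{02}=1<\mu_{01}=2$ to single out the fiber tangent to $X_{01}^n$. The paper instead takes the \emph{weak} unstable orbit segment $\hat{\mathcal N}^0=\{p=0,\ s=\tfrac{1+m}{1+\alpha},\ q\in[-\tfrac12,\tfrac12]\}$ as an overflowing invariant manifold in the reduced $pqs$-space, verifies Fenichel's rate conditions \eqref{eq:A}--\eqref{eq:B} explicitly (computing $\nu^s=e^{-\mu_{04}}$, $\sigma^s=0$, $\alpha^u=e^{-2}$, $\rho^u=e^{-1}$, $\tau^u=0$), and applies the Fenichel foliation theorem to obtain the family of strong unstable fibers $\mathcal F^u(x,n)$ over $x\in\hat{\mathcal N}^n$; the desired object is then the fiber through $M_0^n$. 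Your approach is more elementary and self-contained, while the paper's stays within the Fenichel--Szmolyan machinery already set up in Section~\ref{sec:singpert} and yields, as a byproduct, the whole strong unstable foliation of $W_0^{u,n}$ rather than just the single fiber at the equilibrium. A minor point: your aside about $\mathcal F^u_{M_0}$ staying in the interior of $K$ is not needed for this lemma (which is purely local at $M_0$) and belongs rather to the proof of Theorem~\ref{thm1}.
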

\begin{proof}
\tcb {
In the $pqs$-space we select the line segment
$\hat{\mathcal{N}}^{0}$ that is the transversal intersection of the invariant plane $\Big\{(p,q,s)~|~ p=0 \text{ and }q\in [- \frac{1}{2}, \frac{1}{2}] \Big\}$ and the unstable manifold $W_0^u$. $\hat{\mathcal{N}}^{n}$ is defined as the intersection of the same plane with $W_0^{u,n}$. We see that $\hat{\mathcal{N}}^{0}=\Big\{(p,q,s)~|~ p=0, ~ q\in [- \frac{1}{2}, \frac{1}{2}] \text{ and } s= \frac{1+m}{1+\alpha}\Big\}.$
 }

  $\hat{\mathcal{N}}^{0}$ is the one dimensional orbit in $W_0^u$ that is not strongly unstable. We claim that $\hat{\mathcal{N}}^{0}$ is an overflowing invariant manifold as in Definition \ref{def:over} of the reduced problem. More precisely, it satisfies \eqref{eq:A} and \eqref{eq:B} with $r'=r-1$ and $E$ the tangent $pq$-plane.

 From \eqref{eq:slow03}, $\dot{q}=q(1-q)$ on $q$-axis, it is clear that $\hat{\mathcal{N}}^{0}$ is overflowing invariant. Let $E$ be $pq$-plane along $\hat{\mathcal{N}}^{0}$ and $E'$ be the lines parallel to the $s$-axis. Then, $T \mathbb{R}^3|\hat{\mathcal{N}}^{0}$ splits into three one dimensional bundles $T\hat{\mathcal{N}}^{0}\oplus N \oplus E'$ with $N$ complementary to $T\hat{\mathcal{N}}^{0}$ in $E$ such that $N_{M_0}$ is parallel to $X_{01}$.  
 The asymptotic rates are determined at $M_0$ by the eigenvalues of $M_0$. At $M_0$, $E'_{M_0}$ is the stable subspace with eigenvalue $-\mu_{04}$ and $N_{M_0}$ and $T_{M_0}\hat{\mathcal{N}}^{0}$ are the unstable ones with $\mu_{01}=2$ and $\mu_{02}=1$ respectively. From these, we compute
 $$ \nu^s = e^{-\mu_{04}}, \quad\sigma^s = 0, \quad\alpha^u = e^{-2}, \quad\rho^u=e^{-1}, \quad\tau^u=0.$$

 Therefore, for the given family of overflowing manifolds $\hat{\mathcal{N}}^{n}$, the strongly unstable manifold and its foliations $\mathcal{F}^{u}(x,n):=\mathcal{F}^{u,n}_x$ exist as a $C^{r-1}$ family in both arguments $x$ and $n$. In turn, the foliation
 $\mathcal{F}^u(M_0^n,n)$ that passes through $M_0^n$ is a $C^{r-1}$ map in $n$.
\end{proof}

Persistence of the stable manifold $W_1^s (= W^s(M_1))$ is a consequence of the classical stable manifold theorem. Theorem \ref{thm1} follows in the same way as in \cite[Theorem 3.1]{Sz1991} by the transversal intersection.

\begin{proof}[Proof of Theorem \ref{thm1}]
 By the theorem of Fenichel, for given $(\lambda,\alpha,m,0)$ satisfying \eqref{eq:paramrange} and \eqref{eq:lambda}, $n_0$ can be taken sufficiently small so that  if $n \in [0, n_0)$ then $(\lambda,\alpha,m,n)$ satisfies \eqref{eq:paramrange} and \eqref{eq:lambda} and  the system  \eqref{eq:slow} admits a transversal heteroclinic orbit joining equilibrium $M_0^{n}$ to equilibrium $M_1^{n}$: $\mathcal{F}^u_{M_0}$ perturbs to $\mathcal{F}^{u,n}_{M_0^n}$ by Lemma \ref{lem:rapid} and $W_1^s$ perturbs to $W_1^{s,n}$ and the transversal intersection is stable under the perturbation.
\end{proof}

\section{Emergence of localization}
\label{sec:localization}
By transforming back using \eqref{eq:ORItoCAP}, \eqref{eq:CAPtoBAR}, \eqref{eq:BARtoTIL}, and \eqref{eq:pqrdef}, we recover the profile
$\big(\Gamma(\xi),V(\xi),\Theta(\xi),\Sigma(\xi)\big)$ and $U(\xi)$ by \eqref{eqnsigma} and the associated solution.  
We replace $t \rightarrow t+1$ to obtain the final expression:
\begin{equation*}
\begin{aligned}
 \gamma(t,x) &= (t+1)^a\Gamma((t+1)^\lambda x), & v(t,x) &= (t+1)^b V((t+1)^\lambda x), &\theta(t,x) &= (t+1)^c \Theta((t+1)^\lambda x),\\
 \sigma(t,x) &= (t+1)^d \Sigma((t+1)^\lambda x), & u(t,x) &= (t+1)^{b+\lambda} U((t+1)^\lambda x) \, .
\end{aligned}
\end{equation*}
We interpret $\big(\Gamma(\xi),V(\xi),\Theta(\xi),\Sigma(\xi)\big)=\big(\gamma(0,x),v(0,x),\theta(0,x),\sigma(0,x)\big)|_{x=\xi}$ as the initial state.
For given material parameters $(\alpha, m, n)$, there are two available degrees of freedom giving rise to
a two-parameters family of solutions.
As noted in Section \ref{sec:twoparam}, the choices of $U_0$ and $\Gamma_0$ determine the self-similar profile  while the remaining boundary values $(\Theta_0$, $\Sigma_0)$
and the rate $\lambda$
are induced by them. The range of $U_0$ and $\Gamma_0$ is such that
 $$\frac{2(1+\alpha) -n}{D} < \frac{U_0}{\Gamma_0} < \frac{2(1+\alpha) -n}{D} + \frac{4(1+\alpha)(\alpha-m-n)(1+m)}{D(1+m+n)^2}.$$
The localizing rate $\lambda$ satisfies \eqref{eq:lambda} and takes values $0< \lambda < \frac{2(\alpha-m-n)}{1+m+n}\left(\frac{1+m}{1+m+n}\right).$
In the sequel, we establish properties of the profiles and the emergence of localization, in the sense of definition \eqref{deflocal1}, \eqref{deflocal2}.

\subsection{Properties of the self-similar profiles}
We first list some  information on the behavior of the profiles near $\xi = 0$ and as $\xi \to \infty$. The latter determines the behavior of the
induced solutions off the localization zone.

%
%
\begin{proposition} \label{prop:ss}
Let $\big(\Gamma(\xi),V(\xi),\Theta(\xi),\Sigma(\xi)\big)$ be the self-similar profiles defined by transformations of \eqref{eq:CAPtoBAR}, \eqref{eq:BARtoTIL}, \eqref{eq:pqrdef}
from the heteroclinic orbit $\chi(\eta)=\big(p(\eta),q(\eta),r(\eta),s(\eta)\big)$ constructed in Theorem \ref{thm1} in the range of  parameters $\Gamma(0)=\Gamma_0$ and $U(0)=U_0$
depicted by \eqref{eq:restriction}. $U(\xi)$ is defined by \eqref{eqnsigma}. Then,
 \begin{enumerate}
  \item[(i)] The self-similar profile achieves the boundary condition at $\xi=0$,
    \begin{equation*}
    {V}(0) = \Gamma_\xi(0) = \Theta_\xi(0)=\Sigma_\xi(0) = {U}_\xi(0)=0, \quad \Gamma(0)=\Gamma_0, \quad U(0)=U_0.
  \end{equation*}
  \item[(ii)] Its asymptotic behavior as $\xi \rightarrow 0$ is given by
  \begin{equation} \label{eq:ss_asymp0}
  \begin{aligned}
    \Gamma(\xi) -\Gamma_0 &= \Gamma^{''}(0)\frac{\xi^2}{2} + o(\xi^2), & \Gamma^{''}(0)&<0,\\
    \Theta(\xi) - c^{-\frac{1}{1+\alpha}}\Gamma_0^{\frac{m}{1+\alpha}} U_0^{\frac{1+n}{1+\alpha}} &= \Theta^{''}(0)\frac{\xi^2}{2} + o(\xi^2), & \Theta^{''}(0)&<0,\\
    \Sigma(\xi) - c^{\frac{\alpha}{1+\alpha}}\Gamma_0^{\frac{m}{1+\alpha}} U_0^{-\frac{\alpha-n}{1+\alpha}} &= \Sigma^{''}(0)\frac{\xi^2}{2} + o(\xi^2), & \Sigma^{''}(0)&>0, \\
    U(\xi) - U_0 &= U^{''}(0)\frac{\xi^2}{2} + o(\xi^2), & U^{''}(0)&<0,\\
    V(\xi) - U_0\xi &= U^{''}(0)\frac{\xi^3}{6} + o(\xi^3), & U^{''}(0)&<0.
  \end{aligned}
  \end{equation}
  \item[(iii)] Its asymptotic behavior as $\xi \rightarrow \infty$ is given by\\
  if $\mu_{11}\ne-1$, or $\mu_{11}=-1$ but $b=\lambda$,
  \begin{equation} \label{eq:ss_asymp1}
  \begin{aligned}
    \Gamma(\xi) &= \BO\big(\xi^{-\frac{1+\alpha}{\alpha-m-n}}), & V(\xi) &= \BO\big(1), &    \Theta(\xi) &= \BO\big(\xi^{-\frac{1+m+n}{\alpha-m-n}}),\\
   \Sigma(\xi) &= \BO\big(\xi), &   U(\xi) &= \BO\big(\xi^{-\frac{1+\alpha}{\alpha-m-n}})
  \end{aligned}
  \end{equation}
  otherwise
    \begin{equation} \label{eq:ss_asymp2}
  \begin{aligned}
    \Gamma(\xi) &= \BO\big(\xi^{-\frac{1+\alpha}{\alpha-m-n}}\big(\log\xi\big)^{\frac{1+\alpha}{D}}\big), & V(\xi) &= \BO\big(\big(\log\xi\big)^{-\frac{\alpha-m-n}{D}}\big),
    \\
        \Theta(\xi) &= \BO\big(\xi^{-\frac{1+m+n}{\alpha-m-n}}\big(\log\xi\big)^{\frac{1+m+n}{D}}\big),\\
   \Sigma(\xi) &= \BO\big(\xi\big(\log\xi\big)^{-\frac{\alpha-m-n}{D}}\big), &   U(\xi) &= \BO\big(\xi^{-\frac{1+\alpha}{\alpha-m-n}}\big(\log\xi\big)^{\frac{1+\alpha}{D}}\big)
  \end{aligned}
  \end{equation}
 \end{enumerate}
\end{proposition}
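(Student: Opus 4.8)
The plan is to handle (i)--(ii) and (iii) separately: (i)--(ii) are bookkeeping on top of facts already in hand, while (iii) requires a new (but routine-in-spirit) asymptotic analysis of the heteroclinic at its $\omega$-limit.

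\emph{Parts (i) and (ii).} By Theorem~\ref{thm1} the heteroclinic $\chi(\eta)=(p,q,r,s)(\eta)$ approaches $M_0$ as $\eta\to-\infty$ at rate $e^{2\eta}$ along $X_{01}$, cf.\ \eqref{eq:rapid}. Inverting the chain \eqref{eq:pqrdef}$\to$\eqref{eq:BARtoTIL}$\to$\eqref{eq:CAPtoBAR} recovers $(\Gamma,V,\Theta,\Sigma)$, and $U$ is then given by \eqref{eqnsigma}. The Taylor coefficients of $p,q,r,s$ at $\xi=0$ were already computed in the proof of Proposition~\ref{prop1}; substituting them into the inverse transformations and expanding gives \eqref{eq:ss_asymp0}, while the signs of $\Gamma''(0),\Theta''(0),\Sigma''(0),U''(0)$ are precisely those recorded in Remark~\ref{rem:signs} for $n$ sufficiently small. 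The expansion of $V$ follows from $V'=U$ and $V(0)=0$ by one integration. So nothing beyond arithmetic is needed here.

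\emph{Part (iii): the asymptotics at $M_1$.} The task is to find the leading behaviour of $\chi(\eta)$ as $\eta\to+\infty$, i.e.\ the rate of approach to $M_1=(0,1,r_1,s_1)$ along its three–dimensional stable manifold. Linearising \eqref{eq:slow} at $M_1$ (Section~\ref{sec:equil}), the stable eigenvalues are $\mu_{11}=-\tfrac{1+m+n}{\alpha-m-n}$, $\mu_{12}=-1$, $\mu_{14}=\mu_1^-<0$. Two structural observations drive the analysis. First, because the whole bracket multiplying $p$ in \eqref{eq:slow}$_1$ is weighted by $p$, which vanishes at $M_1$, the $p$–row of the linearisation is $(\mu_{11},0,0,0)$; consequently $p(\eta)=p(\eta_0)\exp\!\int_{\eta_0}^{\eta}\!\big(\tfrac1\lambda(r-a)+2-\lambda pr-q\big)\,d\eta'$ and, the integrand converging to $\mu_{11}$ exponentially fast, $p(\eta)\sim\kappa_p e^{\mu_{11}\eta}$ with $\kappa_p>0$. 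Second, the $q$–equation linearised at $M_1$ reads $\tfrac{d}{d\eta}(q-1)=\mu_{12}(q-1)+(b-\lambda)r_1\,p+\text{h.o.t.}$, a forced scalar linear ODE; its response to the input $p\sim\kappa_p e^{\mu_{11}\eta}$ is a pure exponential $\BO\!\big(e^{\max\{\mu_{11},\mu_{12}\}\eta}\big)$ when $\mu_{11}\neq\mu_{12}$ (i.e.\ $\mu_{11}\neq-1$) or when $b=\lambda$ (the forcing coefficient vanishes), but is resonant, $q-1\sim C\,\eta\,e^{-\eta}$, when $\mu_{11}=\mu_{12}=-1$ and $b\neq\lambda$ — this is the origin of the $\log\xi$ factors in \eqref{eq:ss_asymp2}. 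The coordinates $r-r_1$ and $s-s_1$ are then slaved: on $K\subset\mathcal S_H$ the fast variable $r$ is expressed through $(p,q,s)$ (Section~\ref{sec:choice}), and the $X_{14}$–component of $\chi-M_1$ decays at least as fast as the $\mu_{11},\mu_{12}$–components because for $n=0$ the heteroclinic lies in the invariant plane $s\equiv\tfrac{1+m}{1+\alpha}$, to whose normal $\mu_1^-$ corresponds (Lemma~\ref{lem:T}), and for $n>0$ small the orbit is a $C^{r-1}$ perturbation thereof (Lemma~\ref{lem:rapid}).

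\emph{Part (iii): passage to the profiles.} Inverting \eqref{eq:pqrdef} one has $\tg=p^{\frac{1+\alpha}{D}}s^{\frac{\alpha}{D}}r^{\frac{n}{D}}$, $\tth=p^{\frac{1+m+n}{D}}s^{\frac{m+n-1}{D}}r^{\frac{2n}{D}}$, $\ts=\tg/p$, $\tv=\tfrac1b q\,\ts$, $\tu=r\tg$; feeding in $r\to r_1$, $s\to s_1$, $q\to1$ and $p\sim\kappa_p e^{\mu_{11}\eta}$ (with the $\eta$–enhanced corrections of the resonant case) expresses $\tg,\tv,\tth,\ts,\tu$ as powers of $e^{\mu_{11}\eta}$, up to constants. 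Finally $\Gamma(\xi)=\xi^{-a_1}\tg(\log\xi)$, $V(\xi)=\xi^{-b_1}\tv(\log\xi)$, $\Theta(\xi)=\xi^{-c_1}\tth(\log\xi)$, $\Sigma(\xi)=\xi^{-d_1}\ts(\log\xi)$, $U(\xi)=\xi^{-a_1}\tu(\log\xi)$; inserting \eqref{eq:exponents}, \eqref{defD} and the algebraic identity $2-\mu_{11}=\tfrac{D}{\alpha-m-n}$ collapses the exponents to exactly the powers of $\xi$ in \eqref{eq:ss_asymp1}, and in the resonant case the secular $\log\xi$–factors assemble into \eqref{eq:ss_asymp2}. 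The $V$–asymptotics can alternatively be read off from $V'=U$.

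\emph{Main obstacle.} The delicate step is the asymptotic analysis at $M_1$: one must (a) confirm the orbit enters $M_1$ inside $\mathrm{span}\{X_{11},X_{12}\}$ rather than acquiring a slower contribution along $X_{14}$ — here the geometry of Section~\ref{sec:singorb} (closeness to the singular heteroclinic lying in the plane $s\equiv\tfrac{1+m}{1+\alpha}$) is used; (b) carry out the resonant case $\mu_{11}=-1$, $b\neq\lambda$ carefully, isolating the Jordan block in the $(p,q)$–sub-system and the secular terms it generates, and verifying these survive the $C^{r-1}$ perturbation from $n=0$ to $n>0$; and (c) propagate the $\BO$–estimates through the fractional–power inverse map \eqref{eq:pqrdef} without degrading the exponents. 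Everything else — the Taylor expansions at $\xi=0$ and the exponent arithmetic at $\xi=\infty$ — is mechanical though tedious.
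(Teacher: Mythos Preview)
Your treatment of (i)--(ii) is exactly the paper's: both simply refer back to the computations already done in Proposition~\ref{prop1} and Remark~\ref{rem:signs}.

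For (iii) your route is close in spirit but more laborious than necessary, and your ``main obstacle'' (a) is a non-issue. The paper's argument is this: in the eigenbasis at $M_1$ (Appendix~\ref{append:lin}) the eigenvectors $X_{12}$ and $X_{14}$ both have \emph{zero} $p$--component, while $X_{11}$ (or the generalized eigenvector $X_{11}'$ in the resonant case) has a nonzero $p$--component. Hence the leading order of $p(\eta)$ is determined entirely by the coefficient $\kappa_1'$ of $X_{11}$ in the stable-manifold expansion, and $\kappa_1'\neq 0$ because the hyperplane $\{p=0\}$ is invariant under the full nonlinear flow \eqref{eq:slow} while the heteroclinic does not lie in it. This gives $p(\log\xi)=\BO(\xi^{\mu_{11}})$ (with the appropriate modification in the Jordan case) with no need to estimate the size of the $X_{14}$--component at all. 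Your direct integration of $\dot p/p$ reaches the same conclusion for $p$, but you then spend effort on the $X_{14}$ direction that is unnecessary: since $r\to r_1>0$, $s\to s_1>0$ and $q\to 1$, only the rate at which $p\to 0$ enters the leading power of $\xi$ in the reconstruction formulas, so the decay rate $\mu_1^-$ of the $X_{14}$--mode is irrelevant to the stated $\BO$--asymptotics. (Your claim that $\mu_1^-\le\min\{\mu_{11},\mu_{12}\}$ is not true for all admissible parameters, but fortunately it is not needed.)

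One point where your analysis is actually sharper than the paper's sketch: you correctly observe from the lower-triangular $(p,q)$ block that in the resonant case $\mu_{11}=-1$, $b\neq\lambda$ the secular $\eta$--factor appears in $q-1$, while $p$ itself still behaves like $\kappa_p e^{-\eta}$ (the generalized eigenvector carries the nonzero $p$--component, and in a Jordan pair it is the \emph{eigenvector} direction that acquires the $\eta$--enhancement). Since the reconstruction formulas for $\tg,\tth,\ts,\tu$ involve only $p,r,s$, and $q$ enters $\tv$ multiplicatively with $q\to 1$, you should check carefully how the $\log\xi$ factors in \eqref{eq:ss_asymp2} actually arise from your route before asserting that they ``assemble'' there; this is the only genuinely delicate bookkeeping left.
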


\begin{proof}
The proof of the Proposition \ref{prop1} and Remark \ref{rem:signs} contains $(i)$ and $(ii)$ and thus we are left to prove $(iii)$. In a similar fashion to \eqref{eq:alpha-expan}, any orbit
$\psi(\eta)$ in the local stable manifold of $W^s(M_1)$ is characterized by a triple $(\kappa_1',\kappa_2',\kappa_3')$ in association with the asymptotic expansion
\begin{equation}
\begin{aligned}
 &\psi(\eta) -M_1\\
 &= \begin{cases} \kappa_1'e^{\mu_{11}\eta}X_{11} + \kappa_2'e^{\mu_{12}\eta}X_{12} + \kappa_4'e^{\mu_{14}\eta}X_{14} + \text{high order terms} & \text{if $\mu_{11}\ne-1$, or $\mu_{11}=-1$ but $b=\lambda$,}\\
 \kappa_1'\eta e^{\mu_{11}\eta}X_{11}' + \kappa_2'e^{\mu_{12}\eta}X_{12} + \kappa_4'e^{\mu_{14}\eta}X_{14} + \text{high order terms} & \text{if $\mu_{11}= \mu_{12} = -1$ and $b \ne \lambda$}
 \end{cases}
\end{aligned}
\end{equation}
as $\eta \rightarrow \infty$. The second formula reflects the presence of a generalized eigenvector.

Now, we have $q \rightarrow 1$, $r \rightarrow r_1$, $s \rightarrow s_1$ but $p \rightarrow 0$ and the leading order of $p$ is to be found. We can determine the coefficient
of $X_{11}$, above, because the $p$-component of the vectors $X_{12}$ and $X_{14}$ is $0$.
Since the plane $p\equiv0$ is an invariant plane for a non-linear flow, triplets of the form $(0,\kappa_2',\kappa_3')$ spans this invariant plane. Because our heteroclinic orbit $\chi(\eta)$ ventures out from the plane $p\equiv0$, $\kappa_1'$ for the expansion of $\chi(\eta)$ cannot be $0$.
This implies that the leading order of $p(\log\xi)$ is
$$p(\log\xi) = \begin{cases} \BO(\xi^{\mu_{11}}) & \text{if $\mu_{11}\ne-1$ or $\mu_{11}=-1$ but $b=\lambda$,}\\
 \BO(\xi^{\mu_{11}}\log\xi) & \text{otherwise}
 \end{cases}
 $$
as $\xi \rightarrow \infty$.

Asymptotics \eqref{eq:ss_asymp1} and \eqref{eq:ss_asymp2} are the straightforward calculations obtained from the reconstruction formulas
\begin{align*}
 \tg&=p^{\frac{1+\alpha}{D}}r^{\frac{n}{D}}s^{\frac{\alpha}{D}}, & \tv &= \frac{1}{b} p^{-\frac{\alpha-m-n}{D}}qr^{\frac{n}{D}}s^{\frac{\alpha}{D}}, & \tth&=p^{\frac{1+m+n}{D}}r^{\frac{2n}{D}}s^{-\frac{1-m-n}{D}}, \\ \ts&=p^{-\frac{\alpha-m-n}{D}}r^{\frac{n}{D}}s^{\frac{\alpha}{D}},  & \tu&=p^{\frac{1+\alpha}{D}}r^{\frac{n}{D}+1}s^{\frac{\alpha}{D}},
\end{align*}
via \eqref{eq:CAPtoBAR} and \eqref{eq:BARtoTIL}.
%
%
%
\end{proof}

\subsection{Emergence of localization}
\label{sec:emloc}

As time proceeds, the initial nonuniformity evolves into localization. This section is devoted to describing the behavior
of the various fields as time advances. We only present the generic case $-\frac{1+m+n}{\alpha-m-n}\ne-1$;
in the non-generic case we would have to add a logarithmic correction according to Proposition \ref{prop:ss}.
\begin{itemize}
 \item Strain : The strain keeps increasing as time proceeds. The growth at the origin is faster than the growth rate at  all other points:
\begin{align*}
 \gamma(t,0) &= (1+t)^{\frac{2+2\alpha-n}{D} + \frac{2+2\alpha}{D}\lambda}\Gamma(0),
 \\
 \gamma(t,x) &\sim t^{\frac{2+2\alpha-n}{D} - \frac{(1+\alpha)(1+m+n)}{D(\alpha-m-n)}\lambda}|x|^{-\frac{1+\alpha}{\alpha-m-n}}, \quad \text{as $t \rightarrow \infty$, $x\ne0$.}
\end{align*}
Recall that the condition  $\frac{2+2\alpha-n}{D} - \frac{(1+\alpha)(1+m+n)}{D(\alpha-m-n)}\lambda > 0$ was the ground for imposing \eqref{eq:lambda-range},
placed to guarantee that the plastic strain is growing even outside the localization zone. On the other hand, the difference between the rate of growth of $\gamma$ at $x=0$
and the rate at $x \ne 0$ is easily computed as $\frac{1+\alpha}{\alpha - m -n} \lambda > 0$, which indicates localization of the profile of $\gamma$ around $x=0$.
\item Temperature : For the  temperature , the growth at the origin is again faster than other points,
\begin{align*}
 \theta(t,0) &= (1+t)^{\frac{2(1+m)}{D} + \frac{2(1+m+n)}{D}\lambda}\Theta(0),
 \\
 \theta(t,x) &\sim t^{\frac{2(1+m)}{D} - \frac{(1+m+n)^2}{D(\alpha-m-n)}\lambda}|x|^{-\frac{1+m+n}{\alpha-m-n}}, \quad \text{as $t \rightarrow \infty$, $x\ne0$.}
\end{align*}
Again, the positivity of the growth rate $\frac{2(1+m)}{D} - \frac{(1+m+n)^2}{D(\alpha-m-n)}\lambda$ is a consequence of \eqref{eq:lambda-range}.
\item Strain rate : The growth rates of the strain-rate is by definition less by one to those of the strain, again illustrating localization.
\begin{align*}
 u(t,0) &= (1+t)^{\frac{1+m}{D} + \frac{2+2\alpha}{D}\lambda}U(0),
 \\
 u(t,x) &\sim t^{\frac{1+m}{D} - \frac{(1+\alpha)(1+m+n)}{D(\alpha-m-n)}\lambda}|x|^{-\frac{1+\alpha}{\alpha-m-n}}, \quad \text{as $t \rightarrow \infty$, $x\ne0$.}
\end{align*}
\item Stress : The stress decays with time at all points, but the decay at  $x=0$ is much faster than the decay in other places,
indicating stress-collapse in the interior of the band:
\begin{align*}
 \sigma(t,0) &= (1+t)^{\frac{-2\alpha+2m+n}{D} + \frac{-2\alpha+2m+2n}{D}\lambda}\Sigma(0),
 \\
 \sigma(t,x) &\sim t^{\frac{-2\alpha+2m+n}{D} +\frac{1+m+n}{D}\lambda}|x|, \quad \text{as $t \rightarrow \infty$, $x\ne0$,}
\end{align*}
The difference of the two rates is $ \big ( \frac{1+m+n}{D} - \frac{-2\alpha+2m+n}{D} \big ) \lambda= \lambda$.

\item Velocity : The velocity is an odd function of $x$. At fixed $t$, $v(t,x)$ is an increasing function of $x$ ranging from $-v_\infty(t)$ to $v_\infty(t)$,
where  $v_\infty(t)\triangleq \lim_{x \rightarrow \infty} v(t,x)$. The velocity field is contrasted with the linear field of uniform shear motion. The self-similar scaling  $\xi=(1+t)^\lambda x$ implies
that most of the transition takes place around the origin leading eventually to step-function behavior as time goes to infinity.
The asymptotic velocity is
$$v_\infty(t)=(1+t)^{b}V_\infty = (1+t)^{\frac{1+m}{D} + \frac{1+m+n}{D}\lambda}V_\infty, \quad V_\infty \triangleq \lim_{\xi \rightarrow \infty} V(\xi) <\infty.$$
Note that the far field loading condition is different from the linear profile of uniform shearing. This deviation is a consequence of our simplifying assumption of self-similarity.
\end{itemize}

\section{Numerical computation of the heteroclinic orbit}
\label{sec:numerics}

In this section we present in detail the process we followed to capture numerically the heteroclinic orbit connecting $M_0$ and $M_1$. This is a
challenging computational task since both $M_0$ and $M_1$ are saddle points and the heteroclinic orbit connecting
them is the intersection of two 3-dimensional manifolds $W^u(M_0)$ and $W^s(M_1)$ in $\mathbb{R} ^4$.
Here, we use the software package \emph{AUTO}, \cite{Doedel_1981}, \cite{DK_1986}, \cite{DCFKSW_1999} to compute the heteroclinic orbit connecting $M_0$ and $M_1$. One of the main capabilities of \emph{AUTO} is that it can perform limited bifurcation analysis for parametric systems of ordinary differential equations of the form : $\displaystyle u^{\prime}(t) = f(u(t),\chi)$
where $f(\cdot,\cdot), \ u(\cdot) \in \mathbb{R}^d$ and $\chi$ could be one or a set of free parameters.

A direct application of \emph{AUTO} for solving system \eqref{eq:slow} and computing the desired  heteroclinic orbit will fail. A more careful approach has to be considered, starting from some well prepared data and continuing by exploiting the continuation capabilities of \emph{AUTO}.
Indeed, we start with an exact solution of system  \eqref{eq:slow} available for  a specific value of the parameter $\alpha$ and variable $p$, followed by a projection and two continuation steps escaping from these particular choices for $\alpha$ and $p$ allowing us to  compute the heteroclinic orbit.
We proceed by describing these four steps in detail.

\subsection{Continuation by AUTO}\label{sec:contauto}

{\bf Step 1.} (Exact solution) The system \eqref{eq:slow} admits an explicit solution for certain values of the parameters $\alpha, n$ and the variable $p$. First for $\alpha=0$  the equations for $p, q, r$ in \eqref{eq:slow} decouple from the equation of $s$. This reduced system carries only the three dimensional unstable manifold of $M_0$ characterizing the heteroclinic orbit as a node-saddle connection. Hence, in principle, by running time backwards and using a shooting argument in a small neighbourhood of $M_1$, any heteroclinic orbit can be computed as accurate as the numerical time integrator allows. However we can be more precise and prepare the data even better by
noticing that for $p\equiv 0$ the equation for $q$ in \eqref{eq:slow} decouples completely from the rest and can be solved explicitly. Further, using this analytic value of $q$ an exact solution can be also derived for $r$ in the case when $\displaystyle n=\frac{1}{k}$, $k \ge 1, \ k\in \mathbb{Z}$ :
\begin{equation}
\label{exa0p0}
\begin{aligned}
 &\alpha=0, \quad p\equiv0, \quad q(\eta) = \frac{1}{1+e^{-\eta}},\\
 & r(\eta) = \frac{r_0 \left(1 + e^{\eta}\right)^k}{ \displaystyle \sum_{j=0}^k \frac{kW_0}{kW_0 -j}\begin{pmatrix} k\\j\end{pmatrix} e^{j\eta}} , \quad \text{where}\ W_0= - \frac{(m+n)r_0}{\lambda}.
\end{aligned}
\end{equation}

{\bf Step 2.} (Projection step, $\alpha=0, \ p\equiv 0$) At this step we integrate numerically the equation for $s$ using the  exact values of $q(\eta), r(\eta)$ found in the previous step. The integration can be performed by either \emph{AUTO} or any other numerical integrator. The integration timespan is chosen to be $\eta\in[-\eta_{max},\eta_{max}]$ so that
the starting point $(p,q,r,s)|_{\eta=-\eta_{max}}$ and the ending point $(p,q,r,s)|_{\eta=\eta_{max}}$ both fall in small neighbourhoods of $M_0$ and $M_1$ respectively. In particular we choose the starting point so that $(p,q,r,s)|_{\eta=-\eta_{max}}=M_0 + \epsilon_0 \nu_0$, $\nu_0 = X_{02}$ and $\epsilon_0$ as small parameter. Using this an initial value we integrate numerically
the following non-autonomous equation for $s$
$$
\dot{s} =s\Big(\frac{-m-n}{\lambda}(r(\eta)-a) + q(\eta) - \frac{1}{\lambda}r(\eta)\big(s- (1+m+n)\big) - \frac{n}{\lambda}\Big).
$$
At the end of the calculation we project the vector $(p,q,r,s)|_{\eta=\eta_{max}}- M_1$ to the stable {\it subspace} of $M_1$.  Indeed,  we can find explicitly $\epsilon_1\ll 1$ and $\nu_1 \in \underset{}{ \textrm{Span}}\{X_{11},X_{12},X_{14}\}$, $|\nu_1|=1$ such that
$\pi\big((p,q,r,s)|_{\eta=\eta_{max}}- M_1\big) =\epsilon_1 \nu_1$, where $\pi$ denotes the projection.
At the completion of Step 2 we have the solution $(p,q,r,s)(\eta), \eta\in[-\eta_{max},\eta_{max}]$ at discrete levels $\eta_i, i=0,\dots N$ for $\alpha=0$ and lying in the plane $p\equiv 0$.

{\bf Step 3.} (Continuation with $\alpha\neq 0, \ p=0$) The goal in this step is to create a set of orbits in the plane $p\equiv 0$ but with $\alpha$ not any more trivial. To that effect, we use the well prepared data obtained in Step 2 and we run \emph{AUTO} with $\nu_0$ fixed but allowing $\alpha$, $m$, $\lambda$, $\epsilon_0$, $\epsilon_1$, $\nu_1$ be continued. The continuation process performed by \emph{AUTO} creates a family of orbits with the following characteristics : a) emanate from a small neighbourhood of size $\epsilon_0$ of $M_0$ in the direction of  $\nu_0$, b) terminate in a small neighbourhood of size $\epsilon_1$ of $M_1$, c) lie in the plane $p\equiv 0$ but with $\alpha\neq 0$.

{\bf Step 4.} (Continuation with $\alpha\neq 0, \ p\neq 0$) In this step we capture the desired heteroclinic orbit connecting $M_0$ and $M_1$. From the family of orbits obtained in Step 3 we select one according to the physical relevance of the parameters $\alpha$, $m$, $n$ and $\lambda$. We run \emph{AUTO} again allowing  $\nu_0$ in $\underset{}{ \textrm{span}}\{X_{01},X_{02},X_{03}\}$ to be continued, thus leaving the plane $p\equiv 0$. \emph{AUTO} generates a family of orbits emanating from $M_1$, terminating in a neighbourhood of size $\epsilon_0$ of $M_0$ and is the 2-surface of heteroclinic orbits of $W^u(M_0)\cap W^s(M_1)$. One of these orbits is the desired heteroclinic orbit  with $\nu_0=X_{01}$.

\begin{remark}
We note that the exact solution of $r$ in \eqref{exa0p0} is valid only for values of $n$ of the form $n=\frac{1}{k}, \ k\ge 1, k\in\mathbb{Z}$. In the case that $n$ is not of this form then one can rely on an numerical integrator for solving as accurately as possible the equation for $r$ using the exact value of $q$.
\end{remark}

\subsection{Numerical Results}\label{sec:numres}
In this section we illustrate the computation of the heteroclinic orbit of \eqref{eq:slow}, following the steps described in detail above. Further, using the heteroclinic orbit $(p,q,r,s)$ of system \eqref{eq:slow} we compute the associated self-similar solution in terms of the original variables $v(x,t), \ u(x,t),\ \theta(x,t),\ \sigma(x,t)$.

We begin by giving the explicit relation of the variables $(p(\eta),q(\eta),r(\eta),s(\eta))$ to the original variables $v(x,t), \ u(x,t)$, $\theta(x,t),\ \sigma(x,t)$. Indeed, collecting the transformations and change of variables described in \eqref{eq:ORItoCAP}, \eqref{eq:CAPtoBAR} and \eqref{eq:BARtoTIL}, we obtain

\begin{equation}
\label{eq:pqrsTOvugs}
\begin{aligned}
v(x,t) &= \frac{1}{b} t^b\ \xi^{-b_1}\ p^{\frac{(-\alpha+m+n)}{D}} s^{\frac{\alpha}{D}} r^{\frac{n}{D}}\quad\qquad u(x,t) = t^{b+\lambda}\ \xi^{-b_1-1}\ p^{\frac{(1+\alpha)}{D}} s^{\frac{\alpha}{D}} r^{1+\frac{n}{D}} \\
\theta(x,t) &= t^c\ \xi^{-c_1}\ p^{\frac{(1+m+n)}{D}} s^{\frac{m+n-1}{D}} r^{\frac{2n}{D}},\qquad
\sigma(x,t) = t^d\ \xi^{-d_1}\ p^{\frac{(-\alpha+m+n)}{D}} s^{\frac{\alpha}{D}} r^{\frac{n}{D}}
\end{aligned}
\end{equation}
where $\eta=\log\xi,\ \xi=t^{\lambda} x$ and $a, b,  c,  d,  D$ as in \eqref{eq:exponents}, \eqref{defD}.   We present now the results of  three numerical experiments. All the computations where performed with $\eta_{max}=10$ and $\lambda = \frac{1}{2}\lambda_{max}$ where $\lambda_{max}$ is the upper bound of $\lambda$ in \eqref{eq:lambda-range}. The initial values of $\alpha$, $m$ are taken as $\alpha=0$ and $m=-0.6, \ -0.5, \ -0.5$, respectively.
The corresponding values of $n$ remained fixed throughout the process and were $n=0.025, \ 0.0125, \ 0.01$.
Following the process  described in Section  \ref{sec:contauto}, the software \emph{AUTO} was able to perform the continuation process
and capture the desired heteroclinic orbit. The resulting values for
$\alpha$ and $m$ are shown in the figures, along with the value of the parameter $L_p = -\alpha + m + n$.  The change of sign of $L_p$ from positive to negative signals the onset of localization.

Figures \ref{fig_n40}, \ref{fig_n80} and \ref{fig_n100} illustrate the emergence of
localization by depicting the profiles of the original variables $v,\ u, \ \theta, \ \sigma$ at a few time instances. The vertical axes, except for the velocity $v$, are in logarithmic scale, however the corresponding $y-$range of values for each variable, is the same in all figures.   In part (a) of each figure the velocity profile is depicted,  which eventually, attains the shape of a step function. Parts
(b) and (c) of the figures present the localization in strain rate and temperature respectively. In both cases the initial profile is a small perturbation of a constant state which at later time localizes at the origin. On the other hand, part (d) of figures
shows the collapse of the stress to zero.
The rate of localization at the origin  differs for each numerical experiment and depends on the values of the materials parameters $\alpha, \ m, \ n$ and $L_p$.
In particular, the onset of localization is characterized by the parameter $L_p$ taking a negative value. Further,  the rate of localization is determined by the magnitude of this negative value, with larger negative values indicating faster localization, as it is observed in Figures \ref{fig_n40}-\ref{fig_n100}.

\begin{figure}
\centering\includegraphics[width=14cm, height=10cm]{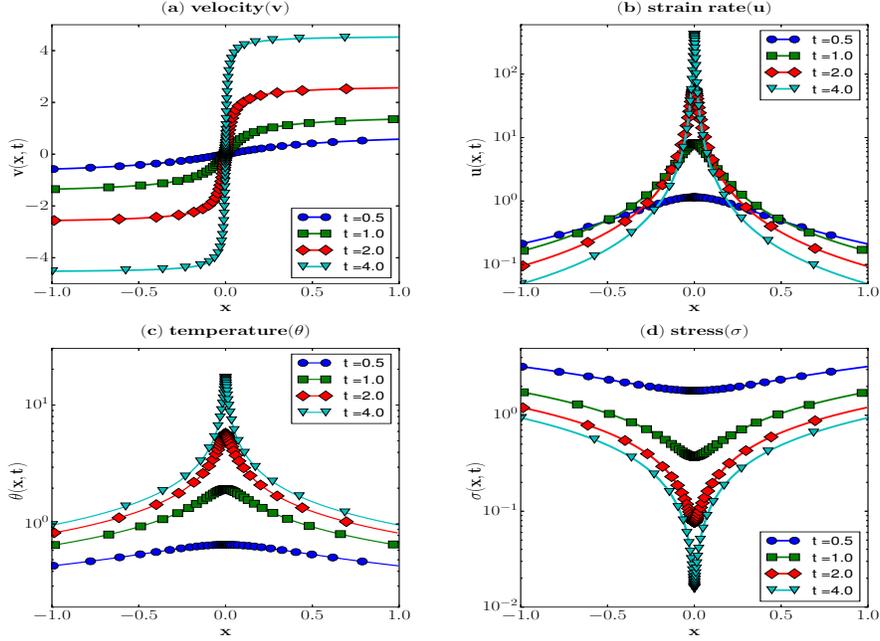}
\caption{$\alpha=1.572, \ m=0.02246, \ n=0.025, \ L_p = -1.52454$.}
\label{fig_n40}
\end{figure}

\begin{figure}
\includegraphics[width=14cm, height=10cm]{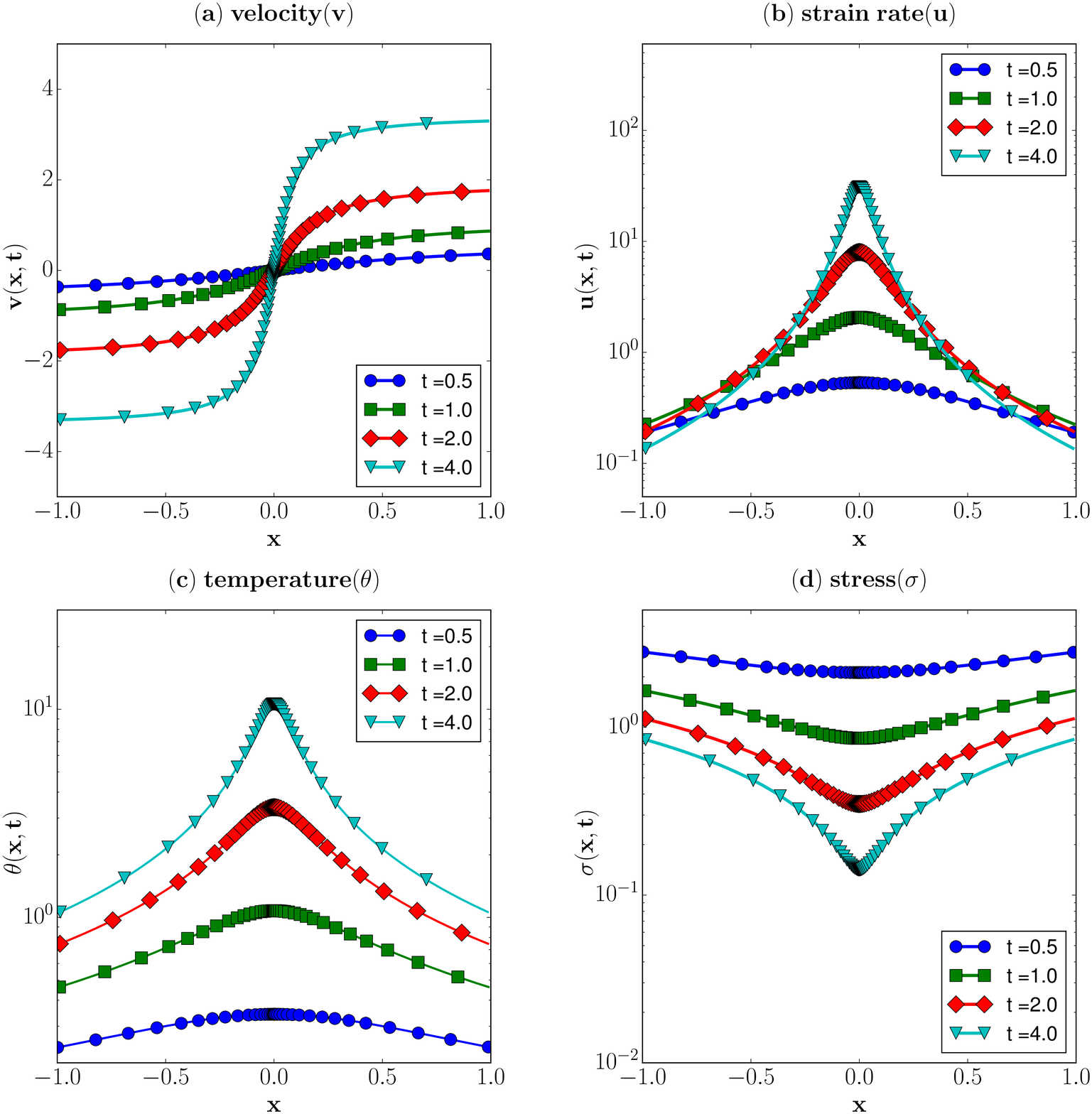}
\centering\caption{$\alpha=1.1698, \ m=0.2057, \ n=0.0125, \ L_p = -0.9516$.}
\label{fig_n80}
\end{figure}

\begin{figure}
\includegraphics[width=14cm, height=10cm]{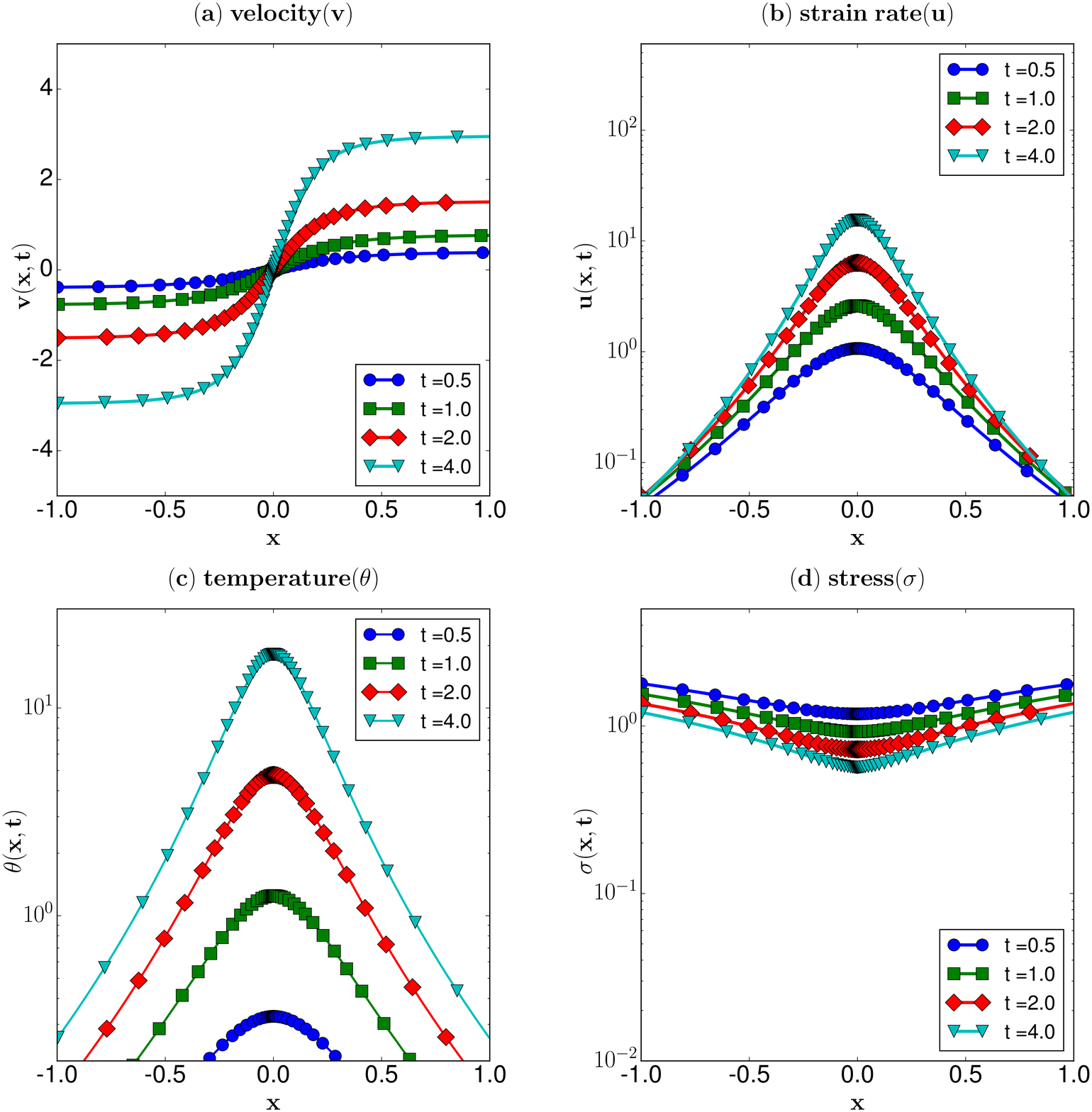}
\centering\caption{$\alpha=0.5957, \ m=0.3437, \ n=0.01, \ L_p = -0.242$.}
\label{fig_n100}
\end{figure}

There is the possibility of constructing the heteroclinic orbits via a shooting method, but this works only in the special cases $\alpha = 0$, $m <0$ and  $m=0$, see \cite{KLT_2016}
and \cite{KLT_HYP2016} respectively. The shooting method does not work in the general case where all parameters are nonzero.

\appendix
\section{The loss of hyperbolicity for $n = 0$} \label{append:hadamard}
Consider the system \eqref{intro-system1} when $n=0$, that is the viscoplastic effects are neglected. Then \eqref{consti} reads
\begin{equation}
\label{constisp}
\sigma = \tau (\theta, \gamma) = \theta^{-\alpha}\gamma^m
\end{equation}
and  \eqref{intro-system1} is written as a first order system
\begin{equation} \label{eq:transport}
 \begin{pmatrix} v_t \\ \theta_t \\ \gamma_t \end{pmatrix} = \underbrace{
 \begin{pmatrix}
 0 & \tau_\theta (\theta, \gamma) & \tau_\gamma (\theta, \gamma) \\
 \tau(\theta, \gamma) & 0 & 0 \\
  1 & 0 & 0 \\
 \end{pmatrix}}_\text{$\triangleq B (\theta, \gamma)$}
  \begin{pmatrix} v_x  \\ \theta_x \\  \gamma_x  \end{pmatrix}.
\end{equation}
We check hyperbolicity for  \eqref{eq:transport}. The characteristic speeds are the roots of
\begin{align*}
 \det\big(B-\lambda \textrm{I}\big) &= -\lambda \big(\lambda^2 - (\tau_\theta \tau + \tau_\gamma )\big)  = 0
 \end{align*}
 The system is thus hyperbolic when $\tau_\theta \tau + \tau_\gamma > 0$ and elliptic in the $t$-direction when $\tau_\theta \tau + \tau_\gamma < 0$.
 Observe that along the evolution of \eqref{eq:transport} and under the conditions for loading of interest in our problem, we have that
 $\gamma$ is increasing; the equation
 $$
 \theta_t = \tau(\theta, \gamma) \gamma_t
 $$
 implies that
 $$
 \tau_\theta \tau + \tau_\gamma = \frac{d}{d\gamma} \tau (\theta, \gamma) \, .
 $$
 We conclude that the system is hyperbolic  before  the maximum of the stress-strain curve,  and elliptic beyond the maximum point.
 For the constitutive law \eqref{constisp} a computation shows
 $$
 \begin{aligned}
 \tau_\theta  \tau + \tau_\gamma &= \theta^{-\alpha } \gamma^{m-1} \big ( -\alpha \frac{\gamma^{m+1}}{\theta^{1 + \alpha}} + m \big )
 \\
 &= \frac{-\alpha + m}{1 + \alpha} + \frac{\alpha (1+m)}{\theta^{1+\alpha}}
 \big (  \frac{\theta_0(x)^{1+\alpha}}{1 + \alpha} - \frac{ \gamma_0 (x)^{1+m}}{1+m} \big )
 \end{aligned}
 $$
In the region $\alpha > m$ the stress-strain curve may be initially increasing (depending on the data)
 but eventually decreases.

The system  \eqref{eq:transport} admits the class of uniform shearing solutions
\begin{equation} \label{app-uss}
v_s (x) = x \, , \quad \gamma_s (t) = t + \gamma_0 \, , \quad \theta_s (t)  \; \mbox{is determined by solving}  \;
\begin{cases} \frac{d\theta_s}{dt} = \tau (\theta_s, \gamma_s) & \\
\theta_s (0) = \theta_0 & \\
\end{cases}
\end{equation}
where $\gamma_0$, $\theta_0 >$ are the initial strain and temperature, respectively. We linearize around the uniform shearing solution by setting
$$
v = x + \hat V(t,x) \, , \quad \theta = \theta_s (t) + \hat\Theta (t,x) \, , \quad \gamma = \gamma_s (t) + \hat\Gamma (t,x)
$$
and obtain the linearized system satisfied by the perturbation $(\hat V, \hat\Theta, \hat\Gamma )$,
\begin{equation}  
 \begin{pmatrix} \hat V_t \\ \hat\Theta_t \\ \hat\Gamma_t \end{pmatrix} =  B (\theta_s(t) , \gamma_s (t) )  \begin{pmatrix} \hat V_x  \\ \hat\Theta_x \\  \hat\Gamma_x  \end{pmatrix}
 +
 \begin{pmatrix}
  0 & 0 & 0 \\
 0 & \tau_\theta (\theta_s , \gamma_s ) & \tau_\gamma (\theta_s , \gamma_s ) \\
 0 & 0 & 0 \\
 \end{pmatrix} \;
  \begin{pmatrix} \hat V  \\ \hat\Theta \\  \hat\Gamma  \end{pmatrix}.
\end{equation}
The above calculation shows that, when $\alpha > m$,  the linearized system
loses hyperbolicity in finite time, past the maximum of the curve $\sigma_s (t) - t$.

\section{The equilibria of the system \eqref{eq:slow} }\label{append:equi_reject}
We discussed in section \ref{sec:equil} the equilibria $M_0$ and $M_1$ of \eqref{eq:slow}.
The remaining equilibria of \eqref{eq:slow} are listed below, and they are  all functions of $(\alpha,m,n,\lambda)$ that
lie outside the the sector
$$
\mathcal{P} = \{(p,q,r,s) \; | \; p\ge0, \,  q\ge0, \,  r>0, \, s>0 \}
$$
in the parameter range \eqref{eq:paramrange}.
The reader will find underlined  the components indicating that the equilibrium lies outside the sector of interest:
We recall  the notations
$$
a=\frac{2+2\alpha-n}{D} + \frac{2(1 + \alpha)}{D}\lambda \, , \quad
b=\frac{1+m}{D} + \frac{1+m+n}{D}\lambda \, , \quad
D=1+2\alpha-m-n
$$
while  $t$, $t_1$ and $t_2$ denote arbitrary real numbers.
\allowdisplaybreaks
\begin{align*}
 \begin{array}{lllllll}
  (1) &\Big(0,&0,& \underline{0},&\underline{0}&\Big), \\
  (2) &\Big(0,&0,& \underline{0},&t&            \Big) &\text{provided } \lambda = \frac{-2\alpha+2m+n}{2(\alpha-m-n)},\\
  (3) &\Big(0,&0,& \frac{n\alpha - a(\alpha-m-n)}{(1+\alpha)(m+n)}, &\underline{0}&\Big),\\
  (4) &\Big(0,&1,& \underline{0},&\underline{0}&\Big), \\
  (5) &\Big(0,&1,& \underline{0},&t&           \Big) &\text{provided } \lambda = \frac{2\alpha-2m-n}{1+m+n},\\
  (6) &\Big(0,&1,& \frac{n\alpha - a(\alpha-m-n)}{(1+\alpha)(m+n)}+\frac{\lambda}{m+n}, &\underline{0}&\Big),\\
  (7) &\Big(t,&0,& \underline{0},&\underline{0}&\Big) &\text{provided } \lambda=\frac{2+2\alpha-n}{2(\alpha-m-n)}, \\
  (8) &\Big(t,&1,& \underline{0},&\underline{0}&\Big) &\text{provided } \lambda=\frac{-2-2\alpha+n}{1+m+n}, \\
  (9) &\Big(t_1,&0,& \underline{0},&t_2&           \Big) &\text{provided $1+2\alpha-m-n=0$ and $\lambda=\frac{-1-m}{1+m+n}$}, \\
 (10) &\Big(t_1,&1,& \underline{0},&t_2&           \Big) &\text{provided $1+2\alpha-m-n=0$ and $\lambda=\frac{-1-m}{1+m+n}$}, \\
 \end{array}
\end{align*}
\begin{align*}
(11)\qquad  \Big(&\underline{ -\tfrac{(\alpha-m-n)(1+m+n)}{(1+\alpha)(1+m)}}, \:\tfrac{2(\alpha-m-n)}{1+m}b, \:\tfrac{2(1+\alpha)}{1+2\alpha-m-n}, \:\tfrac{1+m+n}{1+\alpha} - \tfrac{n(1+2\alpha-m-n)}{2(1+\alpha)^2}\Big),
\\[4pt]
(12)\qquad  \bigg(&\Big ( \tfrac{2\alpha(1+m)}{D(1-m-n)} + \tfrac{2(\alpha-m-n)}{D}\lambda\Big)\Big(\tfrac{2\alpha(1+m)}{D(1-m-n)} - \tfrac{1+m+n}{D}\lambda\Big)\tfrac{1-m-n}{\lambda(2-n)}\tfrac{1-m-n}{\lambda(1+m)}, \\
 &\qquad \left( \tfrac{2\alpha(1+m)}{D(1-m-n)} + \tfrac{2(\alpha-m-n)}{D}\lambda\right)\left(\tfrac{1+m}{D} + \tfrac{1+m+n}{D}\lambda\right)\tfrac{1-m-n}{\lambda(1+m)}, \ \tfrac{2-n}{1-m-n}, \ \underline{0}\bigg).
\end{align*}
The generic equilibria in $\mathcal{P}$ are $M_0$, $M_1$, (1), (3-4), (6), and (11 -12); the rest are valid for specific parameter values.

\section{The linearized problems around $M_0$ and $M_1$}\label{append:lin}
The coefficient matrix for the linearized system \eqref{eq:slow} around the equlibrium $M_0$ is
\begin{align*}
 \begin{pmatrix}
          2 & 0 & 0 & 0 \\
          br_0 & 1 & 0 & 0\\
          \frac{r_0}{n}(\lambda r_0) & \frac{r_0}{n} & \frac{r_0}{n}\Big(\frac{\alpha-m-n}{\lambda(1+\alpha)} - \frac{n\alpha}{\lambda(1+\alpha)r_0}\Big) & \frac{r_0}{n}(\frac{\alpha r_0}{\lambda})\\
          s_0(\lambda r_0) & s_0 & s_0\Big(\frac{\alpha-m-n}{\lambda(1+\alpha)} + \frac{n}{\lambda(1+\alpha)r_0}\Big) & s_0(-\frac{r_0}{\lambda})
         \end{pmatrix}
        =\begin{pmatrix}
          2 & 0 & 0 & 0 \\
          br_0 & 1 & 0 & 0\\
          \frac{r_0}{n}(\lambda r_0) & \frac{r_0}{n} & \frac{r_0}{n}\frac{1}{\lambda}\Big(1-s_0-\frac{n}{r_0}\Big) & \frac{r_0}{n}(\frac{\alpha r_0}{\lambda})\\
          s_0(\lambda r_0) & s_0 & s_0\frac{1}{\lambda}(1-s_0) & s_0(-\frac{r_0}{\lambda})
         \end{pmatrix}
\end{align*}
The corresponding eigenvectors $X_{0j}$ are collected in the matrix $S_0$ as $j$-th column vector, $j=1,2,3,4$.
\begin{equation} \label{eq:S0}
\begin{aligned}
 S_0&=
 \begin{pmatrix}
    1 & 0 & 0 & 0\\
    br_0 & 1 & 0 & 0\\
    y_1 & y_2 & 1 & y_4\\
    z_1 & z_2 & z_3 &1
 \end{pmatrix},
 \quad \quad
 \begin{array}{l}
\begin{pmatrix}
 y_1\\z_1
\end{pmatrix}
=-(\lambda+b)r_0\begin{pmatrix}
  \frac{ \frac{1+\alpha}{\lambda}r_0 + \frac{2}{s_0} }{ \Delta_1 }\\
  \frac{ \frac{n}{r_0}\big(\frac{1}{\lambda} + 2\big) }{ \Delta_1 }
  \end{pmatrix},
  \quad
 \begin{pmatrix}
 y_2\\z_2
\end{pmatrix}
=-\begin{pmatrix}
  \frac{ \frac{1+\alpha}{\lambda}r_0 + \frac{\mu_{02}}{s_0} }{ \Delta_2 }\\
  \frac{ \frac{n}{r_0}\big(\frac{1}{\lambda} + \mu_{02}\big) }{ \Delta_2 }
  \end{pmatrix}\\
 z_3=n\bigg(\frac{\frac{1-s_0}{\lambda}}{\frac{n r_0}{\lambda} + \frac{n\mu_{0}^+}{s_0}}\bigg),\quad y_4=\frac{\frac{r_0}{\lambda}+\frac{\mu_0^-}{s_0}}{\frac{1-s_0}{\lambda}},
 \end{array}
\end{aligned}
\end{equation}
where $\Delta_1 = \frac{1-s_0}{\lambda}\big(\frac{1+\alpha}{\lambda}r_0 + \frac{2}{s_0}\big) -\frac{n}{r_0} \big( \frac{1}{\lambda} + 2\big)\big(\frac{r_0}{\lambda} + \frac{2}{s_0}\big)$
and $\Delta_2 = \frac{1-s_0}{\lambda}\big(\frac{1+\alpha}{\lambda}r_0 + \frac{1}{s_0}\big) -\frac{n}{r_0} \big( \frac{1}{\lambda} + 1\big)\big(\frac{r_0}{\lambda} + \frac{1}{s_0}\big)$.
We find that $y_1,y_2,y_4<0$; $z_1,z_2,z_3 \sim\BO(n)$, provided $n$ is sufficiently small.

Next, the coefficient matrix for the linearized system around $M_1$ is
\begin{align*}
 \begin{pmatrix}
          -\frac{1+m+n}{\alpha-m-n} & 0 & 0 & 0\\
          (b-\lambda)r_1 & -1 & 0 & 0\\
          \frac{r_1}{n}(\lambda r_1) & \frac{r_1}{n} & \frac{r_1}{n}\Big(\frac{\alpha-m-n}{\lambda(1+\alpha)} - \frac{n\alpha}{\lambda(1+\alpha)r_1}\Big) & \frac{r_1}{n}(\frac{\alpha r_1}{\lambda})\\
          s_1(\lambda r_1) & s_1 & s_1\Big(\frac{\alpha-m-n}{\lambda(1+\alpha)} + \frac{n}{\lambda(1+\alpha)r_1}\Big) & s_1(-\frac{r_1}{\lambda})
         \end{pmatrix}
         =\begin{pmatrix}
          -\frac{1+m+n}{\alpha-m-n} & 0 & 0 & 0\\
          (b-\lambda)r_1 & -1 & 0 & 0\\
          \frac{r_1}{n}(\lambda r_1) & \frac{r_1}{n} & \frac{r_1}{n}\frac{1}{\lambda}\Big(1-s_1-\frac{n}{r_1}\Big) & \frac{r_1}{n}(\frac{\alpha r_1}{\lambda})\\
          s_1(\lambda r_1) & s_1 & s_1\frac{1}{\lambda}(1-s_1) & s_1(-\frac{r_1}{\lambda})
         \end{pmatrix}
\end{align*}


In what follows we examine all possible cases:  Except for the case $\mu_{11}=\mu_{12}=-1$, four linearly independent eigenvectors are attained.
In the exceptional case $\mu_{11}=\mu_{12}=-1$ the repeated eigenvalue $-1$ has geometric multiplicity which is strictly less that its  algebraic multiplicity.

As to the eigenvectors, notice that the eigenvalues for $M_1$  (differently from those for $M_0$) have the chance to be repeated. The analysis below
shows  that, unless $\mu_{11}=\mu_{12}=-1$, four linearly independent eigenvectors are attained. If the exceptional case takes place then
we will supplement precisely one generalized eigenvector for the repeated eigenvalue $-1$.

{\bf Case 1. $-\frac{1+m+n}{\alpha-m-n}\ne -1$; or $-\frac{1+m+n}{\alpha-m-n}= -1$ but $b=\lambda$. } This case yields four linearly independent eigenvectors. The eigenvectors $X_{1j}$ are collected in the matrix $S_1$ as $j$-th column vector, $j=1,2,3,4$, and in the case of repeated eigenvalues
the corresponding eigenvectors are understood as a basis for the associated subspace:
\begin{align*}
 S_1&=
 \begin{pmatrix}
    1 & 0 & 0 & 0\\
    x_1 & 1 & 0 & 0\\
    y_1 & y_2 & 1 & y_4\\
    z_1 & z_2 & z_3 &1
 \end{pmatrix}, \quad \quad
 \begin{array}{l}
  x_1=
 \begin{cases}
  \frac{(b-\lambda)r_1}{1+\mu_{11}} & \text{if $\mu_{11}\ne -1$,}\\
  0 & \text{otherwise,}
 \end{cases}\\
 z_3=n\bigg(\frac{\frac{1-s_1}{\lambda}}{\frac{n r_1}{\lambda} + \frac{n\mu_{1}^+}{s_1}}\bigg), \quad y_4=\frac{\frac{r_1}{\lambda}+\frac{\mu_1^-}{s_1}}{\frac{1-s_1}{\lambda}},\\
 \end{array}
\end{align*}
\begin{equation} \label{eq:S1-1}
\begin{aligned}
\begin{pmatrix}
 y_1\\z_1
\end{pmatrix}
=\begin{cases}
  -(\lambda r_1 + x_1)\begin{pmatrix}
  \frac{\lambda}{1-s_1}\\0
  \end{pmatrix} & \text{if $\mu_{14}=\mu_{11}$,}\\
  -(\lambda r_1 + x_1)
  \begin{pmatrix}
  \frac{ \frac{1+\alpha}{\lambda}r_1 + \frac{\mu_{11}}{s_1} }{ \Delta_3 }\\
  \frac{ \frac{n}{r_1}\big(\frac{1}{\lambda} + \mu_{11}\big) }{ \Delta_3 }
  \end{pmatrix} & \text{otherwise,}
 \end{cases}
 \quad
 \begin{pmatrix}
 y_2\\z_2
\end{pmatrix}
=\begin{cases}
 -\begin{pmatrix}
  \frac{\lambda}{1-s_1}\\0
  \end{pmatrix} & \text{if $\mu_{14}=\mu_{12}$,}\\
  -\begin{pmatrix}
  \frac{ \frac{1+\alpha}{\lambda}r_1 + \frac{\mu_{12}}{s_1} }{ \Delta_4 }\\
  \frac{ \frac{n}{r_1}\big(\frac{1}{\lambda} + \mu_{12}\big) }{ \Delta_4 }
  \end{pmatrix} & \text{otherwise,}
 \end{cases}
\end{aligned}
\end{equation}
where
\begin{align*}
 \Delta_3 &= \frac{1-s_1}{\lambda}\big(\frac{1+\alpha}{\lambda}r_1 + \frac{\mu_{11}}{s_1}\big) -\frac{n}{r_1} \big( \frac{1}{\lambda} + \mu_{11}\big)\big(\frac{r_1}{\lambda} + \frac{\mu_{11}}{s_1}\big)
 \\
 &=\frac{-n}{r_1s_1}\det \left[\begin{pmatrix} \frac{r_1}{n}\big(\frac{1-s_1}{\lambda}-\frac{n}{\lambda r_1}\big) & \frac{r_1}{n}\frac{\alpha r_1}{\lambda}\\ s_1\frac{1-s_1}{\lambda} & -s_1\frac{r_1}{\lambda} \end{pmatrix} -\mu_{11}\textrm{I}\right]\ne0,
 \\
 \Delta_4 &= \frac{1-s_1}{\lambda}\big(\frac{1+\alpha}{\lambda}r_1 + \frac{\mu_{12}}{s_1}\big) -\frac{n}{r_1} \big( \frac{1}{\lambda} + \mu_{12}\big)\big(\frac{r_1}{\lambda} + \frac{\mu_{12}}{s_1}\big)
 \\
 &=\frac{-n}{r_1s_1}\det \left[\begin{pmatrix} \frac{r_1}{n}\big(\frac{1-s_1}{\lambda}-\frac{n}{\lambda r_1}\big) & \frac{r_1}{n}\frac{\alpha r_1}{\lambda}\\ s_1\frac{1-s_1}{\lambda} & -s_1\frac{r_1}{\lambda} \end{pmatrix} -\mu_{12}\textrm{I}\right]\ne0
\end{align*}
respectively for the corresponding cases.

{\bf Case 2. $-\frac{1+m+n}{\alpha-m-n}= -1$ and $b\ne\lambda$: }
For this case $\mu_{11} = \mu_{12} = -1$ has algebraic multiplicity two but its geometric multiplicity is one, so we replace the first column of $S_1$ by the generalized eigenvector
$\big(\frac{1}{(b-\lambda)r_1}, 0, y_1', z_1'\big)^T$, where
\begin{equation} \label{eq:S1-2}
\begin{aligned}
\begin{pmatrix}
 y_1'\\z_1'
\end{pmatrix}
=\begin{cases}
  \begin{pmatrix}
  -\frac{\lambda}{1-s_1}\big(\frac{\lambda}{b-\lambda} -\frac{n}{r_1}z_2\big)\\0
  \end{pmatrix} & \text{if $\mu_{14}=-1$,}\\
  -\frac{\lambda}{b-\lambda}
  \begin{pmatrix}
  \frac{ \frac{1+\alpha}{\lambda}r_1 + \frac{\mu_{11}}{s_1} }{ \Delta_3 }\\
  \frac{ \frac{n}{r_1}\big(\frac{1}{\lambda} + \mu_{11}\big) }{ \Delta_3 }
  \end{pmatrix} +
  \frac{n}{r_1}
  \begin{pmatrix}
  \frac{ y_2\big(\frac{r_1}{\lambda} + \frac{\mu_{11}}{s_1}\big) + z_2\frac{\alpha r_1}{\lambda} }{ \Delta_3 }\\
  \frac{ y_2\big(\frac{1-s_1}{\lambda}\big) + z_2\big(-\frac{1-s_1}{\lambda}+\frac{n}{r_1}\big(\frac{1}{\lambda}+\mu_{11}\big)\big) }{ \Delta_3 }
  \end{pmatrix} & \text{otherwise.}
 \end{cases}
\end{aligned}
\end{equation}

\begin{align*}
 0&=Mat_1 \begin{pmatrix} w\\x\\y\\z \end{pmatrix} -\mu \begin{pmatrix} w\\x\\y\\z \end{pmatrix}=
\begin{pmatrix}
 (\mu_{11}-\mu) w\\
 (b-\lambda)r_1 w +(\mu_{12}-\mu)x\\
 \frac{r_1}{n} \left[\lambda r_1 w + x + \big(\frac{1-s_1}{\lambda} - \frac{n}{r_1}\big(\frac{1}{\lambda}+\mu\big)\big)y + \frac{\alpha r_1}{\lambda}z\right]\\
 s_1 \left[ \lambda r_1 w + x + \big(\frac{1-s_1}{\lambda}\big)y -\big(\frac{r_1}{\lambda}+\frac{\mu}{s_1}\big)z\right]
 \end{pmatrix},\\
 A&\triangleq
 \begin{pmatrix}
 \frac{1-s_1}{\lambda} - \frac{n}{r_1}\big(\frac{1}{\lambda}+\mu\big) & \frac{\alpha r_1}{\lambda}\\
 \frac{1-s_1}{\lambda} & -\big(\frac{r_1}{\lambda}+\frac{\mu}{s_1}\big)
 \end{pmatrix}
 \begin{pmatrix} y\\z \end{pmatrix} = -(\lambda r_1 w +x)\begin{pmatrix} 1\\1\end{pmatrix}\\
 A^{-1} &= \frac{1}{\Delta}
 \begin{pmatrix} \big(\frac{r_1}{\lambda}+\frac{\mu}{s_1}\big) & \frac{\alpha r_1}{\lambda}\\
 \frac{1-s_1}{\lambda} & -\frac{1-s_1}{\lambda} + \frac{n}{r_1}\big(\frac{1}{\lambda}+\mu\big)
\end{pmatrix}, \quad
\Delta=\frac{1-s_1}{\lambda}\big( \frac{1+\alpha}{\lambda}r_1 + \frac{\mu}{s_1}\big) - \frac{n}{r_1}\big(\frac{1}{\lambda} +\mu\big)\big(\frac{r_1}{\lambda}+\frac{\mu}{s_1}\big)
\end{align*}

%

\bigskip
\noindent
{\bf Acknowledgement.}
The authors thank {\sc Prof.~Peter Szmolyan} for valuable discussions on the use of geometric singular perturbation theory.

\end{document}